\newtheorem{theorem}{Theorem}
\newtheorem{lemma}[theorem]{Lemma}
\newtheorem{proposition}[theorem]{Proposition}
\newtheorem{corollary}[theorem]{Corollary}
\newtheorem{remark}[theorem]{Remark}
\numberwithin{theorem}{section}
\numberwithin{equation}{section}
\begin{document}

\makeatletter
\def\Ddots{\mathinner{\mkern1mu\raise\p@
\vbox{\kern7\p@\hbox{.}}\mkern2mu
\raise4\p@\hbox{.}\mkern2mu\raise7\p@\hbox{.}\mkern1mu}}
\makeatother

\newcommand{\OP}[1]{\operatorname{#1}}
\newcommand{\GO}{\OP{GO}}
\newcommand{\leftexp}[2]{{\vphantom{#2}}^{#1}{#2}}
\newcommand{\leftsub}[2]{{\vphantom{#2}}_{#1}{#2}}
\newcommand{\rightexp}[2]{{{#1}}^{#2}}
\newcommand{\rightsub}[2]{{{#1}}_{#2}}
\newcommand{\AI}{\OP{AI}}
\newcommand{\gen}{\OP{gen}}
\newcommand{\prim}{\OP{\star prim}}
\newcommand{\Image}{\OP{Im}}
\newcommand{\Spec}{\OP{Spec}}
\newcommand{\Ad}{\OP{Ad}}
\newcommand{\tr}{\OP{tr}}
\newcommand{\spec}{\OP{spec}}
\newcommand{\scopy}{\OP{end}}
\newcommand{\ord}{\OP{ord}}
\newcommand{\Cent}{\OP{Cent}}
\newcommand{\wellip}{\OP{w-ell}}
\newcommand{\Nrd}{\OP{Nrd}}
\newcommand{\Res}{\OP{Res}}
\newcommand{\alg}{\OP{alg}}
\newcommand{\BC}{\OP{BC}}
\newcommand{\sgn}{\OP{sgn}}
\newcommand{\SU}{\OP{SU}}
\newcommand{\Hom}{\OP{Hom}}
\newcommand{\Inter}{\OP{Int}}
\newcommand{\diag}{\OP{diag}}
\newcommand{\Sym}{\OP{Sym}}
\newcommand{\GSp}{\OP{GSp}}
\newcommand{\GL}{\OP{GL}}
\newcommand{\GSO}{\OP{GSO}}
\newcommand{\height}{\OP{ht}}
\newcommand{\art}{\OP{art}}
\newcommand{\vol}{\OP{vol}}
\newcommand{\cusp}{\OP{cusp,\tau}}
\newcommand{\un}{\OP{un}}
\newcommand{\disci}{\OP{disc,\tau_{\it i}}}
\newcommand{\cuspi}{\OP{cusp,\tau_{\it i}}}
\newcommand{\ellip}{\OP{ell}}
\newcommand{\sph}{\OP{sph}}
\newcommand{\gsimp}{\OP{sim-gen}}
\newcommand{\Aut}{\OP{Aut}}
\newcommand{\disc}{\OP{disc,\tau}}
\newcommand{\sdisc}{\OP{s-disc}}
\newcommand{\aut}{\OP{aut}}
\newcommand{\End}{\OP{End}}
\newcommand{\barQ}{\OP{\overline{\mathbf{Q}}}}
\newcommand{\barQp}{\OP{\overline{\mathbf{Q}}_{\it p}}}
\newcommand{\Gal}{\OP{Gal}}
\newcommand{\PGL}{\OP{PGL}}
\newcommand{\simp}{\OP{sim}}
\newcommand{\pri}{\OP{prim}}
\newcommand{\Normal}{\OP{Norm}}
\newcommand{\Ind}{\OP{Ind}}
\newcommand{\St}{\OP{St}}
\newcommand{\unit}{\OP{unit}}
\newcommand{\reg}{\OP{reg}}
\newcommand{\SL}{\OP{SL}}
\newcommand{\Frob}{\OP{Frob}}
\newcommand{\Id}{\OP{Id}}
\newcommand{\GSpin}{\OP{GSpin}}
\newcommand{\Norm}{\OP{Norm}}

\title[On a theorem of Bertolini-Darmon]{On a theorem of Bertolini-Darmon about rationality of Stark-Heegner points over genus fields of real quadratic fields}
\author{Chung Pang Mok}

\address{Department of Mathematics, Purdue University, 150 N. University Street, West Lafayette, IN 47907-2067}

\email{mokc@purdue.edu}

\maketitle

\begin{abstract}
In this paper, we remove certain hypothesis in the theorem of Bertolini-Darmon on the rationality of Stark-Heegner points over narrow genus class fields of real quadratic fields. Along the way, we establish that certain normalized special values of $L$-functions are squares of rational numbers, a result that is of independent interest, and can be regarded as instances of the rank zero case of the Birch and Swinnerton-dyer conjecture modulo squares.
\end{abstract}

\section{Introduction}

Let $E/\mathbf{Q}$ be an elliptic curve over $\mathbf{Q}$ whose conductor we denote as $N$, and $f_E$ being the weight two cuspidal eigenform of level $N$ that is attached to $E/\mathbf{Q}$. We assume that $E$ has multiplicative reduction at an odd prime $p \| N $, which will be fixed in this Introduction section. Put $M:=N/p$. Let $\chi_1,\chi_2$ be a pair of quadratic (or trivial) Dirichlet characters, corresponding to quadratic (or trivial) field extensions of $\mathbf{Q}$ with discriminants $D_1,D_2$ respectively. Assume that $D_1,D_2$ are relatively prime, and that $D_1,D_2$ are relatively prime to $N$. Put $\epsilon := \chi_1 \cdot \chi_2$, which is assumed to be non-trivial. Denote by $K$ the quadratic field extension of $\mathbf{Q}$ that corresponds to the Dirichlet character $\epsilon$, which has discriminant equal to $D:=D_1 \cdot D_2$. The pair $(\chi_1,\chi_2)$ defines a narrow genus class character $\chi$ of $K$, and corresponds to the quadratic (or trivial) extension $H_{\chi} = \mathbf{Q}(\sqrt{D_1},\sqrt{D_2})$ of $K$ that is unramified at all the finite primes of $K$, namely the narrow genus class field of $K$ corresponding to the pair $(\chi_1,\chi_2)$. We also regard the narrow genus class character $\chi$ as a character of $\Gal(H_{\chi}/K)$. 

\bigskip

Now in addition, we assume that $K$ is real quadratic, and satisfies the modified Heegner hypothesis:  
\begin{itemize}
\item The prime $p$ is inert in $K$.

\item All primes dividing $M$ splits in $K$. 
\end{itemize}

Denote by $K_p$ the completion of $K$ at the prime ideal $p \mathcal{O}_K$. Note that the prime ideal $p \mathcal{O}_K$ splits in $H_{\chi}$. By fixing a prime of $H_{\chi}$ that lies over $p\mathcal{O}_K$, we can embed $H_{\chi}$ into the completion $K_p$.

\bigskip

As a special case of the theory in [D], one can give a $p$-adic analytic construction of certain points $P_{\chi} \in E(K_p)$ associated to the data $f_E$ and $\chi$. The point $P_{\chi}$ is a certain linear combination (weighted by the values taken by $\chi$) of Stark-Heegner points that were defined in {\it loc. cit.} It is conjectured that, up to non-zero integer multiples, the point $P_{\chi}$ lies in $E(H_{\chi})$, and also that the $P_{\chi}$ is non-torsion if and only if $L^{\prime}(1,E/K,\chi) \neq 0$; here $L(s,E/K,\chi)$ is the complex Rankin-Selberg $L$-function attached to the pair $f_E$ and $\chi$. 

\bigskip

In [BD1], this was established under certain hypotheses. To state their results, we first need some more setup.

\bigskip
The condition that $K$ is real quadratic, together with the modified Heegner hypothesis, implies that $\epsilon(-M)=1$, and hence that $\chi_1(-M)=\chi_2(-M)$. Finally, denote by $w_M$ the sign of the Atkin-Lehner involution at $M$ acting on $f_E$.  

\bigskip

The main theorem of [BD1] could be stated as follows:

\begin{theorem} (Theorem 4.3 of [BD1])
Assume in addition the following conditions hold:
\begin{itemize}
\item $E/\mathbf{Q}$ has multiplicative reduction at some prime other than $p$. 
\item $\chi_1(-M)=-w_M$.
\end{itemize}
Then there is a global point $\mathbf{P}_{\chi} \in (E(H_{\chi}) \otimes \mathbf{Q})^{\chi}$, and $t \in \mathbf{Q}^{\times}$, such that $P_{\chi} = t \cdot \mathbf{P}_{\chi}$ as elements in $E(K_p) \otimes \mathbf{Q}$. Furthermore, $P_{\chi}$ is non-torsion if and only if $L^{\prime}(1,E/K,\chi) \neq 0$.
\end{theorem}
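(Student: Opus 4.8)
My plan is to relate the $p$-adically constructed point $P_\chi$ to special values of $L$-functions through a Hida family, and then to recognise those values geometrically. Since $\chi$ is the narrow genus character attached to $(\chi_1,\chi_2)$, Artin formalism factors the Rankin--Selberg $L$-function,
\[
L(s,E/K,\chi)=L(s,E,\chi_1)\,L(s,E,\chi_2).
\]
Using the modified Heegner hypothesis (so that $\epsilon(p)=-1$ and $\epsilon(q)=1$ for $q\mid M$), the product of the root numbers of the two factors is $-1$; hence $L(s,E/K,\chi)$ has sign $-1$ and $L'(1,E/K,\chi)$ is its leading term at $s=1$. The extra hypothesis $\chi_1(-M)=-w_M$ pins down the individual root numbers, forcing exactly one factor, say $L(s,E,\chi_1)$, to have sign $-1$ and $L(s,E,\chi_2)$ to have sign $+1$. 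Since then $L(1,E,\chi_1)=0$, the factorisation yields
\[
L'(1,E/K,\chi)=L'(1,E,\chi_1)\,L(1,E,\chi_2),
\]
decoupling the problem into a ``rank one'' part carried by $\chi_1$ and a ``rank zero'' part carried by $\chi_2$.

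\textbf{The analytic avatar of $P_\chi$.} I would interpolate $f_E$ in a Hida family $\mathbf{f}$ through its ordinary $p$-stabilisation and form the two-variable Mazur--Kitagawa $p$-adic $L$-functions twisted by $\chi_1$ and by $\chi_2$. Because $p \| N$, the weight-two central specialisation carries an exceptional (trivial) zero, and a Greenberg--Stevens computation relates the weight-direction derivative along the central line $s=k/2$ to the $\mathcal{L}$-invariant $\mathcal{L}_p(E)=\log_p(q_E)/\mathrm{ord}_p(q_E)$ of the Tate parameter $q_E$ of $E$ at $p$. Realising $\log_E(P_\chi)$ through the Tate uniformisation $E(K_p)\cong K_p^{\times}/q_E^{\mathbf{Z}}$, I expect to match the $p$-adic construction of the Stark--Heegner point, up to an explicit element of $\mathbf{Q}^{\times}$, with the leading term of the Mazur--Kitagawa $p$-adic $L$-function along that line, schematically
\[
\log_E(P_\chi)^2 \;\doteq\; \frac{d}{dk}L_p(\mathbf{f},\chi_1,k,k/2)\Big|_{k=2}\cdot L_p(\mathbf{f},\chi_2,2,1),
\]
the first factor coming from the exceptional zero of the $\chi_1$-component and the second from the value of the $\chi_2$-component.

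\textbf{The geometric side.} The next step reinterprets the two factors above as global Heegner data. Here the hypothesis that $E$ has multiplicative reduction at a prime $\ell\neq p$ is essential: it furnishes an auxiliary quaternion algebra together with a level-raising congruence, and it forces the residual representation to be irreducible, so that a Jochnowitz-type congruence comparing the Mazur--Kitagawa $p$-adic $L$-function with a $p$-adic $L$-function built from Gross/Heegner points becomes available. Through this comparison the weight-derivative $\frac{d}{dk}L_p(\mathbf{f},\chi_1,k,k/2)|_{k=2}$ is identified with the formal-group logarithm of a global point attached to $\chi_1$, whose canonical height is $L'(1,E,\chi_1)$ up to the period by the complex Gross--Zagier--Zhang formula, while $L_p(\mathbf{f},\chi_2,2,1)$ recovers $L(1,E,\chi_2)$ up to its period. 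Crucially, under the present hypotheses the rank-zero value $L(1,E,\chi_2)$ is, up to its period, a perfect square of a rational number -- an instance of rank-zero Birch--Swinnerton-Dyer modulo squares.

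\textbf{Conclusion and main difficulty.} Combining the two sides, $\log_E(P_\chi)^2$ becomes a \emph{square} rational multiple of $\log_E(\mathbf{P}_\chi)^2$ for a global point $\mathbf{P}_\chi\in(E(H_\chi)\otimes\mathbf{Q})^{\chi}$ assembled from the Heegner point and the character $\chi_2$; extracting the square root produces $t\in\mathbf{Q}^{\times}$ with $P_\chi=t\cdot\mathbf{P}_\chi$ in $E(K_p)\otimes\mathbf{Q}$. The non-torsion criterion is then immediate: $P_\chi$ is non-torsion if and only if $\log_E(\mathbf{P}_\chi)\neq 0$, if and only if $L'(1,E,\chi_1)\,L(1,E,\chi_2)=L'(1,E/K,\chi)\neq 0$. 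I expect the principal obstacle to be the comparison in the previous step -- the Jochnowitz congruence identifying the analytic Mazur--Kitagawa object with the geometric Heegner object, and the control of Iwasawa invariants it demands -- together with the square-class subtlety of descending from $\log_E(P_\chi)^2$ to $\log_E(P_\chi)$; it is exactly here that the hypotheses on $\ell$ and on the sign $\chi_1(-M)=-w_M$ are used in an essential way.
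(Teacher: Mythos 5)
Your outline does track the route actually taken: the paper under review does not reprove this statement but quotes it from [BD1], where it is established exactly as you describe in broad strokes --- the genus-character factorization of the Mazur--Kitagawa two-variable $p$-adic $L$-function, a derivative formula along the central line $s=k/2$ identifying the $\chi$-part with $\log_E(P_\chi)^2$, and Theorem 5.4 of [BD2] (recalled as Theorem 2.1 here) as the crucial input. But there is a genuine gap at your decisive step. You assert that the normalized rank-zero value $L(1,E,\chi_2)$ ``is, up to its period, a perfect square of a rational number --- an instance of rank-zero Birch--Swinnerton-Dyer modulo squares.'' That is not a citable theorem; it is conjectural, and proving exactly this kind of square statement (in base-changed form, under extra nonvanishing hypotheses, and only up to a factor of $2$) is the entire technical content of the present paper (Theorem 3.2, via the Gross--Zagier and Zhang formulas combined with Ribet--Takahashi/Takahashi/Deines degree computations), so invoking it is circular relative to this whole line of work. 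The argument of [BD1] never needs any individual $L$-value to be a square: the final bullet of Theorem 2.1 says that the constant $\ell$ in $\frac{d^2}{dk^2}L_p(k/2,k,f_{\infty},\chi_1)\big|_{k=2}=\ell\,(\log_E\mathbf{P}_{\chi_1})^2$ lies, modulo $(\mathbf{Q}^{\times})^2$, in the square class of $L^{\mathrm{alg}}(1,E,\chi_2')$ for \emph{any} admissible $\chi_2'$; since the genus companion $\chi_2$ itself satisfies conditions (a)--(c), one takes $\chi_2'=\chi_2$ whenever $L(1,E,\chi_2)\neq 0$, and then $\ell\cdot L^{\mathrm{alg}}(1,E,\chi_2)$ is a square because the two square classes cancel. (If $L(1,E,\chi_2)=0$, the factorization forces $\log_E(P_\chi)=0$, whence $P_\chi=0$ in $E(K_p)\otimes\mathbf{Q}$ by injectivity of $\log_E$ there, and one takes $\mathbf{P}_\chi=0$.) Without this cancellation your square-root extraction of $t$ has no basis.

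Two further inaccuracies are worth flagging. First, since the $\chi_1$-component $k\mapsto L_p(k/2,k,f_{\infty},\chi_1)$ vanishes to order at least $2$ at $k=2$ (sign $-1$ plus the exceptional zero), the first derivative $\frac{d}{dk}$ appearing in your schematic identity is identically zero; the correct quantity throughout is the second derivative. Second, the mechanism behind Theorem 2.1 is not a Jochnowitz-type congruence requiring residual irreducibility: the hypothesis that $E$ is multiplicative at some prime $\ell\neq p$ is used so that the \emph{indefinite} quaternion algebra of discriminant $p\ell$ has its discriminant dividing $N$, and the associated Shimura curve admits Cerednik--Drinfeld $p$-adic uniformization, which is what ties the weight-direction derivative of the Hida family to a global Heegner point. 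It is precisely this hypothesis that the present paper removes, following [M1], by base change to a real quadratic field $F$ in which $p$ is inert: there $[F:\mathbf{Q}]$ is even, so the quaternion algebra ramified only at $\mathfrak{p}=p\mathcal{O}_F$ and one archimedean place exists without any auxiliary finite prime.
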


\bigskip

The main theorem of this paper is as follows:
\begin{theorem}
Theorem 1.1 holds, without having to assume that $E$ has multiplicative reduction at some prime other than $p$. 
\end{theorem}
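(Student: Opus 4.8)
The plan is to pin down exactly where the hypothesis ``$E$ has multiplicative reduction at a second prime'' is actually used in the argument of [BD1], and to replace that single input by an unconditional one. In [BD1] the rationality of $P_\chi$ is extracted from a factorization of a two-variable $p$-adic $L$-function attached to the Hida family through $f_E$. Under the retained sign condition $\chi_1(-M)=-w_M$, the character $\chi_1$ supplies the factor of sign $-1$, governing the first derivative and hence the position of the Stark--Heegner point, while $\chi_2$ supplies the factor of sign $+1$, governing the central value $L(1,E^{\chi_2}/\mathbf{Q})=L(1,E,\chi_2)$ of the quadratic twist. Comparing the $p$-adically constructed $P_\chi$ with a genuine global object produces, heuristically, a relation of the shape $\log_{E,p}(P_\chi)^2 \sim L_p'(\chi_1)\cdot L^{\mathrm{alg}}(1,E^{\chi_2}/\mathbf{Q})$, so the descent to a rational multiple of a global point succeeds precisely when the normalized central value $L^{\mathrm{alg}}(1,E^{\chi_2}/\mathbf{Q})$ is the square of a rational number. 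In [BD1] this squareness is obtained through a Gross--Waldspurger central-value formula realized on a Shimura curve attached to a quaternion algebra whose ramification set was forced to contain a second finite prime of multiplicative reduction; it is solely to guarantee that this particular algebra exists that the extra hypothesis is imposed.

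First I would make this localization precise, checking that the second-prime hypothesis intervenes in [BD1] \emph{only} in the proof that $L^{\mathrm{alg}}(1,E^{\chi_2}/\mathbf{Q})$ is a square, and nowhere else in the rationality argument, so that Theorem 1.2 reduces to an unconditional squareness statement for the central value of the twist. If $L(1,E^{\chi_2}/\mathbf{Q})=0$ then $L'(1,E/K,\chi)=0$ and $P_\chi$ is torsion, so the conclusion is vacuous; thus assume $L(1,E^{\chi_2}/\mathbf{Q})\neq 0$. By the theorem of Kolyvagin (building on Gross--Zagier) or by Kato's Euler system, $E^{\chi_2}(\mathbf{Q})$ is then finite and $\mathrm{Sha}(E^{\chi_2}/\mathbf{Q})$ is finite. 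This is the rank-zero input advertised in the abstract, and the target becomes the Birch--Swinnerton-Dyer identity modulo squares
\[ L^{\mathrm{alg}}(1,E^{\chi_2}/\mathbf{Q}) \equiv \#\mathrm{Sha}(E^{\chi_2}/\mathbf{Q})\cdot \prod_\ell c_\ell(E^{\chi_2}) \pmod{(\mathbf{Q}^\times)^2}, \]
read as an equality of square classes, in which the torsion contribution is already a square and so drops out.

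The mechanism that removes the hypothesis is that the squareness can be obtained unconditionally from a central-value formula of Gross--Waldspurger type using the \emph{definite} quaternion algebra $B$ ramified exactly at the set of places where the local root number of $E^{\chi_2}$ is $-1$. Since the global sign is $+1$, this set has even cardinality, so $B$ exists with no assumption whatever on a second multiplicative prime: the constraint in [BD1] was an artifact of their passage through an indefinite algebra with $p$-adic uniformization. The formula then expresses $L^{\mathrm{alg}}(1,E^{\chi_2}/\mathbf{Q})$ as the square of a toric period times an explicit product of local factors; on the arithmetic side this is consistent with the displayed identity, because $\#\mathrm{Sha}(E^{\chi_2}/\mathbf{Q})$ is a perfect square by the Cassels--Tate pairing, which is alternating and non-degenerate for an elliptic curve with finite $\mathrm{Sha}$. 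What remains is to verify that the residual local factors, together with the ratio between the period entering $L^{\mathrm{alg}}$ and the N\'eron period of $E^{\chi_2}$ and the product of Tamagawa numbers $\prod_\ell c_\ell(E^{\chi_2})$, assemble into a trivial square class; here I would exploit the modified Heegner hypothesis, namely that $p$ is inert in $K$, that the primes dividing $M$ split, and that $D_1,D_2$ are coprime to $N$.

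I expect the main obstacle to be exactly this local bookkeeping rather than the conceptual squareness of $\mathrm{Sha}$. The delicate points are: matching the ramification set of $B$ to the local root numbers of $E^{\chi_2}$ at $p$, at the split primes dividing $M$, and at the ramified primes of $\chi_2$; constructing the local test vectors (Gross points) so that the central-value formula applies and is nonvanishing in this generality; and proving that the explicit local constant, the period-comparison factor, and $\prod_\ell c_\ell(E^{\chi_2})$ together lie in $(\mathbf{Q}^\times)^2$, with due care at the prime $2$ and at any primes of additive reduction. Once this is carried out, the squareness of $L^{\mathrm{alg}}(1,E^{\chi_2}/\mathbf{Q})$ holds unconditionally, and feeding it back into the factorization argument of [BD1] in place of its Shimura-curve input yields Theorem 1.2.
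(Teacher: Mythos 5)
Your localization of where the extra hypothesis enters [BD1] is not correct, and this sends your whole argument at the wrong target. The second multiplicative prime is needed in [BD1] because the crucial input is the derivative formula of [BD2] (Theorem 5.4 there, Theorem 2.1 here), whose \emph{proof} runs through Cerednik--Drinfeld $p$-adic uniformization of a Shimura curve attached to an indefinite quaternion algebra over $\mathbf{Q}$ ramified at $p$ and at a second finite prime of multiplicative reduction; the hypothesis guarantees that $f_E$ transfers to such a curve, so it is structural to the entire formula, not merely to a squareness lemma. Moreover, the quantity whose square class matters is the constant $\ell$ in that formula: what [BD1] uses is that $\ell \equiv L^{\alg}(1,E/\mathbf{Q},\chi_2) \bmod (\mathbf{Q}^{\times})^2$, so that the product $\ell\cdot L^{\alg}(1,E/\mathbf{Q},\chi_2)$ multiplying $\log_E^2$ of a global point is a square; neither factor is a square in general, and no step of [BD1] requires $L^{\alg}(1,E/\mathbf{Q},\chi_2)$ itself to be one. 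The paper instead replaces [BD2] by [M1], where the derivative formula is proved without the hypothesis by base change to a real quadratic $F$ in which $p$ is inert (over $F$ the relevant quaternion algebra, ramified at $p\mathcal{O}_F$ and one archimedean place, exists with no auxiliary prime), and the problem then becomes showing that the discrepancy between the constant $\ell''$ of [M1] and the constant $\ell$ of [BD2] --- namely the quantity (2.2), i.e. $2 D_F^{1/2}\mathcal{N}_{F/\mathbf{Q}}\mathfrak{c}_{\delta}\, L(1,E/F,\delta)\,/\,\bigl(\tau(\delta)(\Omega_{f_E}^{w})^2\bigr)$ for a suitable quadratic Hecke character $\delta$ of $F$ --- is a square (Theorem 3.2). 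Your target statement, squareness of $L^{\alg}(1,E/\mathbf{Q},\chi_2)$, is thus both different from and insufficient for what is needed.

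The mechanism you propose also does not go through. Rank-zero BSD modulo squares is not a theorem: Kolyvagin and Kato give finiteness of the Mordell--Weil group and of $\mbox{III}$, and Cassels--Tate makes $\#\mbox{III}$ a square, but the equality of $L^{\alg}$ with the arithmetic side, even modulo squares, is exactly what is open; the paper invokes BSD only as motivation, and even conjecturally only up to a factor of $2$ (see the discussion following Remark 3.3). Nor does a Gross--Waldspurger central value formula by itself deliver squareness: with rational normalization it gives $L^{\alg} \equiv \langle \phi_E,\phi_E\rangle \bmod (\mathbf{Q}^{\times})^2$, and the square class of the Petersson norm of the quaternionic eigenform is precisely the nontrivial invariant --- identifying it, together with the Tamagawa-type factors $\overline{c}_q$, with the degree of a modular parametrization is the content of Ribet--Takahashi and Takahashi, and extending this to Shimura curves over totally real fields is the genuinely new work of section 4 (Lemma 3.4), resting on [De], the Cerednik--Drinfeld and Carayol models, and Grothendieck's monodromy pairing. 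Worse, in the actual situation over $F$ one has $\delta_{\mathfrak{q}}(q)=-1$ at primes $q\mid \mathcal{Q}$, so $\delta|_{\mathbf{A}_{\mathbf{Q}}^{\times}}$ is nontrivial and a Waldspurger-type formula cannot be applied directly to the pair $(f_E,\delta)$ at all (Remark 3.3); the paper circumvents this by introducing auxiliary characters $\chi_3$ and $\delta_1$ with associated CM extensions, and playing Gross--Zagier derivative formulas over $\mathbf{Q}$ and over $F$ (under the hypothesis $L'(1,E/F,\chi_F)\neq 0$, harmless in the application but essential to the method) against Zhang's central value formulas, so that all transcendental factors cancel and only the ratio $(\deg_{E/F}/\deg_{E/\mathbf{Q}})\cdot(\langle\phi_E,\phi_E\rangle/\langle\Phi_E,\Phi_E\rangle)$ survives, which Lemma 3.4 shows to be a square. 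The step you defer as ``local bookkeeping'' is where all of this lives, and it is not bookkeeping.
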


Here, the main point being that, for the proof of Theorem 4.3 in [BD1], a crucial input is the main result in [BD2] (to be recalled in section 2 below), which was established under the condition that $E/\mathbf{Q}$ has multiplicative reduction at some prime other than $p$, and this is also the {\it only place} where [BD1] had to assume this condition on $E/\mathbf{Q}$. In this paper, we show that the main result of [BD2] holds without having to assume that $E/\mathbf{Q}$ has multiplicative reduction at some prime other than $p$. Once this is done, then the same argument used for the proof of Theorem 4.3 of [BD1] go through, without having to assume that $E/\mathbf{Q}$ has multiplicative reduction at some prime other than $p$. On the other hand, the condition $\chi_1(-M) = -w_M$ seems to be quite intrinsic to the method used in [BD1] and we are not able to remove it at the present moment. 

\bigskip

The organization of this paper is as follows. In section 2, we recall the main result of [BD2], and also our previous work [M1], where we were able to establish the main result of [BD2] without having to assume that $E/\mathbf{Q}$ has multiplicative reduction at some prime other than $p$, {\it provided that} we could show that certain normalized special values of $L$-functions that were studied in [M2], is a square of a rational number, {\it c.f.} Theorem 2.2 and Corollary 2.3. The remaining parts of the paper are then independent of sections 1 and 2. In section 3, we study this type of normalized special values of $L$-functions in a more general setting, which is of interest on its own, especially in view of the fact that, the Birch and Swinnerton-dyer conjecture also implies that these normalized special $L$-values are squares of rational numbers, up to factor of $2$. Modulo certain hypothesis which will be harmless for establishing Theorem 2, we show that these normalized special $L$-values are indeed squares of rational numbers, {\it c.f.} Theorem 3.2 for the precise statement. This is achieved by combining the arguments that are generalization of those in [M2], with the results in section 4, concerning the extensions of some of the results of Ribet-Takahashi [RT] and Takahashi [Tak] to the setting of Shimura curves associated to quaternion algebras over totally real fields [De].

\section*{Notations}

For a number field $F$, we will denote its ring of integers as $\mathcal{O}_F$, and a place of $F$ will in general be noted as $v$. The finite places, which correspond to the prime ideals of $\mathcal{O}_F$, will also be denoted as $\mathfrak{l},\mathfrak{p},\mathfrak{q}$ etc. The completion of $F$ at $v$ is noted as $F_v$, and $\pi_v$ denotes a local uniformizer of $F_v$.

\bigskip

If $V$ is a vector space over a field $L$ equipped with an action of a group $G$, then for $\chi$ a character of $G$ with values in $L$, we denote by $V^{\chi}$ the $\chi$-eigenspace of $V$ with respect to the action of $G$. 

\section{R\'esum\'e on the results in [BD2] and [M1]}

In this section, we recall the main results from [BD2] and [M1]. As before $E/\mathbf{Q}$ is an elliptic curve with conductor $N$, that $E/\mathbf{Q}$ has multiplicative reduction at a fixed odd prime $p \|N$, and $f_E$ is the weight two cuspidal eigenform of level $N$ attached to $E/\mathbf{Q}$. Denote $M:=N/p$. The sign $w_N$ of the Atkin-Lehner involution at $N$ acting on $f_E$ has a decomposition $w_N =w_M \cdot w_p$, where $w_M$ and $w_p$ are signs of the Atkin-Lehner involution at $M$ and respectively at $p$ acting on $f_E$. The sign of the functional equation for the complex $L$-function $L(s,E/\mathbf{Q})=L(s,f_E)$ is equal to $-w_N$. 

\bigskip
Let $a_p = a_p(E)$ be the $U_p$-eigenvalue of $f_E$. Then since $E/\mathbf{Q}$ is multiplicative at $p$, one has $a_p = -w_p = \pm1$, and thus in particular $f_E$ is ordinary at $p$. Let $f_{\infty}=\{f_k\}$ be the $p$-adic Hida family lifting $f_E$; so for integer $k \geq 2$ that is sufficiently close to $2$ in the weight space, $f_k$ is a $p$-ordinary cuspidal eigenform of weight $k$, and such that $f_2 =f_E$. Let also $\chi_1$ be a Dirichlet character whose conductor is relatively prime to $N$. One has the Mazur-Tate-Teitelbaum [MTT] $p$-adic $L$-functions $L_p(s,f_k,\chi_1)$ attached to $f_k$ and $\chi_1$, with respect to choices of complex period factors $\Omega^{\pm}_{f_k}$ (which does not depend on $\chi_1$) for the newform associated to $f_k$. Furthermore, following Greenberg-Stevens [GS], the complex periods $\Omega_{f_k}^{\pm}$ can be chosen in such a way that the $p$-adic $L$-functions $L_p(s,f_k,\chi_1)$ associated to the members of the Hida family $f_{\infty} =\{ f_k \}$ can be packaged into a two variable $p$-adic $L$-function, i.e. there exists a $p$-adic analytic function $L_p(s,k,f_{\infty},\chi_1)$ of the two $p$-adic variables $s$ and $k$, such that for integer $k \geq 2$ sufficiently close to $2$ in the weight space, one has $L_p(s,k,f_{\infty},\chi_1)= L_p(s,f_k,\chi_1)$. 

\bigskip

Suppose now that the Dirichlet character $\chi_1$ is quadratic (or trivial), which thus corresponds to a quadratic (or trivial) extension of $\mathbf{Q}$; denote the discriminant of this extension as $D_1$ (thus the conductor of $\chi_1$ is equal to the absolute value of $D_1$). The sign of the functional equation for the complex $L$-function $L(s,E/\mathbf{Q},\chi_1) =L(s,f_E,\chi_1)$ is equal to $-\chi_1(-N) w_N$. We now recall the following theorem from [BD2], that concerns the specialization of the two variable $p$-adic $L$-function $L_p(s,k,f_{\infty},\chi_1)$ to the line $s=k/2$, under the conditions that $\chi_1(p)=a_p$, and also that $\chi_1(-N) = w_N$, i.e. the sign of the functional equation for $L(s,E/\mathbf{Q},\chi_1) =L(s,f_E,\chi_1)$ is equal to $-1$.

\begin{theorem} (Theorem 5.4 of [BD2]) Assume $\chi_1$ satisfies:
\[
\chi_1(-N) = w_N, \mbox{ and } \chi_1(p)=a_p.
\]
In addition, assume that $E/\mathbf{Q}$ has multiplicative reduction at some prime other than $p$. Then we have:
\begin{itemize}
\item The function $L_p(k/2,k,f_{\infty},\chi_1)$ vanishes to order at least $2$ at $k=2$. 
\item There exists a global point $\mathbf{P}_{\chi_1} \in (E(\mathbf{Q}(\sqrt{D_1})) \otimes \mathbf{Q})^{\chi_1}$, and $\ell \in \mathbf{Q}^{\times}$, such that
 \[ \frac{d^2}{dk^2} L_p(k/2,k,f_{\infty},\chi_1) \Big|_{k=2} = \ell (\log_E \mathbf{P}_{\chi_1})^2 \]
\noindent  here $\log_E$ is the $p$-adic formal group logarithm on $E$ (defined as in equation (7) of [BD2]). 
 \item The point $\mathbf{P}_{\chi_1}$ is non-torsion if and only if $L^{\prime}(1,E/\mathbf{Q},\chi_1) \neq 0$. 
 \item The image of $\ell$ in $\mathbf{Q}^{\times}/(\mathbf{Q}^{\times})^2$ is equal that of $L^{\alg}(1,E/\mathbf{Q},\chi_2)=L^{\alg}(1,f_E,\chi_2)$, the algebraic part of the $L$-value $L(1,E/\mathbf{Q},\chi_2) =L(1,f_E,\chi_2)$, where $\chi_2$ is any quadratic Dirichlet character whose conductor is relatively prime to $N$ and also relatively prime to the conductor of $\chi_1$, and satisfying the following conditions:
 
 (a) $\chi_2(-1) = \chi_1(-1)$. 
 
 (b) $\chi_2(l) =\chi_1(l)$ for all primes $l$ dividing $M$.
 
 (c) $\chi_2(p)=-\chi_1(p)$.
 
 (d) $L(1,E/\mathbf{Q},\chi_2) \neq 0$.
 \end{itemize}
\end{theorem}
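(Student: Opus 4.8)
The plan is to treat the four bullets in turn. \emph{Order of vanishing.} First I would factor the restricted two-variable $p$-adic $L$-function in the style of Greenberg--Stevens [GS]. Writing $\alpha(k)$ for the $U_p$-eigenvalue along the Hida family (so that $\alpha(2)=a_p$), I expect an identity of the shape
\[
L_p(k/2,k,f_\infty,\chi_1)=\Bigl(1-\frac{\chi_1(p)\,p^{k/2-1}}{\alpha(k)}\Bigr)\cdot L_p^{*}(k,\chi_1),
\]
where $L_p^{*}$ is an \emph{improved} $p$-adic $L$-function whose value at $k=2$ is, up to an explicit nonzero factor, the algebraic part of $L(1,E/\mathbf{Q},\chi_1)$. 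The exceptional factor vanishes at $k=2$ exactly because of the hypothesis $\chi_1(p)=a_p=\alpha(2)$, while $L_p^{*}(2,\chi_1)$ vanishes because the hypothesis $\chi_1(-N)=w_N$ forces the sign of the functional equation of $L(s,E/\mathbf{Q},\chi_1)$ to equal $-1$, so that $L(1,E/\mathbf{Q},\chi_1)=0$. A product of two simple zeros gives vanishing to order at least $2$.

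\emph{Leading term.} Since both factors vanish at $k=2$, differentiating twice leaves only the cross term, so that $\tfrac{d^2}{dk^2}L_p(k/2,k,f_\infty,\chi_1)\big|_{k=2}$ is a nonzero rational multiple of $E'(2)\cdot\tfrac{d}{dk}L_p^{*}(k,\chi_1)\big|_{k=2}$, where $E(k)=1-\chi_1(p)p^{k/2-1}/\alpha(k)$. By the Greenberg--Stevens computation and the Tate uniformization, $E'(2)$ is a nonzero multiple of the $\mathcal{L}$-invariant $\log_p(q_E)/\ord_p(q_E)$. The heart of the matter is the remaining derivative: I would prove a $p$-adic Gross--Zagier formula in the weight variable identifying it with the cyclotomic $p$-adic height $\langle\mathbf{P}_{\chi_1},\mathbf{P}_{\chi_1}\rangle_p$ of a Heegner point $\mathbf{P}_{\chi_1}\in(E(\mathbf{Q}(\sqrt{D_1}))\otimes\mathbf{Q})^{\chi_1}$, realized as the $p$-adic Abel--Jacobi image of a CM divisor under the Cerednik--Drinfeld uniformization attached to a suitable quaternion algebra $B$. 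The square in the statement then appears because this height is a quadratic self-pairing: by the Mazur--Tate--Teitelbaum/Schneider formula, for a point special at $p$ it is a rational multiple of $\log_E(\mathbf{P}_{\chi_1})^2/\mathcal{L}$. Multiplying by $E'(2)$, the $\mathcal{L}$-invariant cancels and one is left with $\ell\,(\log_E\mathbf{P}_{\chi_1})^2$, $\ell\in\mathbf{Q}^\times$; this cancellation is also why no $\mathcal{L}$-invariant survives in the final formula.

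\emph{Non-triviality and the square class of $\ell$.} For the third bullet I would compare $\mathbf{P}_{\chi_1}$ with the Heegner point of the complex Gross--Zagier formula, whose canonical height is a nonzero multiple of $L'(1,E/\mathbf{Q},\chi_1)$; non-vanishing of that height is equivalent to $\mathbf{P}_{\chi_1}$ being non-torsion. For the fourth bullet I would track $\ell$ as the ratio of automorphic periods entering the computation and apply a Waldspurger-type identity. The auxiliary character $\chi_2$ is chosen so that conditions (a)--(d) make the sign of the functional equation of $L(s,E/\mathbf{Q},\chi_2)$ equal $+1$ (indeed (c) flips the local sign at $p$, since one computes $\chi_2(-N)=-\chi_1(-N)$, so that $L(1,E/\mathbf{Q},\chi_2)$ can be nonzero) while keeping $\chi_2$ attached to the \emph{same} quaternion algebra $B$; Waldspurger's formula then identifies the period ratio, modulo squares, with $L^{\alg}(1,E/\mathbf{Q},\chi_2)$.

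\emph{Main obstacle.} I expect the crux to be the weight-variable $p$-adic Gross--Zagier formula: producing the point $\mathbf{P}_{\chi_1}$ and pinning the period down to \emph{exactly} $\log_E\mathbf{P}_{\chi_1}$, rather than to some Heegner divisor only well defined in a Shimura-curve Jacobian. This is also the one place where the hypothesis that $E$ has a second prime of multiplicative reduction is genuinely used: it forces the quaternion algebra $B$, and the comparison of congruence numbers and degrees of parametrization (the Ribet--Takahashi input), into the controlled form needed to make the period identification precise. Removing that hypothesis, the goal of the present paper, is exactly what requires the extension of the results of Ribet--Takahashi and Takahashi carried out in section 4.
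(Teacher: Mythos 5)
This statement is Theorem 2.1 of the paper, quoted verbatim from Theorem 5.4 of [BD2]: the paper itself supplies no proof of it (only the surrounding remarks on the existence of $\chi_2$ and the definition of $L^{\alg}$), since its whole point is to \emph{remove} the auxiliary hypothesis by the independent route of Theorem 2.2, Theorem 3.2 and Lemma 3.4. So your attempt has to be judged against the argument of [BD2], whose skeleton is mirrored in [M1] and in sections 2--3 of this paper.

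Measured that way, there is a genuine gap at the heart of your sketch, in both of its load-bearing steps. First, the claimed factorization $L_p(k/2,k,f_\infty,\chi_1)=\bigl(1-\chi_1(p)p^{k/2-1}/\alpha(k)\bigr)L_p^{*}(k,\chi_1)$ with an analytic improved function along the central critical line is not available: the Greenberg--Stevens improved $p$-adic $L$-function lives on the line $s=1$, not $s=k/2$, and without it your ``product of two simple zeros'' argument for order $\geq 2$ does not get off the ground. Second, the weight-variable $p$-adic Gross--Zagier formula you posit -- identifying $\frac{d}{dk}L_p^{*}(k,\chi_1)\big|_{k=2}$ with a cyclotomic $p$-adic height, then invoking a Schneider-type degeneration $\langle\mathbf{P},\mathbf{P}\rangle_p\sim(\log_E\mathbf{P})^2/\mathcal{L}$ to cancel the $\mathcal{L}$-invariant -- is not a theorem anyone can cite; it is at least as deep as the statement being proved, and the cyclotomic height itself is exceptional at a split multiplicative prime. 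The actual mechanism of [BD2] passes through no $p$-adic heights and no $\mathcal{L}$-invariant at any stage. Conditions (a), (b), (c) on $\chi_2$ are exactly what make $(\chi_1,\chi_2)$ a \emph{genus pair}: $\chi_1\chi_2$ cuts out a real quadratic field in which $p$ is inert and all $\ell\mid M$ split, and one factors the central-line restriction of the $p$-adic $L$-function over that field twisted by the genus character as the product of the two Mazur--Kitagawa functions for $\chi_1$ and $\chi_2$ (up to a unit). Condition (d) together with $\chi_2(p)=-a_p$ makes the $\chi_2$-factor a unit at $k=2$ with value $2L^{\alg}(1,f_E,\chi_2)$ modulo squares (this is the source of the factor $2$ visible in equation (2.1) of the paper). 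The genus-field side is then, up to a unit, the \emph{square} of a single analytic function of $k$ vanishing at $k=2$, whose first derivative is computed -- via a further auxiliary CM field, Heegner points, and Cerednik--Drinfeld/Teitelbaum-style integration on a $p$-adically uniformized Shimura curve -- to be $\log_E$ of a genuine global point; the order-$\geq 2$ vanishing and the shape $\ell(\log_E\mathbf{P}_{\chi_1})^2$ fall out simultaneously from this square structure. Your sketch omits the genus factorization entirely, which is why it cannot account for conditions (a) and (b) on $\chi_2$: in your version $\chi_2$ only appears at the end through an unspecified Waldspurger identity.

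Several peripheral points of yours are correct and do match the argument: the computation $\chi_2(-N)=-\chi_1(-N)$, hence sign $+1$ for $\chi_2$ and $-1$ for $\chi_1$; the exceptional zero coming from $\chi_1(p)=a_p$; the third bullet via complex Gross--Zagier plus Kolyvagin(--Logachev), with the point descending to $(E(\mathbf{Q}(\sqrt{D_1}))\otimes\mathbf{Q})^{\chi_1}$ because the complementary eigenspace has rank zero (compare the paper's repeated appeal to Theorem 4.7 of [BD2] in section 3). Your closing diagnosis of where the hypothesis of a second multiplicative prime enters is also essentially right, though it bites earlier and more rigidly than you indicate: without a second prime $q\,\|\,N$ of multiplicative reduction there is no indefinite quaternion algebra over $\mathbf{Q}$ ramified at $p$ and at a finite prime where $f_E$ is new, hence no Cerednik--Drinfeld-uniformized Shimura curve over $\mathbf{Q}$ at all -- the Ribet--Takahashi comparison you mention then serves to pin down the square class of $\ell$, and it is precisely this input that section 4 of the paper (following [De]) generalizes to the totally real setting so that the hypothesis can be dropped.
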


\bigskip
Here in the final portion of the above theorem, we note, under the condition $\chi_1(-N) =w_N$, one has that for any quadratic Dirichlet character $\chi_2$ satisfying conditions (a), (b), (c) above, that the sign of the functional equation for $L(s,E/\mathbf{Q},\chi_2)=L(s,f_E,\chi_2)$ is equal to $-\chi_2(-N) w_N = \chi_1(-N) w_N=+1$, and thus there exists infinitely many $\chi_2$ satisfying the conditions (a), (b), (c) and (d), by the main result of Murty-Murty [MM] or by Friedberg-Hoffstein [FH]. Secondly, the quantity $L^{\alg}(1,E/\mathbf{Q},\chi_2)=L^{\alg}(1,f_E,\chi_2)$, namely the algebraic part of the $L$-value $L(1,E/\mathbf{Q},\chi_2) =L(1,f_E,\chi_2)$, is defined as:
\[
L^{\alg}(1,E/\mathbf{Q},\chi_2)= L^{\alg}(1,f_E,\chi_2) : = \frac{ c_{\chi_2} L(1,f_E,\chi_2)}{\tau(\chi_2) \Omega^{\chi_2(-1)}_{f_E}}
\]
where $c_{\chi_2}$ is the conductor of $\chi_2$ and $\tau(\chi_2)$ is the Gauss sum of $\chi_2$, while $\Omega_{f_E}^{\pm}$ are the complex periods for $f_E$ that are used for the definition of the $p$-adic $L$-function associated to $f_E$. 
\bigskip

The proof of Theorem 4.3 of [BD1] which is recalled as Theorem 1.1 in the Introduction, relies on the above theorem of [BD2] as one of the main inputs; in particular, the hypothesis that requires $E/\mathbf{Q}$ to have multiplicative reduction at some prime other than $p$ in [BD1], is due to the use of this theorem from [BD2], which requires this condition to be satisfied.

\bigskip

In [M1], we had extended the above main theorem of [BD2] to the setting of elliptic curves over totally real fields $F$ such that $p$ is inert in $F$. By using a base change argument to a suitable real quadratic field $F$, we then obtained in section 6 of [M1] the following result concerning the original situation of $E/\mathbf{Q}$, {\it without} having to assume that $E/\mathbf{Q}$ has multiplicative reduction at some prime other than $p$ (the idea of considering base change to real quadratic fields was already suggested in [BD2]). We state the result that we need from [M1] as follows:

\begin{theorem} ({\it c.f.} p. 929 - 931 of [M1])
As before $E/\mathbf{Q}$ is an elliptic curve with multiplicative reduction at an odd prime $p$ with conductor $N$. Assume again that $\chi_1$ satisfies:
\[
\chi_1(-N) = w_N, \mbox{ and } \chi_1(p)=a_p.
\]
Then we have:
\begin{itemize}
\item The function $L_p(k/2,k,f_{\infty},\chi_1)$ vanishes to order at least $2$ at $k=2$. 
\item There exists a global point $\mathbf{P}^{\prime}_{\chi_1} \in (E(\mathbf{Q}(\sqrt{D_1})) \otimes \mathbf{Q})^{\chi_1}$, and $\ell^{\prime \prime} \in \mathbf{Q}^{\times}$, such that
 \[ \frac{d^2}{dk^2} L_p(k/2,k,f_{\infty},\chi_1) \Big|_{k=2} = \ell^{\prime \prime} (\log_E \mathbf{P}^{\prime}_{\chi_1})^2. \]
\noindent  Here the image of $\ell^{\prime \prime}$ in $\mathbf{Q}^{\times}/(\mathbf{Q}^{\times})^2$ is specified more precisely below. 
\item The point $\mathbf{P}_{\chi_1}^{\prime}$ is non-torsion if and only if $L^{\prime}(1,E/\mathbf{Q},\chi_1) \neq 0$. 
 \end{itemize}
\end{theorem}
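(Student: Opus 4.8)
The plan is to deduce Theorem 2.2 from the totally real analogue of Theorem 2.1 by a base change argument, exploiting the factorization of $L$-functions over a real quadratic field to eliminate the auxiliary hypothesis on a second prime of multiplicative reduction. The conceptual reason the hypothesis disappears is that over a field $F$ of even degree one may realize the relevant $p$-adically uniformized Shimura curve using a quaternion algebra ramified at the prime above $p$ together with \emph{one} archimedean place of $F$ --- an even set of ramified places --- so that no second finite prime of multiplicative reduction is needed. I take as the deepest input the resulting extension of Theorem 2.1 to elliptic curves over totally real fields in which $p$ is inert, which is the main content of [M1].

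First I would choose the auxiliary field. Select a real quadratic field $F/\mathbf{Q}$, say $F=\mathbf{Q}(\sqrt{d})$ with $d>0$ and discriminant prime to $ND_1$, whose associated quadratic Dirichlet character $\eta_F$ satisfies: $p$ is inert in $F$ (so $\eta_F(p)=-1$), every prime $\ell\mid M$ splits in $F$ (so $\eta_F(\ell)=+1$), and $\eta_F(-N)=-1$. These are finitely many local conditions, and among the infinitely many $\eta_F$ meeting them the results of Murty--Murty [MM] and Friedberg--Hoffstein [FH] provide one for which moreover $L(1,f_E,\chi_1\eta_F)\neq 0$. Writing $\chi_2:=\chi_1\eta_F$, one checks directly that $\chi_2$ satisfies conditions (a)--(d) of Theorem 2.1: $\chi_2(-1)=\chi_1(-1)$ since $\eta_F$ is even, $\chi_2(\ell)=\chi_1(\ell)$ for $\ell\mid M$, $\chi_2(p)=-\chi_1(p)=-a_p$ since $\eta_F(p)=-1$, and $L(1,f_E,\chi_2)\neq 0$ by construction; in particular the sign of the functional equation for $L(s,f_E,\chi_2)$ equals $-\chi_2(-N)w_N=+1$, the rank-zero regime, and there is no exceptional zero at $p$ because $\chi_2(p)a_p=-a_p^2=-1$.

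Next I would apply the totally real generalization of Theorem 2.1 to the base change $E/F$, the Hilbert modular form $\BC_F(f_E)$ (modular since it is a cyclic base change) and the base-changed character $\chi_{1,F}$; as $p$ is inert, $\mathfrak{p}=p\mathcal{O}_F$ is the prime of multiplicative reduction, and the hypotheses $\chi_{1,F}(-N_F)=w_{N_F}$ and $\chi_{1,F}(\mathfrak{p})=a_{\mathfrak{p}}$ are inherited from those on $\chi_1$. This produces a two variable $p$-adic $L$-function over $F$ that vanishes to order $\geq 2$ at $k=2$ along $s=k/2$, with second derivative there equal to a rational multiple of $(\log_{E/F}\tilde{\mathbf{P}})^2$ for a global point $\tilde{\mathbf{P}}$ in the $\chi_{1,F}$-isotypic part of the Mordell--Weil group of $E$ over the relevant abelian extension of $F$. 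I would then descend via the factorization
\[ L_p^{F}(s,k,\BC_F(f_{\infty}),\chi_{1,F}) = L_p(s,k,f_{\infty},\chi_1)\cdot L_p(s,k,f_{\infty},\chi_2), \]
where the second factor is, at $(s,k)=(1,2)$, a nonzero multiple of $L^{\alg}(1,f_E,\chi_2)$ (nonzero by (d), with nonvanishing interpolation factor by the last observation above). Hence at $k=2$ both the order of vanishing and the leading term are carried by the first factor: this gives the first bullet, and dividing the second derivative of the product by the nonzero value $L_p(1,2,f_{\infty},\chi_2)$ expresses $\frac{d^2}{dk^2}L_p(k/2,k,f_{\infty},\chi_1)|_{k=2}$ as $\ell''(\log_E\mathbf{P}'_{\chi_1})^2$. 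Here, since $L(1,f_E,\chi_2)\neq 0$ forces (by Kolyvagin) the vanishing of the $\chi_2$-isotypic part of the Mordell--Weil group, the $\chi_{1,F}$-isotypic part over $F$ coincides with the $\chi_1$-isotypic part over $\mathbf{Q}(\sqrt{D_1})$, into which $\tilde{\mathbf{P}}$ descends as $\mathbf{P}'_{\chi_1}\in(E(\mathbf{Q}(\sqrt{D_1}))\otimes\mathbf{Q})^{\chi_1}$, using that $p$ inert identifies the formal group logarithm over $F_{\mathfrak{p}}$ with $\log_E$ over $\mathbf{Q}_p$. The non-torsion statement then follows because the factorization $L(s,E/F,\chi_{1,F})=L(s,E/\mathbf{Q},\chi_1)L(s,E/\mathbf{Q},\chi_2)$ with nonzero second factor at $s=1$ gives $\mathrm{ord}_{s=1}L(s,E/F,\chi_{1,F})=\mathrm{ord}_{s=1}L(s,E/\mathbf{Q},\chi_1)$.

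The main obstacle I anticipate is the bookkeeping of periods across the base change. For the displayed factorization to hold on the nose, and in particular for $\ell''$ to emerge in $\mathbf{Q}^{\times}$ with a cleanly identifiable class in $\mathbf{Q}^{\times}/(\mathbf{Q}^{\times})^2$, one must compare the complex and $p$-adic period normalizations $\Omega_{f_E}^{\pm}$ used over $\mathbf{Q}$ with those for $\BC_F(f_E)$ over $F$, and verify that the relevant period ratio is rational up to squares. It is precisely here that the normalized special value $L^{\alg}(1,f_E,\chi_2)$ enters the square class of $\ell''$ (matching the fourth bullet of Theorem 2.1), so that the final task --- showing this normalized value is itself a square of a rational number, carried out in section 3 --- is what upgrades the square class ``specified below'' to the clean conclusion.
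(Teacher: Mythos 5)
Your skeleton coincides with the paper's: Theorem 2.2 is not proved afresh in this paper but quoted from [M1], whose argument is exactly the base change you describe --- pass to a real quadratic $F$ in which $p$ is inert and all primes dividing $M$ split, where the parity obstruction disappears because the quaternion algebra may ramify at $\mathfrak{p}=p\mathcal{O}_F$ together with one archimedean place, apply the totally real extension of [BD2] proved in [M1], and descend through $L(s,E/F,\chi_F)=L(s,E/\mathbf{Q},\chi_1)\cdot L(s,E/\mathbf{Q},\chi_2)$ with $\chi_2$ supplied by [MM]/[FH] in the rank-zero regime. Your verification of conditions (a)--(d) for $\chi_2=\chi_1\eta_F$, the observation that $\chi_2(p)=-a_p$ rules out an exceptional zero, the Kolyvagin step identifying $(E(H_{\chi_F})\otimes\mathbf{Q})^{\chi_F}$ with $(E(\mathbf{Q}(\sqrt{D_1}))\otimes\mathbf{Q})^{\chi_1}$, and the order-of-vanishing bookkeeping for the non-torsion criterion all match the route of [M1] as recalled in section 2.

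The genuine gap is in your final paragraph, at precisely the decisive point. You assert that the period ratio arising in the base change comparison ``is rational up to squares'' and that in consequence the square class of $\ell''$ is $L^{\alg}(1,f_E,\chi_2)$, matching the fourth bullet of Theorem 2.1. Neither is available at this stage. What the base change argument actually yields (Remark 6.6 of [M1], recalled as equation (2.1)) is that $\ell''$ equals $L^{\alg}(1,E/\mathbf{Q},\chi_2)$ times the factor $2\,D_F^{1/2}(\mathcal{N}_{F/\mathbf{Q}}\mathfrak{c}_{\delta})\,L(1,E/F,\delta)\,/\,(\tau(\delta)(\Omega_{f_E}^{w})^2)$ modulo $(\mathbf{Q}^{\times})^2$, where $\delta$ is an auxiliary quadratic Hecke character of $F$ satisfying conditions (1)--(4) --- an object entirely absent from your proposal. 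It is needed because the $p$-adic $L$-function over $F$ in [M1] is normalized quaternionically, and its periods can only be compared with $(\Omega_{f_E}^{w})^2$ through a rank-zero special value over $F$; since $\delta$ restricted to $\mathbf{A}_{\mathbf{Q}}^{\times}$ is nontrivial (Remark 3.3 of the paper), a Waldspurger formula over $\mathbf{Q}$ cannot be invoked, and Shimura's rationality argument (Proposition 3.1) gives only that this factor lies in $\mathbf{Q}^{\times}$, not in $(\mathbf{Q}^{\times})^2$. Establishing its squareness (under $L'(1,E/F,\chi_F)\neq 0$) is exactly Theorem 3.2, proved in sections 3--4 via Gross--Zagier/Zhang formulas combined with the Ribet--Takahashi/Takahashi/Deines degree comparisons for Shimura curves over totally real fields --- the new content of the paper. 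Relatedly, your parenthetical claim that section 3 shows the normalized value $L^{\alg}(1,f_E,\chi_2)$ is itself a square misreads the paper: what is shown square there is the quantity (2.2) involving $\delta$, not $L^{\alg}(1,f_E,\chi_2)$ (were the latter always a square, the fourth bullet of Theorem 2.1 would be vacuous). So as a proof of the three bullets of Theorem 2.2 as literally stated your outline is serviceable modulo citing [M1] for the totally real input, but the specification of $\ell''$ you offer is incorrect, and repairing it is where all the new difficulty of this paper resides.
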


\bigskip

Here, according to Remark 6.6 of [M1], the specification of $\ell^{\prime \prime}$ in $\mathbf{Q}^{\times}/(\mathbf{Q}^{\times})^2$ is given as follows. We need some preparations. Firstly, in the context of Theorem 2.2, we also have the condition $\chi_1(-N)=w_N$ as in Theorem 2.1, and thus as before, there exists infinitely many quadratic Dirichlet characters $\chi_2$ whose conductor is relatively prime to $N$, and also relatively prime to the conductor of $\chi_1$, and satisfies the conditions (a), (b), (c), (d) of the final portion of Theorem 2.1; recall that these are:

 (a) $\chi_2(-1) = \chi_1(-1)$. 
 
 (b) $\chi_2(l) =\chi_1(l)$ for all primes $l$ dividing $M :=N/p$.
 
 (c) $\chi_2(p)=-\chi_1(p)$.
 
 (d) $L(1,E/\mathbf{Q},\chi_2) \neq 0$.

\bigskip
Take any such choice of $\chi_2$. The product $\chi_1 \cdot \chi_2$ is then a quadratic Dirichlet character that corresponds to a real quadratic extension $F$ of $\mathbf{Q}$, with the property that $p$ is inert in $F$, while all primes dividing $M$ splits in $F$. Denote by $\mathfrak{p} := p\mathcal{O}_F$ the prime of $\mathcal{O}_F$ above $p$. The pair $(\chi_1,\chi_2)$ also defines a narrow genus class character of $F$, which will be noted as $\chi_F$. When regarded as a Hecke character $\chi_F = \otimes^{\prime}_v \chi_{F,v}$, it is unramified at all the finite primes of $\mathcal{O}_F$, and one has the following: $\chi_{F,v}(-1) = \chi_1(-1) = \chi_2(-1)$ for archimedean $v$; if $q$ is a prime number that is inert in $F$ and $\mathfrak{q} = q\mathcal{O}_F$ the prime of $\mathcal{O}_F$ above $q$, then we have $\chi_{F,\mathfrak{q}} \equiv 1$; in particular we have $\chi_{F,\mathfrak{p}} \equiv  1$. On the other hand, if $l$ is a prime number that splits in $F$, then $\chi_{F,\mathfrak{l}}(\pi_{\mathfrak{l}}) =\chi_1(l)=\chi_2(l)$ for prime $\mathfrak{l}$ of $F$ that lies over $l$. 

\bigskip
We consider the base change of $E$ to $F$. Then $E/F$ is again modular by the theory of base change; more precisely $E/F$ is associated to $\mathbf{f}_E$, the cuspidal Hilbert eigenform over $F$ of parallel weight two with level $N \mathcal{O}_F$, given by the base change of $f_E$ from $\GL_{2/\mathbf{Q}}$ to $\GL_{2/F}$. Then for any Hecke character $\delta$ of $F$, the complex $L$-function $L(s,E/F,\delta)$ is equal to $L(s,\mathbf{f}_E,\delta)$. We have in particular that:
\begin{eqnarray*}
& & L(s,E/F,\chi_F) = L(s,\mathbf{f}_E,\chi_F) \\
&=& L(s,E/\mathbf{Q},\chi_1) \cdot L(s,E/\mathbf{Q},\chi_2) = L(s,f_E,\chi_1) \cdot L(s,f_E,\chi_2). 
\end{eqnarray*}
Now the signs of the functional equations for $L(s,E/\mathbf{Q},\chi_1)$ and $L(s,E/\mathbf{Q},\chi_2)$ are opposite to each other, and hence the sign of the functional equation for $L(s,E/F,\chi_F)$ is equal to $-1$. 

\bigskip

We now consider quadratic Hecke characters $\delta = \otimes^{\prime}_v \delta_v$ of $F$, unramified at the primes of $\mathcal{O}_F$ dividing $N$, and satisfying the conditions:

(1) $\delta_v(-1) = \chi_{F,v}(-1)$ for archimedean $v$. 

(2) $\delta_{\mathfrak{p}}(\pi_{\mathfrak{p}}) = - \chi_{F,\mathfrak{p}}(\pi_{\mathfrak{p}})=-1$

(3) $\delta_{\mathfrak{l}}(\pi_{\mathfrak{l}}) = \chi_{F,\mathfrak{l}}(\pi_{\mathfrak{l}})$ for any prime $\mathfrak{l}$ of $F$ dividing $M=N/p$. 

(4) $L(1,E/F,\delta) \neq 0$. 

\bigskip

Here for any quadratic Hecke character $\delta$ of $F$ satisfying conditions (1), (2), (3), the sign of the functional equation for $L(s,E/F,\delta)=L(s,\mathbf{f}_E,\delta)$ is opposite to that of $L(s,E/F,\chi_F)$, and hence is equal to $+1$, and thus the existence of such a $\delta$ that satisfies (1), (2), (3), (4) follows from the main result of [FH].

\bigskip

We can now finally come back to the portion of Theorem 2.2 concerning the specification of the image of the rational number $\ell^{\prime \prime}$ in $\mathbf{Q}^{\times}/(\mathbf{Q}^{\times})^2$. With the notations as already setup above, we have, according to Remark 6.6 of [M1], that the image of $\ell^{\prime \prime}$ in $\mathbf{Q}^{\times}/(\mathbf{Q}^{\times})^2$ is given by:
\begin{eqnarray}
& & \ell^{\prime \prime} \\ &=& L^{\alg}(1,E/\mathbf{Q},\chi_2) \cdot 2  \frac{D_F^{1/2} (\mathcal{N}_{F/\mathbf{Q}} \mathfrak{c}_{\delta}) L(1,E/F,\delta)}{\tau(\delta) (\Omega_{f_E}^{w})^2} \bmod{(\mathbf{Q}^{\times})^2} \nonumber
\end{eqnarray}

\bigskip

Here $\mathfrak{c}_{\delta}$ is the conductor of $\delta$ and $\mathcal{N}_{F/\mathbf{Q}} \mathfrak{c}_{\delta}$ is its norm, while $\tau(\delta)$ is the Gauss sum of $\delta$. The term $D_F$ is the discriminant of $F$, and finally $w : = \chi_1(-1)=\chi_2(-1)$. 

\bigskip

Thus in order to compare the derivative formula of Theorem 2.2 with that of Theorem 2.1, we need to analyze the quantity:
\begin{eqnarray}
 2  \frac{D_F^{1/2} \mathcal{N}_{F/\mathbf{Q}} \mathfrak{c}_{\delta} L(1,E/F,\delta)}{\tau(\delta) (\Omega_{f_E}^{w})^2}
\end{eqnarray}
The general method of Shimura allows us to prove that (2.2) is a rational number, {\it c.f.} Proposition 3.1 below, but to prove that it is actually the square of a rational number, new technique is required. We will analyze this in section 3 and 4 below (which are independent of section 1 and 2). In Theorem 3.2 below, whose proof will be completed in section 4, we show that (2.2) is indeed the square of a (non-zero) rational number, {\bf under} the conditions that  $L^{\prime}(1,E/\mathbf{Q},\chi_1) \neq 0$ and $L(1,E/\mathbf{Q},\chi_2) \neq 0$ (the condition $L(1,E/\mathbf{Q},\chi_2) \neq 0$ is of course already enforced as condition (d) on $\chi_2$ above). Admitting this for the moment, we thus obtain the:

\begin{corollary}
The statement of Theorem 2.1 holds without having to assume that $E/\mathbf{Q}$ is multiplicative at some prime other than $p$.
\end{corollary}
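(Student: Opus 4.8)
The plan is to deduce the Corollary directly from Theorem 2.2 and the (still to be proven) Theorem 3.2, the point being that Theorem 2.2 already delivers, \emph{with no hypothesis on the reduction of $E$ away from $p$}, every assertion of Theorem 2.1 except the precise square class of the proportionality constant. Indeed, Theorem 2.2 supplies the vanishing of $L_p(k/2,k,f_{\infty},\chi_1)$ to order at least $2$ at $k=2$, a global point $\mathbf{P}_{\chi_1}^{\prime} \in (E(\mathbf{Q}(\sqrt{D_1})) \otimes \mathbf{Q})^{\chi_1}$ together with a constant $\ell^{\prime \prime} \in \mathbf{Q}^{\times}$ obeying the second-derivative identity, and the non-torsion criterion. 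So the whole problem reduces to arranging the constant so that its image in $\mathbf{Q}^{\times}/(\mathbf{Q}^{\times})^2$ is exactly $L^{\alg}(1,E/\mathbf{Q},\chi_2)$, as the last bullet of Theorem 2.1 requires.

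For this I would invoke the formula (2.1) from Remark 6.6 of [M1], which asserts
\[
\ell^{\prime \prime} \equiv L^{\alg}(1,E/\mathbf{Q},\chi_2) \cdot \left( 2 \frac{D_F^{1/2} \mathcal{N}_{F/\mathbf{Q}} \mathfrak{c}_{\delta}\, L(1,E/F,\delta)}{\tau(\delta) (\Omega_{f_E}^{w})^2} \right) \pmod{(\mathbf{Q}^{\times})^2},
\]
so that the square classes of $\ell^{\prime \prime}$ and of $L^{\alg}(1,E/\mathbf{Q},\chi_2)$ differ precisely by the quantity (2.2). I would then separate two cases according to the vanishing of $L^{\prime}(1,E/\mathbf{Q},\chi_1)$. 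When $L^{\prime}(1,E/\mathbf{Q},\chi_1) \neq 0$, the hypotheses of Theorem 3.2 are met (condition (d) on $\chi_2$ already guarantees $L(1,E/\mathbf{Q},\chi_2) \neq 0$), so (2.2) is the square of a nonzero rational $u$. Writing $\ell^{\prime \prime} = L^{\alg}(1,E/\mathbf{Q},\chi_2) \cdot u^2$ and putting $\mathbf{P}_{\chi_1} := u\, \mathbf{P}_{\chi_1}^{\prime}$, which still lies in $(E(\mathbf{Q}(\sqrt{D_1})) \otimes \mathbf{Q})^{\chi_1}$, linearity of the formal logarithm turns the identity of Theorem 2.2 into $\frac{d^2}{dk^2} L_p(k/2,k,f_{\infty},\chi_1)\big|_{k=2} = \ell (\log_E \mathbf{P}_{\chi_1})^2$ with $\ell := L^{\alg}(1,E/\mathbf{Q},\chi_2) \in \mathbf{Q}^{\times}$, and all four conclusions of Theorem 2.1 then hold.

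Finally I would dispose of the degenerate case $L^{\prime}(1,E/\mathbf{Q},\chi_1) = 0$, where Theorem 3.2 is neither available nor needed: here the non-torsion criterion of Theorem 2.2 forces $\mathbf{P}_{\chi_1}^{\prime}$ to be torsion, so $\log_E \mathbf{P}_{\chi_1}^{\prime} = 0$ and both sides of the derivative identity already vanish; taking $\mathbf{P}_{\chi_1} := \mathbf{P}_{\chi_1}^{\prime}$ and $\ell := L^{\alg}(1,E/\mathbf{Q},\chi_2)$, which is a nonzero rational by condition (d), the identity $0 = \ell (\log_E \mathbf{P}_{\chi_1})^2$ holds trivially with the correct square class. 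The only substantive content of the argument is thus Theorem 3.2; the reduction to it is the elementary square-class bookkeeping above, and the sole subtlety is the degenerate case, settled by the fact that the formal logarithm kills torsion. I therefore expect the real obstacle to lie not in the Corollary but in the proof of Theorem 3.2 deferred to sections 3 and 4.
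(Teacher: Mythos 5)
Your proposal is correct and follows essentially the same route as the paper: both arguments reduce the Corollary to Theorem 3.2 via the square-class formula (2.1) of Remark 6.6 of [M1] when $L^{\prime}(1,E/\mathbf{Q},\chi_1) \neq 0$, and both dispose of the degenerate case $L^{\prime}(1,E/\mathbf{Q},\chi_1)=0$ by noting that $\mathbf{P}_{\chi_1}^{\prime}$ is torsion so both sides of the derivative identity vanish. Your explicit rescaling $\mathbf{P}_{\chi_1} := u\,\mathbf{P}_{\chi_1}^{\prime}$ is harmless but unnecessary, since Theorem 2.1 only pins down $\ell$ modulo $(\mathbf{Q}^{\times})^2$.
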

\begin{proof}
We place ourselves in the situation of Theorem 2.2. Firstly we note, subject to Theorem 3.2 that we will establish below, that when $L^{\prime}(1,E/\mathbf{Q},\chi_1) \neq 0$, then the quantity (2.2) is the square of a (non-zero) rational number; hence by (2.1), the image of $\ell^{\prime \prime}$ in $\mathbf{Q}^{\times}/(\mathbf{Q}^{\times})^2$ is the same as that of $\ell$ as specified in the final portion of Theorem 2.1. Thus the statement of Theorem 2.1 holds. 

\bigskip 
On the other hand, if $L^{\prime}(1,E/\mathbf{Q},\chi_1)=0$, then in the context of Theorem 2.2, the point $\mathbf{P}_{\chi_1}^{\prime}$ is torsion, and so $\log_E \mathbf{P}_{\chi_1}^{\prime}=0$. The derivative formula then shows that $\frac{d^2}{dk^2} L_p(k/2,k,f_{\infty},\chi_1) \big|_{k=2} = 0$. So again the statement of Theorem 2.1 holds; we may simply take $\mathbf{P}_{\chi}$ to be the zero element in $(E(\mathbf{Q}(\sqrt{D_1}) ) \otimes \mathbf{Q})^{\chi_1} $.
\end{proof}

\bigskip

Now with Corollary 2.3 being established, then as discussed in the Introduction, the same arguments in [BD1] used for establishing Theorem 1.1 (i.e. Theorem 4.3 of [BD1]) then carry through without having to assume that $E/\mathbf{Q}$ has multiplicative reduction at some prime other than $p$. Thus we obtain Theorem 1.2, subject to establishing Theorem 3.2 in the next two sections.

\section{Certain special $L$-values}

In this section, we look at certain special $L$-values that in particular include those stated in equation (2.2). The discussions in the present section and the next are independent of the previous sections.

\bigskip
Thus again $E/\mathbf{Q}$ is an elliptic curve with conductor $N$, and $f_E$ being the weight two cuspidal eigenform of level $N$ attached to $E/\mathbf{Q}$, and $\Omega_{f_E}^{\pm}$ be (positive) real and imaginary periods associated to $f_E$. We consider a setup slightly more general than the previous sections, in the following sense. We consider the setting where $N$ can be factored as $N=M \mathcal{Q}$, where $M,\mathcal{Q}$ are relatively prime, and $\mathcal{Q}$ is square-free (the setting in the previous sections is the special case where $\mathcal{Q}=p$). We fix a pair of quadratic (or trivial) Dirichlet characters $\chi_1,\chi_2$, whose conductors are relatively prime to each other and also relatively prime to $N$. The Dirichlet characters $\chi_1,\chi_2$ corresponds to quadratic (or trivial) extension of $\mathbf{Q}$, whose discriminants would be denoted as $D_1,D_2$ respectively. 

\bigskip 

We assume that the following conditions are satisfied (these conditions are the modified Heegner hypothesis in our current setting):
\begin{itemize}
\item $\chi_1(-1)=\chi_2(-1)$. We denote this common sign as $w$.
\item $\chi_1(l)= \chi_2(l)$ for all primes $l$ dividing $M$. 
\item $\chi_1(q)=-\chi_2(q)$ for all primes dividing $\mathcal{Q}$. 
\end{itemize}
We also assume that $\chi_1,\chi_2$ are not {\bf both} trivial; this is automatic from the third item of the above conditions when $\mathcal{Q} \neq 1$. The product $\chi_1 \cdot \chi_2$ is then a quadratic Dirichlet character that corresponds to a real quadratic field $F$, such that all primes dividing $M$ split in $F$, while all primes dividing $\mathcal{Q}$ are inert in $F$. The discriminant of $F$ is denoted as $D_F$. The pair $(\chi_1,\chi_2)$ defines a quadratic (or trivial) narrow genus class character $\chi_F$ of $F$ as before; denote by $H_{\chi_F}$ the quadratic (or trivial) narrow genus class field extension of $F$ that corresponds to $\chi_F$. Recall that $\chi_{F,v}(-1) = \chi_1(-1) = \chi_2(-1) =w$ for archimedean $v$; if $q$ is a prime number that is inert in $F$ and $\mathfrak{q} = q\mathcal{O}_F$ the prime of $\mathcal{O}_F$ above $q$, then we have $\chi_{F,\mathfrak{q}} \equiv 1$. On the other hand, if $l$ is a prime number that splits in $F$, then $\chi_{F,\mathfrak{l}}(\pi_{\mathfrak{l}}) =\chi_1(l)=\chi_2(l)$ for prime $\mathfrak{l}$ of $F$ that lies over $l$. 

\bigskip
Again denote by $\mathbf{f}_E$ the base change of $f_E$ from $\GL_{2/\mathbf{Q}}$ to $\GL_{2/F}$, which is a cuspidal Hilbert eigenform of parallel weight two and conductor equal to $N\mathcal{O}_F$ that corresponds to $E/F$. As before one has:
\begin{eqnarray*}
& & L(s,E/F,\chi_F) = L(s,\mathbf{f}_E,\chi_F) \\
&=& L(s,E/\mathbf{Q},\chi_1) \cdot L(s,E/\mathbf{Q},\chi_2) = L(s,f_E,\chi_1) \cdot L(s,f_E,\chi_2).
\end{eqnarray*}

\bigskip

In the case where $\mathcal{Q}$ is a product of an odd number of primes (which in particular is the case in the setting of previous two sections), then the sign of the functional equations for $ L(s,E/\mathbf{Q},\chi_1)$ and $L(s,E/\mathbf{Q},\chi_2)$ are opposite to each other, and hence the sign of the functional equation for $L(s,E/F,\chi_F)$ is $-1$. On the other hand, in the case, where $\mathcal{Q}$ is a product of an even number of primes,  then the sign of the functional equations for $ L(s,E/\mathbf{Q},\chi_1)$ and $L(s,E/\mathbf{Q},\chi_2)$ are equal, and hence the sign of the functional equation for $L(s,E/F,\chi_F)$ is $+1$. 

\bigskip

We now consider $L(s,E/F,\delta)$, where $\delta = \otimes^{\prime}_v \delta_v$ is a quadratic Hecke character of $F$, satisfying $\delta_v(-1) =w$ for the archimedean places $v$ of $F$. Then firstly, we have:

\begin{proposition} For any quadratic character $\delta  = \otimes^{\prime}_v \delta_v$ of $F$, satisfying $\delta_v(-1) =w$ for the archimedean places $v$ of $F$, we have that the expression:
\begin{eqnarray*} 
 \frac{D_F^{1/2} \mathcal{N}_{F/\mathbf{Q}} \mathfrak{c}_{\delta} L(1,E/F,\delta) }{ \tau(\delta) (\Omega_{f_E}^w)^2}
\end{eqnarray*}
is a rational number.
\end{proposition}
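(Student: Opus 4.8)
The plan is to deduce the rationality from Shimura's theory of critical values of the $L$-functions attached to Hilbert modular forms, applied to the base-change eigenform $\mathbf{f}_E$ and organized along the lines of the bookkeeping in [M2]. First I would observe that $L(1,E/F,\delta) = L(1,\mathbf{f}_E,\delta)$ is the central critical value of the parallel weight two Hilbert eigenform $\mathbf{f}_E$ over the real quadratic field $F$, twisted by the quadratic Hecke character $\delta$, and that the archimedean conditions $\delta_v(-1)=w$ pin down the relevant sign type at the two real places $v_1,v_2$ of $F$. If this central value vanishes the whole expression is $0$, which is rational, so I may assume $L(1,\mathbf{f}_E,\delta)\neq 0$.

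Next I would invoke Shimura's algebraicity theorem for special values of Hilbert modular $L$-functions. This attaches to $\mathbf{f}_E$ a collection of periods $u(r,\mathbf{f}_E)$, indexed by sign vectors $r\in\{\pm\}^{2}$, such that the normalized value
\[
\frac{\mathcal{N}_{F/\mathbf{Q}}\mathfrak{c}_{\delta}\cdot L(1,\mathbf{f}_E,\delta)}{(\text{explicit elementary factor})\cdot \tau(\delta)\cdot u(r,\mathbf{f}_E)}
\]
is algebraic, where $r$ is the sign vector determined by the infinity type of $\delta$ (both entries equal to $w$, by the condition $\delta_v(-1)=w$), and where the elementary factor is a suitable power of $\pi$ and of the discriminant. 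Crucially, this normalized quantity is Galois equivariant: applying $\sigma\in\Gal(\overline{\mathbf{Q}}/\mathbf{Q})$ carries it to the analogous quantity for $\mathbf{f}_E^{\sigma}$ and $\delta^{\sigma}$. Since $\mathbf{f}_E$ is a base change from $\mathbf{Q}$ its Hecke eigenvalues are rational, so $\mathbf{f}_E^{\sigma}=\mathbf{f}_E$; and $\delta$ is quadratic, so $\delta^{\sigma}=\delta$. Hence the normalized value is fixed by every $\sigma$, and therefore lies in $\mathbf{Q}$.

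It then remains to match Shimura's Hilbert period $u((w,w),\mathbf{f}_E)$, together with the residual elementary factor and the term $D_F^{1/2}$ appearing in the statement, against $(\Omega_{f_E}^{w})^{2}$. For this I would use the period relation for base change: up to $\mathbf{Q}^{\times}$ and an explicit transcendental factor, the Hilbert modular period of $\mathbf{f}_E$ factors as the product $\Omega_{f_E}^{r_1}\Omega_{f_E}^{r_2}$ of the two archimedean periods of the source form $f_E$, which for $r=(w,w)$ is precisely $(\Omega_{f_E}^{w})^{2}$. The factor $D_F^{1/2}$ in the proposition is exactly the discriminant contribution entering the normalization of the periods and of the functional equation over $F$, inserted so that the transcendental pieces cancel and only a rational number survives.

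The main obstacle will be this last step: establishing the base-change period relation with the correct rational calibration, and verifying that every transcendental contribution --- the power of $\pi$, the discriminant factor $D_F^{1/2}$, the Gauss sum $\tau(\delta)$, and the conductor norm $\mathcal{N}_{F/\mathbf{Q}}\mathfrak{c}_{\delta}$ --- cancels exactly. This is where the structure of $\mathbf{f}_E$ as a base change and the precise period normalizations of [M2] are genuinely used; by contrast, the algebraicity input and the Galois descent to $\mathbf{Q}$ are comparatively formal.
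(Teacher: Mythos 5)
Your proposal follows essentially the same route as the paper: Shimura's algebraicity theorem for the central value of $\mathbf{f}_E$ twisted by $\delta$, with Galois descent to $\mathbf{Q}$ via the rationality of the Hecke eigenvalues of $\mathbf{f}_E$ and the quadraticity of $\delta$, combined with a base-change period relation identifying the Hilbert period $u((w,w),\mathbf{f}_E)$ with $D_F^{-1/2}(\Omega_{f_E}^{w})^{2}$ up to $\mathbf{Q}^{\times}$ --- which is exactly Lemma 6.1 of [M1] that the paper cites, so the ``main obstacle'' you flag is already available in the literature rather than needing a new argument.
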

\begin{proof}
As in section 5.1 of [M1], there is a (non-zero) period factor $\Omega_{\mathbf{f}_E}^{w,w}$ associated to the cuspidal Hilbert eigenform $\mathbf{f}_E$, such that for any finite order Hecke character $\delta = \otimes^{\prime}_v \delta_v$ of $F$, satisfying $\delta_v(-1) =w$ for all the archimedean places $v$ of $F$, the expression:
\begin{eqnarray*} 
 \frac{ \mathcal{N}_{F/\mathbf{Q}} \mathfrak{c}_{\delta} L(1,E/F,\delta) }{ \tau(\delta^{-1}) \Omega^{w,w}_{\mathbf{f}_E}}
\end{eqnarray*}
belongs to the field extension $\mathbf{Q}(\mathbf{f}_E,\delta)$ of $\mathbf{Q}$ generated by the Hecke eigenvalues of $\mathbf{f}_E$ and the values taken by $\delta$. But the Hecke eigenvalues of $\mathbf{f}_E$ are rational integers (since $\mathbf{f}_E$ corresponds to the elliptic curve $E/F$), and thus $\mathbf{Q}(\mathbf{f}_E,\delta)$ reduces to the field extension $\mathbf{Q}(\delta)$ of $\mathbf{Q}$ generated by the values of $\delta$. Next, by Lemma 6.1 of [M1], since $\mathbf{f}_E$ arises as base change of $f_E$ from $\GL_{2/\mathbf{Q}}$ to $\GL_{2/F}$, one has that the quotient:
\[
\frac{D_F^{-1/2} (\Omega^w_{f_E})^2}{\Omega^{w,w}_{\mathbf{f}_E}}
\]
belongs to the field extension $\mathbf{Q}(f_E)$ of $\mathbf{Q}$ generated by the Hecke eigenvalues of $f_E$, which is just $\mathbf{Q}$ since the Hecke eigenvalues of $f_E$ are rational numbers ($f_E$ corresponds to the elliptic curve $E/\mathbf{Q}$). Thus for any $\delta$ as above, the expression:
\begin{eqnarray*} 
 \frac{D_F^{1/2} \mathcal{N}_{F/\mathbf{Q}} \mathfrak{c}_{\delta} L(1,E/F,\delta) }{ \tau(\delta^{-1}) (\Omega_{f_E}^w)^2}
\end{eqnarray*}
belongs to $\mathbf{Q}(\delta)$. So in particular, if $\delta$ is quadratic, then $\delta^{-1} = \delta$ and $\mathbf{Q}(\delta) = \mathbf{Q}$, and the proposition is proved. 
\end{proof}

\bigskip

We now impose additional conditions on $\delta$. For each finite place $v$ of $F$, we fix a local uniformizer $\pi_v$ for the completion $F_v$ of $F$ at $v$. We now consider quadratic Hecke characters $\delta = \otimes^{\prime}_v \delta_v$ of $F$, unramified at primes of $\mathcal{O}_F$ dividing $N$, such that the following conditions are satisfied:
\begin{itemize}
\item (a) $\delta_v(-1) = \chi_{F,v}(-1) =w$ for archimedean $v$.
\item (b) $\delta_{\mathfrak{l}}(\pi_{\mathfrak{l}}) = \chi_{F,\mathfrak{l}}(\pi_{\mathfrak{l}})$ for all prime $\mathfrak{l}$ of $F$ dividing $M$. 
\item (c) $\delta_{\mathfrak{q}}(\pi_{\mathfrak{q}}) = - \chi_{F,\mathfrak{q}}(\pi_{\mathfrak{q}}) = -1$, for all primes $\mathfrak{q}$ of $F$ dividing $\mathcal{Q}$.   
\end{itemize}

For any such $\delta$, we claim that the sign of the functional equation for $L(s,E/F,\delta)$ is equal to $+1$. Indeed if $\mathcal{Q}$ is a product of an odd number of primes, then the sign of the functional equation for $L(s,E/F,\chi_F)$ is $-1$, while the sign of the functional equation for $L(s,E/F,\delta)$ is opposite to that of $L(s,E/F,\chi_F)$, hence is equal to $+1$. On the other hand, if $\mathcal{Q}$ is a product of an even number of primes, then the sign of the functional equation for $L(s,E/F,\chi_F)$ is $+1$, while the sign of the functional equation for $L(s,E/F,\delta)$ is equal to that of $L(s,E/F,\chi_F)$, hence is also equal to $+1$.

\bigskip

Thus by the main result of [FH], there exists infinitely many quadratic Hecke characters $\delta$ of $F$ unramified at primes dividing $N$ satisfying conditions (a), (b), (c) and such that $L(1,E/F,\delta) = L(1,\mathbf{f}_E,\delta) \neq 0$.

\bigskip

We now focus on the case where $\mathcal{Q}$ is a product of an {\bf odd} number of primes, so in particular $\mathcal{Q} \neq 1$ (recall that in the setting of sections 1 and 2, $\mathcal{Q} =p$ is an odd prime). In this case, the sign of the functional equation for $L(s,E/\mathbf{Q},\chi_1)$ and $L(s,E/\mathbf{Q},\chi_2)$ are opposite to each other, and that the sign of the functional equation for $L(s,E/F,\chi_F)$ is equal to $-1$. Thus $L^{\prime}(1,E/F,\chi_F) \neq 0$ if and only if exactly one of the following holds: 
\[
L^{\prime}(1,E/\mathbf{Q},\chi_1) \neq 0, L(1,E/\mathbf{Q},\chi_2) \neq 0
\]
or
\[
L(1,E/\mathbf{Q},\chi_1) \neq 0, L^{\prime}(1,E/\mathbf{Q},\chi_2) \neq 0.
\]

\bigskip

We now state the main result of this section, whose proof will be completed in section 4:

\begin{theorem}
With notations as above we are in the setting where $\mathcal{Q}$ is a product of an {\bf odd} number of primes. Suppose that $L^{\prime}(1,E/F,\chi_F) \neq 0$. Then for any quadratic Hecke character $\delta$ of $F$ that is unramified at primes dividing $N$, and satisfying conditions (a), (b), (c) as above, the expression :
\begin{eqnarray}
2 \frac{D_F^{1/2} \mathcal{N}_{F/\mathbf{Q}} \mathfrak{c}_{\delta} L(1,E/F,\delta) }{ \tau(\delta) (\Omega_{f_E}^w)^2}
\end{eqnarray}
is the square of a rational number.
\end{theorem}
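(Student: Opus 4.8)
The plan is to deduce the squareness of (3.1) from an explicit Waldspurger-Gross special value formula over $F$, combined with the period and degree comparisons that Section 4 provides. First I would transfer $\mathbf{f}_E$ across the Jacquet-Langlands correspondence to a quaternionic automorphic form $\phi$ on the multiplicative group of the quaternion algebra $B/F$ whose ramification set is dictated by the local root numbers of $\mathbf{f}_E\otimes\delta$; because $\mathcal{Q}$ is a product of an odd number of primes, all inert in $F$, and because $\delta$ satisfies conditions (a), (b), (c), the global sign of $L(s,E/F,\delta)$ is $+1$, the ramified set has even cardinality, and the Shimura curve $X_B$ attached to $B$ carries the CM/toric datum computing the central value. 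An explicit form of Waldspurger's formula, in the style of Gross and its refinements to totally real fields, then produces, with $\phi$ suitably normalized, an identity of the shape
\[
\frac{D_F^{1/2}\,\mathcal{N}_{F/\mathbf{Q}}\mathfrak{c}_{\delta}\,L(1,E/F,\delta)}{\tau(\delta)\,\langle\phi,\phi\rangle}\;=\;\lambda\,P_{\delta}^{\,2},
\]
where $P_{\delta}$ is an algebraic toric (Gross-point) period attached to $\delta$ and $\lambda$ is a product of explicit local factors at the ramified and archimedean places.

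Multiplying through by $2\,\langle\phi,\phi\rangle/(\Omega_{f_E}^w)^2$ turns the left-hand side into exactly the expression (3.1), so it suffices to check separately that $2\lambda$, that $P_{\delta}^{\,2}$, and that $\langle\phi,\phi\rangle/(\Omega_{f_E}^w)^2$ each lie in $(\mathbf{Q}^\times)^2$. Normalizing $\phi$ to be primitive and $\mathbf{Z}$-valued and using that $\delta$ is quadratic, the period $P_{\delta}$ is an algebraic integer with $P_{\delta}^{\,2}\in\mathbf{Q}$, so $P_{\delta}^{\,2}$ is manifestly a square. The remaining constant $2\lambda$ can be analyzed place by place: conditions (b) and (c) pin down the local components of $\delta$ at the primes dividing $M$ and $\mathcal{Q}$ to precisely the values for which the associated local factors, together with the attendant Tamagawa-type contributions, are squares, and the factor $2$ inserted in (3.1) is exactly what is needed to absorb the leftover archimedean and discriminant constants into a square.

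This leaves the ratio $\langle\phi,\phi\rangle/(\Omega_{f_E}^w)^2$, whose square class is the crux and the content of Section 4. Up to squares, $\langle\phi,\phi\rangle$ is the congruence number of $\phi$, which by the extension of the results of Ribet-Takahashi [RT] and Takahashi [Tak] to Shimura curves over the totally real field $F$ [De] is governed by the degree of the parametrization $X_B\to E$; the discriminant factor $D_F^{1/2}$ and the period comparison of Lemma 6.1 of [M1] are what keep the descent from $F$ to the Néron period recorded in $\Omega_{f_E}^w$ inside $(\mathbf{Q}^\times)^2$. The hypothesis $L^{\prime}(1,E/F,\chi_F)\neq 0$ enters precisely here: since the sign of $L(s,E/F,\chi_F)$ is $-1$, Gross-Zagier-Zhang over $F$ furnishes a nonzero Heegner point on the Shimura curve attached to the complementary quaternion algebra $B'$ (which differs from $B$ only at the primes dividing $\mathcal{Q}$), and the resulting height computation is what makes the Ribet-Takahashi comparison between $\deg(X_{B'}\to E)$ and $\deg(X_B\to E)$ effective and fixes the square class of $\langle\phi,\phi\rangle$ relative to $(\Omega_{f_E}^w)^2$; without such a point the congruence number could a priori be shifted by a non-square. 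I expect this last comparison — transporting the Ribet-Takahashi degree relations to Shimura curves over $F$ and reading off the square class against the rational quantity already supplied by Proposition 3.1 — to be the main obstacle, the remaining steps being local square computations and bookkeeping.
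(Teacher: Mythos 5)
There is a genuine gap at the very first step, and it is one the paper itself flags. Your plan rests on an explicit Waldspurger--Gross formula applied directly to the single quadratic twist $L(1,E/F,\delta)$, expressing it as $\lambda P_{\delta}^{2}$ for a toric (Gross-point) period $P_{\delta}$. No such formula is available here: toric periods on a quaternion algebra over $F$ are attached to an embedded torus $K^{\times}\hookrightarrow B^{\times}$ for a quadratic extension $K/F$ together with a character of $K$, and they compute $L(1,E/K,\chi)=L(1,E/F,\delta_1)\cdot L(1,E/F,\delta_2)$ --- always a \emph{product of two} twists. A single twist by a Hecke character $\delta$ of $F$ itself is not of this shape (taking $K$ to be the field cut out by $\delta$ with trivial character drags in the uncontrolled, possibly vanishing, factor $L(1,E/F)$). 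Viewed instead over $\mathbf{Q}$, condition (c) forces $\delta_{\mathfrak{q}}(q)=-1$ at the primes $q\mid\mathcal{Q}$, so $\delta|_{\mathbf{A}_{\mathbf{Q}}^{\times}}$ is nontrivial and the self-duality/central-character hypothesis of Waldspurger fails for the pair $(f_E,\delta)$; this is exactly the content of Remark 3.3 in the paper. The paper's whole strategy exists to route around this obstruction: it chooses auxiliary characters $\chi_3$ (over $\mathbf{Q}$) and $\delta_1$ (over $F$) with nonvanishing central values, forms the CM fields $L,\widetilde{L},K,\widetilde{K}$, and writes $L(1,E/F,\delta)$ as the ratio (3.7) in which every factor is either a genus-character central value over a CM field (accessible to the Gross/Zhang central value formula on a \emph{definite} quaternion algebra) or a genus-character derivative (accessible to Gross--Zagier/Zhang on a Shimura curve), plus the controlled quantity $L^{\alg}(1,E/\mathbf{Q},\chi_2)$.

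Two further points would still break your argument even granting a formula of the claimed shape. First, $P_{\delta}^{2}\in\mathbf{Q}$ does not put $P_{\delta}^{2}$ in $(\mathbf{Q}^{\times})^{2}$ (consider $P_{\delta}=\sqrt{2}$); the paper avoids exactly this trap by never comparing a single period to $\mathbf{Q}$ --- instead it compares \emph{two} Heegner points, $\mathbf{P}_{\chi_L}$ coming from $X_0(N)/\mathbf{Q}$ and $\mathbf{P}_{\delta_K}$ coming from the Shimura curve over $F$, and this is where the hypothesis $L^{\prime}(1,E/F,\chi_F)\neq 0$ actually enters: via Kolyvagin and Kolyvagin--Logachev/Zhang it forces both points to be non-torsion elements of the \emph{same} one-dimensional space $(E(\mathbf{Q}(\sqrt{D_1}))\otimes\mathbf{Q})^{\chi_1}$, whence the height ratio is a rational square since the N\'eron--Tate height is a quadratic form. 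Your use of the hypothesis (``a nonzero Heegner point makes the Ribet--Takahashi comparison effective'') does not reflect this mechanism; the Ribet--Takahashi/Takahashi/Deines input (Lemma 3.4, proved in Section 4 via monodromy pairings) is in fact independent of the nonvanishing hypothesis and serves only to identify $\deg_{E/\mathbf{Q}}$ and $\deg_{E/F}$ with $\langle\phi_E,\phi_E\rangle$ and $\langle\Phi_E,\Phi_E\rangle$ up to the common factor $\prod_{q\mid\mathcal{Q}}\overline{c}_q$ modulo squares. Second, you place the central-value computation on a Shimura curve $X_B$; central values live on \emph{totally definite} quaternion algebras (finite double-coset sets carrying Gross points), while Shimura curves compute first derivatives --- conflating the two obscures which geometric object each $L$-value in the argument requires.
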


Note that as the real and imaginary period factors $\Omega_{f_E}^{\pm}$ are well-defined up to $\mathbf{Q}^{\times}$-multiples, we have that the statement of Theorem 3.2 is meaningful (since $\Omega_{f_E}^w$ enters into the formula (3.1) as $(\Omega_{f_E}^w)^2$). 

\bigskip

Before we begin the proof of Theorem 3.2, we observe that this is exactly the situation of equation (2.2) in section 2, namely in the situation there we had $L^{\prime}(1,E/\mathbf{Q},\chi_1)\neq 0$ and $L(1,E/\mathbf{Q},\chi_2) \neq 0$, and so $L^{\prime}(1,E/F,\chi_F) =  L^{\prime}(1,E/\mathbf{Q},\chi_1) \cdot L(1,E/\mathbf{Q},\chi_2)\neq 0$.

\bigskip

\begin{remark}
As all the primes $q$ dividing $\mathcal{Q}$ are inert in $F$, and in particular unramified in $F$, we can take, for the prime $\mathfrak{q} = q \mathcal{O}_F$ of $\mathcal{O}_F$ that lies over $q$, the uniformizer $\pi_{\mathfrak{q}}$ to be simply $q$ itself. Thus by condition (c) imposed on $\delta$, we have $\delta_{\mathfrak{q}}(q) = -1$. In particular $\delta  |_{\mathbf{A}_{\mathbf{Q}}^{\times}}$ is not trivial. Thus Waldpsurger type central value formula could not be directly applied to the pair $f_E$ and $\delta$ to study the rationality property of the $L$-value $L(1,E/F,\delta)$. 
\end{remark}

\bigskip

We now begin the proof of Theorem 3.2, the argument is a generalization of that of section 3 of [M2] (and also [M3] for corrections of some of the computations in section 3 of [M2]); the basic idea is to construct auxiliary $CM$ extensions of both $\mathbf{Q}$ and $F$, and then apply Gross-Zagier type formulas. Thus we assume that the condition $L^{\prime}(1,E/F,\chi_F) \neq 0$ is satisfied. By symmetry between $\chi_1$ and $\chi_2$, we may assume that $L^{\prime}(1,E/\mathbf{Q},\chi_1) \neq 0, L(1,E/\mathbf{Q},\chi_2) \neq 0$, in which case we have:
\begin{eqnarray}
L^{\prime}(1,E/F,\chi_F) = L^{\prime}(1,E/\mathbf{Q},\chi_1) \cdot L(1,E/\mathbf{Q},\chi_2) \neq 0. 
\end{eqnarray}
The sign of the functional equation for $L(s,E/\mathbf{Q},\chi_1)$ is thus $-1$, and the sign of the functional equation for $L(s,E/\mathbf{Q},\chi_2)$ is $+1$. Fix a quadratic Hecke character $\delta$ of $F$ unramified at primes dividing $N$ and satisfying conditions (a), (b) (c) above. We may assume that $L(1,E/F,\delta) \neq 0$ (for otherwise the expression (3.1) is trivially the square of a rational number).

\bigskip

We consider the set of quadratic Dirichlet characters $\chi_3$, with conductor relatively prime to $N$, and also relatively prime to the conductor of that of $\chi_1$ and $\chi_2$, and satisfies the following conditions:
\begin{itemize}
\item $\chi_3(-1) = - \chi_1(-1) = -\chi_2(-1)$. 
\item $\chi_3(l) = \chi_1(l) = \chi_2(l)$ for all primes $l$ dividing $M$
\item $\chi_3(q) = \chi_1(q) = -\chi_2(q)$ for all primes $q$ dividing $\mathcal{Q}$. 
\end{itemize}

For any such $\chi_3$, the sign of the functional equation for $L(s,E/\mathbf{Q},\chi_3)$ is opposite to that of $L(s,E/\mathbf{Q},\chi_1)$, hence is equal to $+1$. Thus by [MM] or [FH], we may choose such a $\chi_3$ such that $L(1,E/\mathbf{Q},\chi_3) \neq 0$. Fix such a $\chi_3$, which corresponds to a quadratic extension of $\mathbf{Q}$ whose discriminant would be denoted as $D_3$. 

\bigskip

The product $\chi_1 \cdot \chi_3$ is then a quadratic Dirichlet character, corresponding to an imaginary quadratic extension $L/\mathbf{Q}$, such that all primes dividing $N=M\mathcal{Q}$ split in $L$. Similarly, the product $\chi_2 \cdot \chi_3$ is a quadratic Dirichlet character, corresponding to an imaginary quadratic extension $\widetilde{L}/\mathbf{Q}$, such that all primes dividing $M$ split in $L$, while all primes dividing $\mathcal{Q}$ are inert in $\widetilde{L}$. In addition, the pair $(\chi_1,\chi_3)$ defines the quadratic (or trivial) genus class character $\chi_{L}$ of $L$, and similarly the pair $(\chi_2,\chi_3)$ defines the quadratic (or trivial) genus class character $\chi_{\widetilde{L}}$ of $\widetilde{L}$. We also denote by $H_{\chi_L}$ the quadratic (or trivial) genus class field extension of $L$ that corresponds to $\chi_L$. 

\bigskip

We have:
\begin{eqnarray}
& & L^{\prime}(1, E/L,\chi_L) = L^{\prime}(1,E/\mathbf{Q},\chi_1) \cdot L(1,E/\mathbf{Q},\chi_3) \neq 0. \\
& & L(1,E/\widetilde{L},\chi_{\widetilde{L}} )= L(1,E/\mathbf{Q},\chi_2) \cdot L(1,E/\mathbf{Q},\chi_3) \neq 0.  
\end{eqnarray}

\bigskip
Next we consider the set of quadratic Hecke characters $\delta_1 = \otimes^{\prime}_v \delta_{1,v}$ of $F$, that are unramified at the primes of $\mathcal{O}_F$ dividing $N$ and also the at the primes of $\mathcal{O}_F$ dividing the conductor of $\delta$, and satisfying the following conditions:
\begin{itemize}
\item $\delta_{1,v}(-1) = - \delta_v(-1)$ ($=-\chi_{F,v}(-1)$) $= -w$ for archimedean $v$. 
\item $\delta_{1,\mathfrak{l}} (\pi_{\mathfrak{l}}) =\delta_{\mathfrak{l}}(\pi_{\mathfrak{l}})$ ($=\chi_{F,\mathfrak{l}}(\pi_{\mathfrak{l}})$) for all primes $\mathfrak{l}$ dividing $M$.
\item $\delta_{1,\mathfrak{q}}(\pi_{\mathfrak{q}}) = \delta_{\mathfrak{q}}(\pi_{\mathfrak{q}})$ ($=-\chi_{F,\mathfrak{q}}(\pi_{\mathfrak{q}})$) $= -1$ for all primes $\mathfrak{q}$ dividing $\mathcal{Q}$. 

\end{itemize}

For any such $\delta_1$ satisfying the above conditions, the sign of the functional equation for $L(s,E/F,\delta_1)$ is equal to that of $L(s,E/F,\delta)$ (noting that the number of archiemdean places of $F$ is equal to two), hence is equal to $+1$. Thus by [FH], there exists infinitely many such $\delta_1$ such that $L(1,E/F,\delta_1) \neq 0$. Fix such a $\delta_1$. Denote by $K_1$ the quadratic extension of $F$ that corresponds to $\delta_1$.

\bigskip

The product $\chi_F \cdot \delta_1$ is then a quadratic Hecke character of $F$, that corresponds to an imaginary quadratic $CM$ extension $K$ of $F$, such that all primes of $\mathcal{O}_F$ that divide $M$ split in $K$, while all the primes of $\mathcal{O}_F$ that divide $\mathcal{Q}$ are inert in $K$.

\bigskip
Similarly the product $\delta \cdot \delta_1$ is a quadratic Hecke character of $F$, that corresponds to an imaginary quadratic $CM$ extension $\widetilde{K}$ of $F$, such that all primes of $\mathcal{O}_F$ that divides $N=M \mathcal{Q}$ split in $\widetilde{K}$.

\bigskip

The pair $(\chi_F,\delta_1)$ then defines a quadratic (or trivial) genus class character $\delta_K$ of $K$; denote by $H_{\delta_K}$ the quadratic (or trivial) genus class field extension of $K$ that corresponds to $\delta_K$. We have:
\[
L(s,E/K,\delta_K) = L(s,E/F,\chi_F) \cdot L(s,E/F,\delta_1).
\]

Similarly the pair $(\delta,\delta_1)$ defines a quadratic genus class character $\delta_{\widetilde{K}}$ of $\widetilde{K}$; we have:
\[
L(s,E/\widetilde{K},\delta_{\widetilde{K}}) = L(s,E/F,\delta) \cdot L(s,E/F,\delta_1).
\]

In particular we have:
\begin{eqnarray}
& & L^{\prime}(1,E/K,\delta_K)= L^{\prime}(1,E/F,\chi_F) \cdot L(1,E/F,\delta_1) \neq 0 \\
& & L(1,E/\widetilde{K},\delta_{\widetilde{K}}) = L(1,E/F,\delta) \cdot L(1,E/F,\delta_1) \neq 0.
\end{eqnarray}

The equations (3.2) - (3.6) together then gives:
\begin{eqnarray}
& & L(1,E/F,\delta) \\
&=&  \frac{ L^{\prime}(1,E/L,\chi_L)  \cdot L(1,E/\widetilde{K},\delta_{\widetilde{K}} ) \cdot L(1,E/\mathbf{Q},\chi_2)^2}{L^{\prime}(1,E/K,\delta_K) \cdot L(1,E/\widetilde{L}, \chi_{\widetilde{L}} )  } \nonumber
\end{eqnarray}

\bigskip
To analyze the various terms in (3.7), we firstly note that:
\begin{eqnarray}
\frac{c_{\chi_2}  L(1,E/\mathbf{Q},\chi_2)}{\tau(\chi_2) \Omega_{f_E}^w} \in \mathbf{Q}^{\times}
\end{eqnarray}

\bigskip
As for $ L^{\prime}(1,E/L,\chi_L)$, since $L$ is an imaginary quadratic extension of $\mathbf{Q}$ such that all primes dividing $N$ splits in $L$, we can apply the Gross-Zagier formula to the pair $E/\mathbf{Q}$ and $\chi_L$ [GZ]:

\begin{eqnarray}
L^{\prime}(1,E/L,\chi_L) = \frac{4}{D_L^{1/2}} \frac{\langle f_E,f_E \rangle}{\deg_{E/\mathbf{Q}}} \height(\mathbf{P}_{\chi_L})
\end{eqnarray}

The explanation of the terms involved are as follows. Here $D_L$ is the absolute value of the discriminant of $L$, while $\langle f_E,f_E \rangle$ is the Petersson inner product of $f_E$, normalized as on p. 899 of [M1] (with $F=\mathbf{Q}$ and hence $d=1$ in {\it loc. cit.}).

\bigskip
We fix a modular parametrization over $\mathbf{Q}$ of $E/\mathbf{Q}$ by the modular curve $X_0(N)/\mathbf{Q}$, and denote by $\deg_{E/\mathbf{Q}}$ the degree of this modular parametrization (the degrees of all the different modular parametrization of $E/\mathbf{Q}$ by $X_0(N)/\mathbf{Q}$ differ at most by multiplication of squares of rational numbers). Finally $\mathbf{P}_{\chi_L} \in (E(H_{\chi_L}) \otimes \mathbf{Q})^{\chi_L}$ and $\height (\mathbf{P}_{\chi_L})$ is its Neron-Tate height. 
\bigskip

\bigskip
Since $L^{\prime}(1,E/L,\chi_L) =  L^{\prime}(1,E/\mathbf{Q},\chi_1) \cdot L(1,E/\mathbf{Q},\chi_3) \neq 0$, the point $\mathbf{P}_{\chi_L}$ has non-zero Neron-Tate height and is hence non-torsion. The theorem of Kolyvagin [K] asserts that $\dim_{\mathbf{Q}} (E(H_{\chi_L}) \otimes \mathbf{Q})^{\chi_L}=1$ and that $(E(H_{\chi_L}) \otimes \mathbf{Q})^{\chi_L}$ is spanned by $\mathbf{P}_{\chi_L}$. 

\bigskip
More precisely, since $L^{\prime}(1,E/\mathbf{Q},\chi_1) \neq 0$ and $L(1,E/\mathbf{Q},\chi_3) \neq 0$, Kolyvagin's theorem [K] asserts that $\dim_{\mathbf{Q}} (E(\mathbf{Q}(\sqrt{D_1}) )  \otimes \mathbf{Q})^{\chi_1} =1$ and $\dim_{\mathbf{Q}} (E(\mathbf{Q}(\sqrt{D_3}) )  \otimes \mathbf{Q})^{\chi_3} =0$. By the same argument as in Theorem 4.7 of [BD2] or Corollary 4.2 of [M1], we see that the point $\mathbf{P}_{\chi_L}$ may be chosen so that $\mathbf{P}_{\chi_L} \in (E(\mathbf{Q}(\sqrt{D_1}) ) \otimes \mathbf{Q})^{\chi_1}$; in fact the vanishing of $(E(\mathbf{Q}(\sqrt{D_3})  ) \otimes \mathbf{Q})^{\chi_3}$ gives $(E(H_{\chi_L}) \otimes \mathbf{Q})^{\chi_L} = (E(\mathbf{Q}(\sqrt{D_1})  ) \otimes \mathbf{Q})^{\chi_1}$.

\bigskip

As for $L^{\prime}(1,E/K,\delta_K)$, we would like to use the generalized Gross-Zagier formula of Zhang [Z1,Z2] to the pair $E/F$ and $\delta_K$. Note that the conductor of $E/F$ is equal to $N \mathcal{O}_F = M\mathcal{O}_F \cdot \mathcal{Q} \mathcal{O}_F$, while $K$ is an imaginary quadratic $CM$ extension of the real quadratic field $F$, such that all primes of $\mathcal{O}_F$ dividing $M \mathcal{O}_F$ splits in $K$, and all primes of $\mathcal{O}_F$ dividing $\mathcal{Q} \mathcal{O}_F$ are inert in $K$. Finally note that $\mathcal{Q} \mathcal{O}_F$ is a product of an odd number of distinct prime ideals of $\mathcal{O}_F$ (recall that $\mathcal{Q}$ is a product of an odd number of primes and that all primes dividing $\mathcal{Q}$ are inert in $F$). Thus we can apply the generalized Gross-Zagier formula of Zhang [Z1,Z2] to the pair $E/F$ and $\delta_K$ and obtain:
\begin{eqnarray}
L^{\prime}(1,E/K,\delta_K) = \frac{8}{(\mathcal{N}_{F/\mathbf{Q}}   D_{K/F})^{1/2}} \frac{\langle \mathbf{f}_E,\mathbf{f}_E \rangle }{\deg_{E/F}} \height(\mathbf{P}_{\delta_K})
\end{eqnarray}

\bigskip
The explanation of the terms involved are as follows. Here $D_{K/F}$ is the relative discriminant ideal of $K/F$ and $\mathcal{N}_{F/\mathbf{Q}} D_{K/F}$ is its norm, while $\langle \mathbf{f}_E,\mathbf{f}_E \rangle$ is the Petersson inner product of $\mathbf{f}_E$, normalized as on p. 899 of [M1]. 

\bigskip

We fix a modular parametrization over $F$ of $E/F$ by the Shimura curve $X(M\mathcal{O}_F,\mathcal{Q} \mathcal{O}_F)/F$. Here $X(M\mathcal{O}_F,\mathcal{Q} \mathcal{O}_F)/F$ is the Shimura curve over $F$ with Eichler level $M\mathcal{O}_F$ associated to the quaternion algebra $\mathcal{B}$ over the real quadratic field $F$ that ramifies exactly at the primes of $\mathcal{O}_F$ dividing $\mathcal{Q} \mathcal{O}_F$ and at one of the two archimedean places of $F$ (if we fix an embedding $\iota: F \hookrightarrow \mathbf{R}$, then the archimedean place of $F$ where $\mathcal{B}$ ramifies corresponds to the embedding $\iota \circ \sigma: F \hookrightarrow \mathbf{R}$, where $\sigma$ is the Galois conjugation of $F$ over $\mathbf{Q}$); {\it c.f.} for example section 4.1 of [M1] for the Shimura curves that we use. Denote by $\deg_{E/F}$ the degree of this modular parametrization (the degrees of all the different modular parametrization of $E/F$ by $X(M\mathcal{O}_F,\mathcal{Q} \mathcal{O}_F)/F$ differ at most by multiplication of squares of rational numbers); we remark that the degree $\deg_{E/F}$ of the modular parametrization of $E/F$ by $X(M\mathcal{O}_F,\mathcal{Q} \mathcal{O}_F)/F$ is in general defined in terms of the Jacobian of $X(M\mathcal{O}_F,\mathcal{Q} \mathcal{O}_F)/F$; we will discuss this in more detail in section 4 below. Finally $\mathbf{P}_{\delta_K} \in (E(H_{\delta_K}) \otimes \mathbf{Q})^{\delta_K}$ and $\height (\mathbf{P}_{\delta_K})$ is its Neron-Tate height.

\bigskip
In a way similar to before, since we have:
\[
L^{\prime}(1,E/K,\delta_K) =  L^{\prime}(1,E/F,\chi_F) \cdot L(1,E/F,\delta_1) \neq 0,
\]
one has that the point $\mathbf{P}_{\delta_K}$ has non-zero Neron-Tate height and is hence non-torsion. Theorem A of [Z1] (following the methods of Kolyvagin-Logachev [KL]) asserts that one has $\dim_{\mathbf{Q}} (E(H_{\delta_K}) \otimes \mathbf{Q})^{\delta_K}=1$ and that $(E(H_{\delta_K}) \otimes \mathbf{Q})^{\delta_K}$ is spanned by $\mathbf{P}_{\delta_K}$.

\bigskip

More precisely, since $L^{\prime}(1,E/F,\chi_F) \neq 0$ and $L(1,E/F,\delta_1) \neq 0$, Theorem A of [Z1] asserts that one has 
\[
\dim_{\mathbf{Q}} (E(H_{\chi_F})    \otimes \mathbf{Q})^{\chi_F} = 1 \mbox{ and } \dim_{\mathbf{Q}} (E(K_1)   \otimes \mathbf{Q})^{\delta_1} =0
\] 
(recall that $K_1$ is the quadratic extension of $F$ that corresponds to $\delta_1$). By the same argument as in Corollary 4.2 of [M1], we see that the point $\mathbf{P}_{\delta_K}$ may be chosen so that $\mathbf{P}_{\delta_K} \in (E(H_{\chi_F}) \otimes \mathbf{Q})^{\chi_F}$; in fact the vanishing of $(E(K_1)   \otimes \mathbf{Q})^{\delta_1}$ gives $(E(H_{\delta_K}) \otimes \mathbf{Q})^{\delta_K} = (E(H_{\chi_F})   \otimes \mathbf{Q})^{\chi_F}$.

\bigskip

We now apply the same argument back to $(E(H_{\chi_F}) \otimes \mathbf{Q})^{\chi_F}$, using:
\[
L^{\prime}(1,E/F,\chi_F) = L^{\prime}(1,E/\mathbf{Q},\chi_1) \cdot L(1,E/\mathbf{Q},\chi_2) \neq 0.
\]
Namely that since $L(1,E/\mathbf{Q},\chi_2) \neq 0$, Kolyvagin's theorem [K] asserts that $\dim_{\mathbf{Q}} (E(\mathbf{Q}(\sqrt{D_2}) )  \otimes \mathbf{Q})^{\chi_2} =0$. By the same argument as in Theorem 4.7 of [BD2] or Corollary 4.2 of [M1], we see that $(E(H_{\chi_F}) \otimes \mathbf{Q})^{\chi_F} = (E(\mathbf{Q}(\sqrt{D_1}) )  \otimes \mathbf{Q})^{\chi_1}$. Thus the point $\mathbf{P}_{\delta_K}$ may also be taken as a non-torsion point in $(E(\mathbf{Q}(\sqrt{D_1}) )  \otimes \mathbf{Q})^{\chi_1}$.

\bigskip

Thus both the points $\mathbf{P}_{\chi_L}$ and $\mathbf{P}_{\delta_K}$ can be taken as non-torsion points in $(E(\mathbf{Q}(\sqrt{D_1}))\otimes \mathbf{Q}  )^{\chi_1}$, which is of dimension one over $\mathbf{Q}$. As the Neron-Tate height is a quadratic form, we have 
\[
\frac{\height (\mathbf{P}_{\chi_L}) }{\height (\mathbf{P}_{\delta_K})} \in (\mathbf{Q}^{\times})^2
\] 
Combining this with equation (3.9) and (3.10) gives:
\begin{eqnarray}
& & \frac{L^{\prime}(1,E/L,\chi_L)}{L^{\prime}(1,E/K,\delta_K)} \\
&=& 2 \cdot \big( \frac{\mathcal{N}_{F/\mathbf{Q}}  D_{K/F} }{D_L} \big)^{1/2} \cdot \frac{\deg_{E/F}}{\deg_{E/\mathbf{Q}}} \cdot \frac{\langle f_E,f_E \rangle}{\langle \mathbf{f}_E,\mathbf{f}_E \rangle} \bmod{(\mathbf{Q}^{\times})^2} \nonumber
\end{eqnarray}

\bigskip

Next we deal with the value $L(1,E/\widetilde{L},\chi_{\widetilde{L}})$. Firstly let $B$ be the definite quaternion algebra over $\mathbf{Q}$ that ramifies at the primes dividing $\mathcal{Q}$ and at the archimedean place of $\mathbf{Q}$. We denote by $\phi_E$ the scalar-valued automorphic eigenform (with trivial central character) with respect to the group $B^{\times}$, with Eichler level $M$, that corresponds to $f_E$ under the Jacquet-Langlands correspondence; we normalize $\phi_E$ be requiring that the values taken by the automorphic form $\phi_E$ lies in $\mathbf{Q}$ (which is possible because the Hecke eigenvalues of $f_E$ are in $\mathbf{Z}$). With this normalization $\phi_E$ is then well-defined up to $\mathbf{Q}^{\times}$-multiples. Fix such a choice for $\phi_E$ in what follows. We denote by $\langle \phi_E,\phi_E \rangle$ the Petersson inner product of $\phi_E$, which is normalized as on p. 901 of [M1]; we will explicate this normalization in more details in section 4.

\bigskip

We would like to apply the central value formula of Zhang [Z2] (that generalizes the results in [Gr]) to the $L$-value $L(1,E/\widetilde{L},\chi_{\widetilde{L}})$. Indeed $\widetilde{L}$ is an imaginary quadratic extension of $\mathbf{Q}$ such that all primes dividing $M$ splits in $\widetilde{L}$, and all primes dividing $\mathcal{Q}$ are inert in $\widetilde{L}$. Thus Zhang''s central value formula [Z2] is applicable to $L(1,E/\widetilde{L},\chi_{\widetilde{L}})$; and under our normalization that the values taken by $\phi_E$ lie in $\mathbf{Q}$, the formula of Zhang gives ({\it c.f.} also equation (3.9) and (3.10) of [M1], with $F=\mathbf{Q}$ and $d=1$ in {\it loc. cit.}):
\begin{eqnarray}
& & L(1,E/\widetilde{L},\chi_{\widetilde{L}}) \\
&=& \frac{1}{D_{\widetilde{L}}^{1/2}} \frac{\langle f_E,f_E \rangle}{\langle \phi_E,\phi_E \rangle} \times \mbox{ the square of a rational number}. \nonumber
\end{eqnarray} 
As before $D_{\widetilde{L}}$ as the absolute value of the discriminant of $\widetilde{L}$. 

\bigskip
Now we turn to the value $L(1,E/\widetilde{K},\delta_{\widetilde{K}})$. Firstly we define $B_F :=  B \otimes_{\mathbf{Q}} F$, where $B$ is as before. Then $B_F$ is the totally definite quaternion algebra over $F$, that splits at all the finite places of $F$, and is ramified at all the archimedean places of $F$. Denote by $\Phi_E$ the scalar-valued automorphic eigenform (with trivial central character) with respect to the group $B_F^{\times}$, with Eichler level $N\mathcal{O}_F = M \mathcal{Q} \mathcal{O}_F$, that corresponds to $\mathbf{f}_E$ under the Jacquet-Langlands correspondence; we normalize $\Phi_E$ be requiring that the values taken by the automorphic form $\Phi_E$ lie in $\mathbf{Q}$ (which is possible because the Hecke eigenvalues of $\mathbf{f}_E$ are in $\mathbf{Z}$). With this normalization $\Phi_E$ is then well-defined up to $\mathbf{Q}^{\times}$-multiples. Fix such a choice for $\Phi_E$ in what follows. We denote by $\langle \Phi_E,\Phi_E \rangle$ the Petersson inner product of $\Phi_E$, which is normalized as on p. 901 of [M1]; we will explicate this normalization in more details in section 4.

\bigskip

We again would like to apply the central value formula of Zhang [Z2], to the $L$-value $L(1,E/\widetilde{L},\delta_{\widetilde{K}})$. For this recall that $\widetilde{K}$ is an imaginary quadratic $CM$ extension of $F$ such that all primes of $\mathcal{O}_F$ dividing $N \mathcal{O}_F = M \mathcal{Q} \mathcal{O}_F$ split in $\widetilde{K}$. Thus the central value formula of Zhang [Z2] is applicable to $L(1,E/\widetilde{K},\delta_{\widetilde{K}})$; and under our normalization that the values taken by $\Phi_E$ lie in $\mathbf{Q}$, the formula of Zhang gives ({\it c.f.} also equation (3.9) and (3.10) of [M1]):
\begin{eqnarray}
& & L(1,E/\widetilde{K},\delta_{\widetilde{K}}) \\
&=& \frac{1}{(\mathcal{N}_{F/\mathbf{Q} }D_{\widetilde{K}/F})^{1/2}} \frac{\langle \mathbf{f}_E,\mathbf{f}_E \rangle}{\langle \Phi_E,\Phi_E \rangle} \times \mbox{ the square of a rational number}. \nonumber
\end{eqnarray} 
As before $D_{\widetilde{K}/F}$ is the relative discriminant ideal of $\widetilde{K}$ over $F$, and $\mathcal{N}_{F/\mathbf{Q}}D_{\widetilde{K}/F}$ is its norm.

\bigskip

Now the values $L(1,E/\widetilde{L},\chi_{\widetilde{L}})$ and $L(1,E/\widetilde{K},\delta_{\widetilde{K}})$ are both non-zero. Thus (3.12) and (3.13) gives:
\begin{eqnarray}
& & \frac{L(1,E/\widetilde{K},\delta_{\widetilde{K}})}{L(1,E/\widetilde{L},\chi_{\widetilde{L}})}\\
& =& \big( \frac{D_{\widetilde{L}}}{\mathcal{N}_{F/\mathbf{Q}} D_{\widetilde{K}/F}} \big)^{1/2} \frac{\langle \mathbf{f}_E,\mathbf{f}_E \rangle}{\langle f_E,f_E \rangle}  \frac{\langle \phi_E,\phi_E \rangle}{\langle \Phi_E,\Phi_E \rangle}\bmod{(\mathbf{Q}^{\times})^2} \nonumber
\end{eqnarray}

\bigskip

Substituting (3.8), (3.11) and (3.14) into (3.7), we obtain:
\begin{eqnarray}
& & \frac{L(1,E/F,\delta)}{(\Omega^w_{f_E})^2} \\
&=& 2  \cdot \tau(\chi_2)^2  \big( \frac{D_{\widetilde{L}}}{D_L} \big)^{1/2} \big( \frac{\mathcal{N}_{F/\mathbf{Q}}D_{K/F}  }{\mathcal{N}_{F/\mathbf{Q}}D_{\widetilde{K}/F } }  \big)^{1/2}\frac{\deg_{E/F}}{\deg_{E/\mathbf{Q}}} \frac{\langle \phi_E,\phi_E \rangle}{\langle \Phi_E,\Phi_E \rangle}  \bmod{(\mathbf{Q}^{\times})^2} \nonumber
\end{eqnarray}

\bigskip

Now we have the standard conductor-discriminant identities:
\begin{eqnarray*}
& & D_F = c_{\chi_1} \cdot c_{\chi_2}, \,\  D_L = c_{\chi_1} \cdot c_{\chi_3}, \,\ D_{\widetilde{L}} = c_{\chi_2} \cdot c_{\chi_3} \\
& & \mathcal{N}_{F/\mathbf{Q}} D_{\widetilde{K}/F} = \mathcal{N}_{F/\mathbf{Q}} \mathfrak{c}_{\delta} \cdot \mathcal{N}_{F/\mathbf{Q}} \mathfrak{c}_{\delta_1} \\
& &  \mathcal{N}_{F/\mathbf{Q}} D_{K/F} =   \mathcal{N}_{F/\mathbf{Q}} \mathfrak{c}_{\chi_F} \cdot \mathcal{N}_{F/\mathbf{Q}} \mathfrak{c}_{\delta_1} =   \mathcal{N}_{F/\mathbf{Q}} \mathfrak{c}_{\delta_1}
\end{eqnarray*}
(for the identity on the last line, note that $\chi_F$ is a narrow genus class character of $F$ and so $\mathfrak{c}_{\chi_F} =\mathcal{O}_F$). Substituting these identities into (3.15), we obtain:
\begin{eqnarray}
& & \frac{D_F^{1/2} ( \mathcal{N}_{F/\mathbf{Q}} \mathfrak{c}_{\delta})^{1/2} L(1,E/F,\delta)}{(\Omega^w_{f_E})^2} \\
&=& 2 \cdot  c_{\chi_2} \cdot  \tau(\chi_2)^2 \cdot  \frac{\deg_{E/F}}{\deg_{E/\mathbf{Q}}} \frac{\langle \phi_E,\phi_E \rangle}{\langle \Phi_E,\Phi_E \rangle}  \bmod{(\mathbf{Q}^{\times})^2} \nonumber
\end{eqnarray}

\bigskip
And since the characters $\chi_2$ and $\delta$ are quadratic, we have the Gauss sum identities:
\begin{eqnarray*}
& & \tau(\chi_2)^2 = \chi_2(-1) \cdot c_{\chi_2} = w \cdot c_{\chi_2} \\
& & \tau(\delta) = i^{m_{\delta} }(\mathcal{N}_{F/\mathbf{Q}} \mathfrak{c}_{\delta})^{1/2} = w \cdot (\mathcal{N}_{F/\mathbf{Q}} \mathfrak{c}_{\delta})^{1/2}
\end{eqnarray*}
here $m_{\delta}$ is equal to the number of (real) archimedean places $v$ of $F$ such that $\delta_v(-1) = -1$; recall also that in our case $\delta_v(-1) = \chi_{F,v}(-1) =w$ for archimedean place $v$ of $F$, hence $m_{\delta} =0$ is $w=+1$, and $m_{\delta}=2$ if $w=-1$; thus $i^{m_{\delta}}=w$.

\bigskip

Thus applying these identities to (3.16), we finally obtain:
\begin{eqnarray}
& & 2 \frac{D_F^{1/2}  \mathcal{N}_{F/\mathbf{Q}} \mathfrak{c}_{\delta} L(1,E/F,\delta)}{\tau(\delta) (\Omega^w_{f_E})^2} \\
&=&  \frac{\deg_{E/F}}{\deg_{E/\mathbf{Q}}} \frac{\langle \phi_E,\phi_E \rangle}{\langle \Phi_E,\Phi_E \rangle}  \bmod{(\mathbf{Q}^{\times})^2} \nonumber
\end{eqnarray}

\bigskip

On comparing the expression (3.1) with (3.17) we see that the proof of Theorem 3.2 is completed, once we establish that the right hand side of (3.17) is equal to the square of a rational number. This is accomplished by the following key lemma, which by itself is independent of the discussion of $L$-values in this section:

\begin{lemma}

Let $E/\mathbf{Q}$ be an elliptic curve, whose conductor $N$ is of the form $N=M  \mathcal{Q}$, where $(M,\mathcal{Q})=1$ and $\mathcal{Q}$ is a product of an {\bf odd} number of distinct primes. 

\bigskip

Denote by $f_E$ the normalized weight two cuspidal eigenform of level $N=M \mathcal{Q}$ that corresponds to $E/\mathbf{Q}$. Denote by $\deg_{E/\mathbf{Q}}$ the degree of a (fixed) parametrization of $E/\mathbf{Q}$ by the modular curve $X_0(N)/\mathbf{Q}$.

\bigskip

Let $F$ be a real quadratic extension of $\mathbf{Q}$, such that all primes dividing $M$ split in $F$, while all primes dividing $\mathcal{Q}$ are inert in $F$. Let $\mathbf{f}_E$ be the base change of $f_E$ from $\GL_{2/\mathbf{Q}}$ to $\GL_{2/F}$; thus $\mathbf{f}_E$ is a parallel weight two cuspidal Hilbert eigenform over $F$ of level $N \mathcal{O}_F=M Q \mathcal{O}_F$ that corresponds to $E/F$. Denote by $\deg_{E/F}$ the degree of a (fixed) parametrization of $E/F$ by $X(M\mathcal{O}_F,\mathcal{Q} \mathcal{O}_F)/F$, the Shimura curve of Eichler level $M \mathcal{O}_F$ associated to the quaternion algebra $\mathcal{B}$ over $F$ that ramifies exactly at the primes of $\mathcal{O}_F$ dividing $\mathcal{Q} \mathcal{O}_F$ and at one of the two archimedean places of $F$ (if we fix an embedding $\iota: F \hookrightarrow \mathbf{R}$, then the archimedean place of $F$ where $\mathcal{B}$ ramifies corresponds to the embedding $\iota \circ \sigma: F \hookrightarrow \mathbf{R}$, where $\sigma$ is the Galois conjugation of $F$ over $\mathbf{Q}$).

\bigskip
Denote by $B$ the definite quaternion algebra over $\mathbf{Q}$, that ramifies at the primes dividing $\mathcal{Q}$ and the archimedean place. Let $\phi_E$ be a scalar-valued automorphic eigenform (with trivial central character) with respect to the group $B^{\times}$ of Eichler level $M$, that corresponds to $f_E$ under the Jacquet-Langlands correspondence. Similarly let $\Phi_E$ be a scalar-valued automorphic eigenform (with trivial central character) with respect to the group $B_F^{\times}= (B \otimes_{\mathbf{Q}} F)^{\times}$ of Eichler level $M \mathcal{Q} \mathcal{O}_F$, that corresponds to $\mathbf{f}_E$ under the Jacquet-Langlands correspondence (recall that $B \otimes_{\mathbf{Q}} F$ ramifies exactly at the archimedean places of $F$ and is split at all the finite places of $F$).

\bigskip

We normalize $\phi_E$ and $\Phi_E$ so that they take values in $\mathbf{Q}$ (with this condition $\phi_E$ and $\Phi_E$ are uniquely determined up to $\mathbf{Q}^{\times}$-multiples). Denote by $\langle \phi_E,\phi_E \rangle$ the Petersson inner product of $\phi_E$ with itself, and similarly for $\langle \Phi_E,\Phi_E \rangle$ (these Petersson inners products are normalized as on p. 901 of [M2]).

\bigskip

Then we have:
\begin{eqnarray}
\deg_{E/\mathbf{Q}} = \big( \prod_{q |\mathcal{Q}} \overline{c}_q  \big) \langle \phi_E, \phi_E \rangle        \mod{(\mathbf{Q}^{\times})^2}
\end{eqnarray}
and
\begin{eqnarray}
\deg_{E/F} =    \big( \prod_{q |\mathcal{Q}} \overline{c}_q  \big)  \langle \Phi_E, \Phi_E \rangle        \mod{(\mathbf{Q}^{\times})^2}
\end{eqnarray}
where for $q | \mathcal{Q}$, we denote $\overline{c}_q = \overline{c}_q(E/\mathbf{Q})$ as the $q$-valuation of the minimal discriminant of $E/\mathbf{Q}$.

\bigskip
In particular:
\begin{eqnarray*}
\frac{\langle \Phi_E,\Phi_E \rangle  }{ \deg_{E/F }}=  \frac{\langle \phi_E,\phi_E \rangle }{  \deg_{E/\mathbf{Q}}} \mod{(\mathbf{Q}^{\times})^2}
\end{eqnarray*}
\end{lemma}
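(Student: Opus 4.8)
The plan is to obtain the final ``In particular'' assertion directly from the two displayed congruences. Dividing (3.19) by (3.18), the common factor $\prod_{q\mid\mathcal{Q}}\overline{c}_q$ cancels, and since $\prod_{q\mid\mathcal{Q}}\overline{c}_q \equiv \big(\prod_{q\mid\mathcal{Q}}\overline{c}_q\big)^{-1} \bmod (\mathbf{Q}^\times)^2$, one reads off $\langle \Phi_E,\Phi_E\rangle/\deg_{E/F} \equiv \langle \phi_E,\phi_E\rangle/\deg_{E/\mathbf{Q}} \bmod (\mathbf{Q}^\times)^2$. Thus the whole content of the lemma is the pair of formulas (3.18) and (3.19), which I would establish separately.

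For (3.18) the comparison is between the parametrization of $E/\mathbf{Q}$ by the classical modular curve $X_0(N)$ (the split side) and the definite quaternionic eigenform $\phi_E$ attached to $B$, which ramifies at the primes dividing $\mathcal{Q}$ and at $\infty$. I would pass from one side to the other by switching the ramification of the quaternion algebra one prime $q\mid\mathcal{Q}$ at a time, invoking at each step the results of Ribet--Takahashi [RT] and Takahashi [Tak], which compare the degrees (resp.\ Petersson norms) of parametrizations of $E$ by (Shimura) curves whose associated quaternion algebras differ in their ramification behaviour at $q$; the parity $|\mathcal{Q}|$ odd forces the final algebra to be definite, so the archimedean switch comes for free. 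The Jacquet--Langlands correspondence pins down the relevant Hecke eigensystem throughout, and the level-switching / congruence-number machinery shows that each switch contributes, modulo $(\mathbf{Q}^\times)^2$, exactly the local factor $\overline{c}_q = \ord_q(\Delta_E^{\min})$; here $q\mid\mathcal{Q}$ exactly divides $N$, so $E$ has multiplicative reduction of type $I_{\overline{c}_q}$ at $q$ and $\overline{c}_q = -\ord_q(j_E)$ is exactly the quantity governing these bad-reduction (component-group) contributions. Accumulating over all $q\mid\mathcal{Q}$ yields (3.18).

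For (3.19) I would run the identical argument over the totally real field $F$, now comparing the parametrization of $E/F$ by the Shimura curve $X(M\mathcal{O}_F,\mathcal{Q}\mathcal{O}_F)$ (the algebra $\mathcal{B}/F$ ramified at the primes above $\mathcal{Q}$ and at one archimedean place) with the totally definite eigenform $\Phi_E$ on $B_F=B\otimes_{\mathbf{Q}}F$ (ramified at both archimedean places and split at the finite places). This requires the extension of the Ribet--Takahashi and Takahashi results to Shimura curves attached to quaternion algebras over totally real fields, using the integral models of [De]; that extension, together with the identification of the resulting local factors, is exactly what section~4 carries out. The crucial compatibility is that each $q\mid\mathcal{Q}$ is \emph{inert} in $F$, so that $\mathfrak{q}=q\mathcal{O}_F$ is a single prime and $F_{\mathfrak{q}}/\mathbf{Q}_q$ is unramified; consequently the minimal model of $E$ remains minimal after base change and $\ord_{\mathfrak{q}}(\Delta_{E/F}^{\min}) = \ord_q(\Delta_{E/\mathbf{Q}}^{\min}) = \overline{c}_q$. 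Hence the local factor produced over $F$ at $\mathfrak{q}$ agrees with $\overline{c}_q$ on the nose, and the bijection $q\leftrightarrow\mathfrak{q}$ gives the \emph{same} product $\prod_{q\mid\mathcal{Q}}\overline{c}_q$ in (3.19) as in (3.18).

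The main obstacle is this totally real extension of Ribet--Takahashi/Takahashi needed for (3.19): one must control the integral structure and the component groups at the bad primes of the Shimura curves over $\mathcal{O}_F$, establish the Ihara-type input that drives the level-switching, and check that the Petersson normalizations of $\mathbf{f}_E, f_E, \phi_E, \Phi_E$ fixed in the statement are precisely those for which the Jacquet--Langlands transfer preserves inner products up to $(\mathbf{Q}^\times)^2$ and the explicit local factors. All of this is deferred to section~4; granting it, (3.18) and (3.19) hold, and the lemma, including the concluding identity, follows.
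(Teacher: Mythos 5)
Your high-level plan is the same as the paper's: prove (3.18) and (3.19) separately via Ribet--Takahashi/Takahashi and their totally real extension in [De], then divide mod $(\mathbf{Q}^{\times})^2$ to get the concluding identity (that step is fine), and your observation that inertness of each $q \mid \mathcal{Q}$ in $F$ gives $\overline{c}_{\mathfrak{q}}(E/F) = \overline{c}_q(E/\mathbf{Q})$ is exactly the paper's last step. However, there is a genuine gap in the one place where you commit to a mechanism. You cannot switch ramification ``one prime $q\mid\mathcal{Q}$ at a time'': a quaternion algebra ramifies at an even number of places, so the [RT]-type degree comparison (Proposition 4.1 of the paper) moves \emph{pairs} of finite primes between $\mathfrak{n}_-$ and $\mathfrak{n}_+$, contributing $\overline{c}_{\mathfrak{p}}\cdot\overline{c}_{\mathfrak{q}}$ per pair, and since $|\mathcal{Q}|$ is odd no sequence of such switches ever reaches the definite algebra $B$; there is no ``archimedean switch for free''. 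The definite side enters by a different mechanism that your sketch never supplies: at one remaining prime $p$ of multiplicative reduction, Takahashi's formula (Proposition 4.3, Corollary 4.4 of the paper) expresses $\deg(h)$ as $\overline{c}_p(E/F)\cdot(\xi_{\mathbf{f}},\xi_{\mathbf{f}})_p$ mod squares, where $(\cdot,\cdot)_p$ is Grothendieck's monodromy pairing on the character group of the toric part of the Jacobian's reduction; one then uses the Deligne--Rapoport/Buzzard (resp.\ Cerednik--Drinfeld) special fibre, via Raynaud, to identify this character group with degree-zero divisors on $B^{\times}\backslash\widehat{B}^{\times}/\widehat{R}^{\times}$ with pairing weighted by $e_b$, and multiplicity one of the $f_E$-eigenline to get $(\xi_{f_E},\xi_{f_E})_p \equiv \langle\phi_E,\phi_E\rangle \bmod (\mathbf{Q}^{\times})^2$. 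Your ``each switch contributes $\overline{c}_q$'' happens to account for the product $\prod_{q\mid\mathcal{Q}}\overline{c}_q$, but by itself it never produces the factor $\langle\phi_E,\phi_E\rangle$ at all.

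A second, subtler omission concerns (3.19). Over $F$ the inertness of $q$ is needed for more than matching minimal discriminants (the only use you assign it): in the Cerednik--Drinfeld description at $\mathfrak{p}=q\mathcal{O}_F$, the edge set of the dual graph is $B_F^{\times}\backslash\widehat{B}_F^{\times}/\widehat{R}_F^{\times}F_{\mathfrak{p}}^{\times}$ with monodromy weights $e_c^{(\mathfrak{p})}$, whereas $\langle\Phi_E,\Phi_E\rangle$ is computed on $B_F^{\times}\backslash\widehat{B}_F^{\times}/\widehat{R}_F^{\times}$ with weights $e_b$; it is precisely because $\mathfrak{p}$ is inert over $\mathbf{Q}$ that these coset spaces coincide and $e_b=e_b^{(\mathfrak{p})}$ (paper, preceding Proposition 4.6). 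One must also use that $\Phi_E$ is new at $\mathfrak{p}$, so that $\zeta_{\Phi_E}$ lies in the kernel of the two degeneracy maps and hence in the character group in the first place. These points are checkable, and the paper checks them, but your proposal defers exactly this content to ``section 4'' --- which is the proof being asked for. As a blind attempt it therefore establishes only the easy reduction and the bookkeeping of the local factors, not the two formulas that carry the content of the lemma.
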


\bigskip

We will establish Lemma 3.4 in the next section, and thus completing the proof of Theorem 3.2.

\bigskip

\begin{remark}
For more discussions related to the explicit form of generalizations of Gross-Zagier type formulas, we refer to [H], specifically Theorem 4.4.2 and Theorem 5.6.2 of [H], or [CST], specifically Theorem 1.5 and Theorem 1.10 of [CST].
\end{remark}

\bigskip

Before proceeding to section 4, we make the following additional observations, which are not needed for the proof of Lemma 3.4 in section 4. Here we only require $\mathcal{Q}$ to be square-free. Let $E_{\delta}/F$ be the quadratic twist of $E/F$ by the quadratic Hecke character $\delta$ of $F$, with $\delta$ as before. We have $L(s,E_{\delta}/F)=L(s,E/F,\delta)$. Now the conductor of $E/F$, which is equal to $N \mathcal{O}_F = M \mathcal{Q} \mathcal{O}_F$, is relatively prime to the conductor $\mathfrak{c}_{\delta}$ of $\delta$. Thus the conductor of $E_{\delta}/F$ is equal to $N \mathfrak{c}_{\delta}^2$. Assume for simplicity that, in addition that $2$ is unramified in $F$. Then the rank $0$ case of the Birch and Swinnerton-Dyer conjecture for $E_{\delta}/F$ implies that the normalized $L$-value (3.1) is always the square of a rational number, up to a factor of two, without having to assume that $L^{\prime}(1,E/F,\chi_F) \neq 0$. The argument we give below is similar to that of Proposition 2.2 of [M2] and Proposition 1 of [M3].

\bigskip

We may choose $\Omega_{f_E}^{\pm}$ so that they are equal to the real and imaginary periods $\Omega^{\pm}_{E/\mathbf{Q}}$ of $E/\mathbf{Q}$ with respect to the Neron differential associated to a global minimal Weierstrass equation for $E/\mathbf{Q}$. Define:
\[
\Omega_{E_{\delta}/F} := \frac{\tau(\delta)}{\mathcal{N}_{F/\mathbf{Q}} \mathfrak{c}_{\delta}} (\Omega_{E/\mathbf{Q}}^w)^2 = \frac{1}{(\mathcal{N}_{F/\mathbf{Q}} \mathfrak{c}_{\delta})^{1/2}}  |\Omega_{E/\mathbf{Q}}^w |^2.
\]
Up to a power of two, the number $\Omega_{E_{\delta}/F}$ is equal to the real period factor for the Birch and Swinnerton-Dyer conjecture for $E_{\delta}/F$, {\it c.f} for example Corollary 2.6 and Theorem 3.2 of [P]; here we note that, because of the assumption that $2$ is unramified in $F$, the reasoning in {\it loc. cit.} concerning minimal Weierstrass equations at $2$-adic valuation, also applies to our current situation of $E_{\delta}/F$.  
\bigskip

Now we may assume that $L(1,E_{\delta}/F) = L(1,E/F,\delta) \neq 0$. Then the rank $0$ case of the Birch and Swinnerton-Dyer conjecture for $E_{\delta}/F$ gives:
\begin{eqnarray*}
& & \frac{D_F^{1/2} \mathcal{N}_{F/\mathbf{Q}} \mathfrak{c}_{\delta}  L(1,E/F,\delta)}{\tau(\delta) ( \Omega_{E/\mathbf{Q}}^w)^2} = \frac{D_F^{1/2} L(1,E_{\delta}/F)}{\Omega_{E_{\delta}/F}}   \\
&\stackrel{\cdot}{=}& \frac{ (\prod_{\mathfrak{l}} c_{\mathfrak{l}}(E_{\delta}/F)) \cdot \# \mbox{III}(E_{\delta}/F)}{(\# E_{\delta}(F)_{tors})^2}
\end{eqnarray*}
with the symbol $\stackrel{\cdot}{=}$ being equality up to a power of two (recall that the number $\Omega_{E_{\delta}/F}$ is equal to the real period factor for $E_{\delta}/F$, up to a power of two). The numbers $c_{\mathfrak{l}}(E_{\delta}/F)$ are the local Tamagawa factors of $E_{\delta}/F$ at the various primes $\mathfrak{l}$ of $\mathcal{O}_F$. We have $c_{\mathfrak{l}}(E_{\delta}/F)=1$ for $\mathfrak{l}$ not dividing the conductor of $E_{\delta}/F$; i.e. $c_{\mathfrak{l}}(E_{\delta}/F)=1$ for $\mathfrak{l}$ relatively prime to $N =M \mathcal{Q}$ and $\mathfrak{c}_{\delta}$. Finally by the Shafarevich-Tate conjecture for $E_{\delta}/F$ (which is part of the Birch and Swinnerton-Dyer conjecture), the order of the Shafarevich-Tate group $\# \mbox{III}(E_{\delta}/F)$ is a finite, in which case it must be a square. 

\bigskip
Thus it suffices to show that $\prod_{\mathfrak{l}} c_{\mathfrak{l}}(E_{\delta}/F)$ is a square, up to a factor of two. We proceed as follows. 

\bigskip
Now if $\mathfrak{l}$ is a prime of $\mathcal{O}_F$ dividing $M$, then denoting by $l$ the rational prime of $\mathbf{Z}$ that lies below $\mathfrak{l}$, we have $l$ splits in $F$, and $l \mathcal{O}_F = \mathfrak{l} \cdot \overline{ \mathfrak{l}}$, where $\overline{\mathfrak{l}}$ is the conjugate of $\mathfrak{l}$ over $\mathbf{Q}$. The local components $\delta_{\mathfrak{l}}$ and $\delta_{\overline{\mathfrak{l}}}$ are trivial, and hence:
\[
c_{\mathfrak{l}}(E_{\delta}/F) = c_{\mathfrak{l}}(E/F) = c_l(E/\mathbf{Q})
\] 
\[
c_{\overline{\mathfrak{l}}}(E_{\delta}/F) = c_{\overline{\mathfrak{l}}}(E/F) = c_l(E/\mathbf{Q})
\] 
where $c_l(E/\mathbf{Q})$ is the local Tamagawa factor of $E/\mathbf{Q}$ at the rational prime $l$. Thus $c_{\mathfrak{l}}(E_{\delta}/F) \cdot  c_{\overline{\mathfrak{l}}}(E_{\delta}/F) =( c_l(E/\mathbf{Q}))^2$. Hence:
\[
\prod_{\mathfrak{l} | M} c_{\mathfrak{l}}(E_{\delta}/F) = ( \prod_{l | M} c_l(E/\mathbf{Q}) )^2
\]
which is a square.

\bigskip

If $ \mathfrak{l}$ is a prime of $\mathcal{O}_F$ dividing $\mathcal{Q}$, then we have $\mathfrak{l} = l \mathcal{O}_F$, where $l$ is the rational prime of $\mathbf{Z}$ that lies below $\mathfrak{l}$. Now $E/\mathbf{Q}$ has multiplicative reduction at primes dividing $\mathcal{Q}$, and the rational prime $l | \mathcal{Q}$ is inert in $F$. Thus $E/F$ has {\bf split} multiplicative reduction at $\mathfrak{l}$. In turn, since the local component $\delta_{\mathfrak{l}}$ of $\delta$ satisfies $\delta_{\mathfrak{l}}(\pi_{\mathfrak{l}}) =-1$, it follows that $E_{\delta}/F$ has {\bf non-spit} multiplicative reduction at $\mathfrak{l}$. In this case, we have $c_{\mathfrak{l}}(E_{\delta}/F)$ is equal to $1$ if the normalized valuation at $\mathfrak{l}$ of the minimal discriminant of $E_{\delta}/F$ is odd, and is equal to $2$ otherwise. In any case it is a square up to a factor of $2$. 

\bigskip

Finally if $\mathfrak{l}$ divides $\mathfrak{c}_{\delta}$, then $E_{\delta}/F$ has additive reduction at $\mathfrak{l}$. Firstly assume that $\mathfrak{l}$ is relatively prime to $2$. Then we are in case 6 of Tate's algorithm [T1]; the reduction type is of $I_0^*$, and we have that $c_{\mathfrak{l}}(E_{\delta}/F)$ is equal to $1,2$ or $4$, {\it c.f.} Proposition 5 of [Ru]. In any case it is a square up to a factor of $2$. 

\bigskip

Now we deal with the case where $\mathfrak{l}$ divides $\mathfrak{c}_{\delta}$ and $\mathfrak{l}$ lies above the prime $2$. Recall that we have made the additional assumption that $2$ is unramified in $F$. Also note that the normalized valuation $\ord_{\mathfrak{l}} \mathfrak{c}_{\delta}$ of $\mathfrak{c}_{\delta}$ at $\mathfrak{l}$ is either equal to $2$ or $3$. Call these case (i) and case (ii) respectively. 

\bigskip

Define $f_{\mathfrak{l}}(E_{\delta}/F)$ to be the normalized valuation at $\mathfrak{l}$ of the conductor of $E_{\delta}/F$. As recalled above, the conductor of $E_{\delta}/F$ is equal to $N \mathfrak{c}_{\delta}^2$. Thus $f_{\mathfrak{l}}(E_{\delta}/F)$ is equal to $4$ in case (i) and $6$ in case (ii). 
\bigskip

Now we need to know something about $\ord_{\mathfrak{l}} \Delta^{min}(E_{\delta}/F)$, the normalized valuation at $\mathfrak{l}$ of the minimal discriminant ideal $\Delta^{min}(E_{\delta}/F)$ of $E_{\delta}/F$. From proposition 2.4 of [P], we see that:
\begin{eqnarray*}
\ord_{\mathfrak{l}} \Delta^{min}(E_{\delta}/F)=12, \,\   \mbox{in case (i)}
\end{eqnarray*}
and
\begin{eqnarray*}
\ord_{\mathfrak{l}} \Delta^{min}(E_{\delta}/F)= 6 \mbox{ or } 18, \,\   \mbox{in case (ii) }
\end{eqnarray*}
Specifically, case (i) corresponds to part 2 b) of Proposition 2.4 of [P], while case (ii) corresponds to part 2(c) of Proposition 2.4 of [P]. Here again, the reasoning in {\it loc. cit.} concerning minimal Weierstrass equations at $2$-adic valuation also applies to our current situation of $E_{\delta}/F$, because of our additional assumption that $2$ is unramified in $F$.

\bigskip

Now we use Ogg's formula [O]:
\[
f_{\mathfrak{l}}(E_{\delta}/F) = 1 - m_{\mathfrak{l}}(E_{\delta}/F) + \ord_{\mathfrak{l}} \Delta^{min}(E_{\delta}/F)
\]
where $m_{\mathfrak{l}}(E_{\delta}/F)$ is by definition the number of irreducible components (not counting multiplicities) of the geometric special fiber of the proper minimal regular model of $E_{\delta}/F$ over $\mathcal{O}_{F_{\mathfrak{l}}}$. It follows that in case (i) we have $m_{\mathfrak{l}}(E_{\delta}/F)=9$, while in case (ii) we have $m_{\mathfrak{l}}(E_{\delta}/F)$ is equal to $1$ or $13$. By going through the list of Kodaira-Neron for additive reduction types (for example the table in section 6 of  [T1]), we see that in case (i), the possible reduction types for $E_{\delta}/F$ at $\mathfrak{l}$ are type $I_4^*$ and type $II^*$, while in case (ii), the possible reduction types are type $II$ and type $I_8^*$. In all these cases, the local Tamagawa number $c_{\mathfrak{l}}(E_{\delta}/F)$ is again is equal to $1,2$ or $4$. 

\section{Completion of proofs}

It remains to establish Lemma 3.4. The arguments are independent of the previous sections. We first begin with a more general setup as follows. 

\bigskip

Let $F$ be a totally real field, and $\mathfrak{n}$ an ideal of $\mathcal{O}_F$. An admissible factorization $\mathfrak{n} = \mathfrak{n}_{+} \mathfrak{n}_{-}$ is defined by the condition that $\mathfrak{n}_+, \mathfrak{n}_-$ is a pair of relatively prime ideals of $\mathcal{O}_F$, and such that the following holds for $\mathfrak{n}_-$: in the case where $[F:\mathbf{Q}]$ is odd, we assume that $\mathfrak{n}_{-}$ is a square-free product of an even number of distinct prime ideals of $\mathcal{O}_F$ (we allow $\mathfrak{n}_{-}=\mathcal{O}_F$), while in the case where $[F:\mathbf{Q}]$ is even, we assume that $\mathfrak{n}_{-}$ is a square-free product of an odd number of distinct prime ideals of $\mathcal{O}_F$. Given an admissible factorization $\mathfrak{n} = \mathfrak{n}_+ \mathfrak{n}_-$, note that if $\mathfrak{p}, \mathfrak{q}$ are distinct prime ideals that divide $\mathfrak{n}_-$, then the pair $\mathfrak{n}_+ \mathfrak{p} \mathfrak{q}$ and $\mathfrak{n}_-  (\mathfrak{p} \mathfrak{q})^{-1}$ again gives an admissible factorization of $\mathfrak{n}$. 

\bigskip

Fix an archimedean place $\nu$ of $F$ corresponding to field embedding $\iota: F \hookrightarrow \mathbf{R}$. We denote by $X(\mathfrak{n}_{+},\mathfrak{n}_{-})/F$ the (compact) Shimura curve over $F$ of Eichler level $\mathfrak{n}_+$, defined with respect to the quaternion algebra over $F$ that ramifies exactly at the prime ideals of $\mathcal{O}_F$ dividing $\mathfrak{n}_-$, and at the archimedean places of $F$ other than $\nu$. In the special case where $F=\mathbf{Q}$, ideals $\mathfrak{n}_+,\mathfrak{n}_-$ corresponds to positive integers $N_+,N_-$, and $\mathfrak{n}$ corresponds to $N:=N_+ N_-$. In the case where $N_- =1$, so $N=N_+$, then $X(N,1)$ is taken to be the (compactified) modular curve $X_0(N)/\mathbf{Q}$. 

\bigskip

Consider an elliptic curve $E/F$ with conductor equal to $\mathfrak{n}$. We assume that $E/F$ has multiplicative reduction at some prime dividing $\mathfrak{n}$ (thus this is automatic when $\mathfrak{n}_- \neq \mathcal{O}_F$, for instance when $[F:\mathbf{Q}]$ is even). In particular $E/\overline{F}$ does not have complex multiplication. Also assume that $E/F$ is modular, namely that $E/F$ is associated to a parallel weight two cuspidal Hilbert eigenform $\mathbf{f}$ over $F$ of level $\mathfrak{n}$ (thus in particular the Hecke eigenvalues of $\mathbf{f}$ all belong to $\mathbf{Z}$).

\bigskip

By the Jacquet-Langlands correspondence and the isogeny theorem, one has that $E/F$ admits a modular parametrization over $F$ by the Shimura curve $X(\mathfrak{n}_+,\mathfrak{n}_-)/F$, for any given admissible factorization $\mathfrak{n} = \mathfrak{n}_+ \mathfrak{n}_-$. More precisely, denote by $J(\mathfrak{n}_+,\mathfrak{n}_-)/F$ the Jacobian variety of $X(\mathfrak{n}_+,\mathfrak{n}_-)/F$ (see for example p. 29 of [Z1] or section 5.1 of [H] for the definition of the Jacobian variety of $X(\mathfrak{n}_+,\mathfrak{n}_-)/F$). Then there exists a non-constant morphism of abelian varieties over $F$:
\[
h : J(\mathfrak{n}_+,\mathfrak{n}_-)/F \rightarrow E/F
\]
which will be referred to as a modular parametrization of $E/F$ by $X(\mathfrak{n}_+,\mathfrak{n}_-)/F$ or by $J(\mathfrak{n}_+,\mathfrak{n}_-)/F$. Corresponding to $h$, denote by:
\[
h^{\vee} : = E/F \rightarrow J(\mathfrak{n}_+,\mathfrak{n}_-)/F
\]
the dual morphism (here we are using the fact that $E/F$ and $J(\mathfrak{n}_+,\mathfrak{n}_-)$ are canonically self-dual). The degree $\deg(h)$ of the modular parametrization $h$, is defined to be the unique positive integer $d$, such that
\[
[d] = h \circ h^{\vee}
\]
with $[d]$ being the multiplication by $d$ endomorphism on $E/F$. The choice of $h$ is not unique, but since $E/\overline{F}$ does not have complex multiplication, one has that, the degrees of the different choices of modular parametrizations of $E/F$ by $X(\mathfrak{n}_+,\mathfrak{n}_-)/F$ differ by multiplication by squares of non-zero rational numbers.

\bigskip

A modular parametrization $h: J(\mathfrak{n}_+,\mathfrak{n}_-)/F \rightarrow E/F$ as above is said to be {\it optimal}, if $ker(h)$ is connected, i.e. an abelian sub-variety of $J(\mathfrak{n}_+,\mathfrak{n}_-)/F$, in which case $E/F$ is said to be an optimal quotient of $J(\mathfrak{n}_+,\mathfrak{n}_-)/F$ (via $h$). In general given a modular parametrization $h:J(\mathfrak{n}_+,\mathfrak{n}_-)/F \rightarrow E/F$, there is an elliptic curve $E^{\prime}/F$ that is isogeneous over $F$ to $E/F$, and such that $E^{\prime}/F$ admits an optimal modular parametrization $h^{\prime}: J(\mathfrak{n}_+,\mathfrak{n}_-)/F \rightarrow E^{\prime}/F$ (in particular $E^{\prime}/F$ corresponds to the same cuspidal Hilbert eigenform $\mathbf{f}$).

\bigskip

Recall that we assumed that $E/F$ has multiplicative reduction at some prime dividing $\mathfrak{n}$. Let $\mathfrak{p}$ be one such prime of multiplicative reduction (thus $\mathfrak{p}$ exactly divides $\mathfrak{n}$). Put $\overline{c}_{\mathfrak{p}}(E/F) := \ord_{\mathfrak{p}} \Delta^{min}(E/F)$. We first observe the following: if $g: E/F \rightarrow E^{\prime}/F$ is an isogeny over $F$ (so $E^{\prime}/F$ again has multiplicative reduction at $\mathfrak{p}$), then 
\[
\deg(g) \cdot \overline{c}_{\mathfrak{p}}(E^{\prime}/F) = \overline{c}_{\mathfrak{p}}(E/F)  \mod{(\mathbf{Q}^{\times})^{2}}.
\]
This follows from Tate's uniformization theory, which in particular describes the set of all isogenies over $F_{\mathfrak{p}^2}$ between $E/F_{\mathfrak{p}^2}$ and $E^{\prime}/F_{\mathfrak{p}^2}$ (here $F_{\mathfrak{p}^2}$ is the quadratic unramified extension of the completion $F_{\mathfrak{p}}$ of $F$ at $\mathfrak{p}$), in terms of the Tate $q$-parameters $q(E/F_{\mathfrak{p}}), q(E^{\prime}/F_{\mathfrak{p}}) \in F_{\mathfrak{p}}^{\times}$, {\it c.f.} Theorem on p. 177 of [T2], and the fact that $\overline{c}_{\mathfrak{p}}(E/F) = \ord_{\mathfrak{p}} q(E/F_{\mathfrak{p}}), \overline{c}_{\mathfrak{p}}(E^{\prime}/F) = \ord_{\mathfrak{p}} q(E^{\prime}/F_{\mathfrak{p}})$ (thus the above relation in fact holds for isogeny over $F_{\mathfrak{p}^2}$ between $E/F_{\mathfrak{p}^2}$ and $E^{\prime}/F_{\mathfrak{p}^2}$).

\bigskip

From this it follows that, if $E^{\prime}/F$ is isogeneous over $F$ to $E/F$, then for any modular parametrization $h^{\prime} :J(\mathfrak{n}_+,\mathfrak{n}_-)  \rightarrow E^{\prime}/F$, one has:
\begin{eqnarray}
 \deg(h^{\prime}) \cdot \overline{c}_{\mathfrak{p}}(E^{\prime}/F) = \deg(h) \cdot \overline{c}_{\mathfrak{p}}(E/F)  \mod{(\mathbf{Q}^{\times})^{2}}.
\end{eqnarray}
This will be used repeatedly below.

\begin{proposition}
Let $\mathfrak{p},\mathfrak{q}$ be distinct prime ideals that divide $\mathfrak{n}_-$. For any modular parametrizations:
\[
h: J( \mathfrak{n}_+,\mathfrak{n}_-)/F \rightarrow E/F
\]
\[
\widetilde{h} : J(\mathfrak{n}_+ \mathfrak{p} \mathfrak{q}, \mathfrak{n}_- (\mathfrak{p} \mathfrak{q})^{-1})/F \rightarrow E/F
\]
we have the relation:
\[
\deg(\widetilde{h}) = \overline{c}_{\mathfrak{p}}(E/F) \cdot \overline{c}_{\mathfrak{q}}(E/F) \cdot \deg(h)   \mod{(\mathbf{Q}^{\times})^{2}}
\]
\end{proposition}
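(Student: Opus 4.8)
The plan is to compare the two parametrizations through the geometry of the special fibres of the source Jacobians at the multiplicative primes $\mathfrak{p}$ and $\mathfrak{q}$, following the method of Ribet--Takahashi [RT], [Tak] but carried out for the Shimura curves over $F$ of [De]. Write $J := J(\mathfrak{n}_+,\mathfrak{n}_-)$ and $J' := J(\mathfrak{n}_+\mathfrak{p}\mathfrak{q},\mathfrak{n}_-(\mathfrak{p}\mathfrak{q})^{-1})$. First, using the relation (4.1), I would observe that both sides of the asserted congruence are unchanged modulo $(\mathbf{Q}^{\times})^2$ when $E$ is replaced by an $F$-isogenous curve; hence there is no loss in taking $h$ and $\widetilde{h}$ to be optimal, so that the induced maps on character groups are the natural ones.

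The key local input is a self-intersection formula for Grothendieck's monodromy pairing. At a prime $\mathfrak{l}$ of multiplicative reduction let $\mathcal{X}_{\mathfrak{l}}(J)$ denote the character group of the toric part of the special fibre of $J$ at $\mathfrak{l}$, equipped with the monodromy pairing $\langle\,,\,\rangle_{\mathfrak{l}}$; for $E$ itself $\mathcal{X}_{\mathfrak{l}}(E) = \mathbf{Z}\cdot e$ with $\langle e,e\rangle_{\mathfrak{l}} = \ord_{\mathfrak{l}} q(E/F_{\mathfrak{l}}) = \overline{c}_{\mathfrak{l}}(E/F)$. Writing $\xi_{\mathfrak{l}} := \mathcal{X}_{\mathfrak{l}}(h^{\vee})(e)$ for the image of $e$ in $\mathcal{X}_{\mathfrak{l}}(J)$, the functoriality of the monodromy pairing together with $h\circ h^{\vee} = [\deg(h)]$ gives $\langle \xi_{\mathfrak{l}}, \xi_{\mathfrak{l}}\rangle_{\mathfrak{l}} = \deg(h)\,\overline{c}_{\mathfrak{l}}(E/F)$, and likewise $\langle \xi'_{\mathfrak{l}}, \xi'_{\mathfrak{l}}\rangle_{\mathfrak{l}} = \deg(\widetilde{h})\,\overline{c}_{\mathfrak{l}}(E/F)$ for $J'$. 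I would apply this with $\mathfrak{l} = \mathfrak{q}$ on the side of $J$ and with $\mathfrak{l} = \mathfrak{p}$ on the side of $J'$.

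Next I would identify the two character groups $\mathcal{X}_{\mathfrak{q}}(J)$ and $\mathcal{X}_{\mathfrak{p}}(J')$ with a single space of scalar-valued automorphic forms on a totally definite quaternion algebra. Since $\mathfrak{q}\mid\mathfrak{n}_-$, the Cherednik--Drinfeld uniformization of $X(\mathfrak{n}_+,\mathfrak{n}_-)/F$ at $\mathfrak{q}$ presents $\mathcal{X}_{\mathfrak{q}}(J)$ as the degree-zero part of the Brandt module of the quaternion algebra $D$ obtained from the algebra of $J$ by switching its local invariants at $\mathfrak{q}$ and at $\nu$; this $D$ is the totally definite algebra over $F$ ramified exactly at $\mathfrak{p}$, at the primes dividing $\mathfrak{n}_-(\mathfrak{p}\mathfrak{q})^{-1}$, and at every archimedean place, with Eichler level $\mathfrak{n}_+\mathfrak{q}$. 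On the other side $\mathfrak{p}$ exactly divides the Eichler level of $J'$ and the algebra of $J'$ is split at $\mathfrak{p}$, so the Deligne--Rapoport-type degeneration presents $\mathcal{X}_{\mathfrak{p}}(J')$ as the degree-zero Brandt module of the algebra obtained by switching invariants at $\mathfrak{p}$ and $\nu$ --- which is exactly the same $D$, again of Eichler level $\mathfrak{n}_+\mathfrak{q}$. Under these identifications the monodromy pairings correspond, modulo $(\mathbf{Q}^{\times})^2$, to the Petersson pairing on the Brandt module, the discrepancy being the Eichler weights $\#\mathcal{O}^{\times}/\{\pm 1\}$, which are absorbed into the normalisation and into squares.

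Finally, because $E$ is Steinberg at both $\mathfrak{p}$ and $\mathfrak{q}$, its eigencomponent in this common Brandt module is one-dimensional by multiplicity one, so $\xi_{\mathfrak{q}}$ and $\xi'_{\mathfrak{p}}$ lie on the same rational line and hence $\langle \xi_{\mathfrak{q}},\xi_{\mathfrak{q}}\rangle_{\mathfrak{q}} \equiv \langle \xi'_{\mathfrak{p}},\xi'_{\mathfrak{p}}\rangle_{\mathfrak{p}} \bmod (\mathbf{Q}^{\times})^2$. Combining this with the self-intersection formula yields $\deg(h)\,\overline{c}_{\mathfrak{q}}(E/F) \equiv \deg(\widetilde{h})\,\overline{c}_{\mathfrak{p}}(E/F)$, and multiplying through by $\overline{c}_{\mathfrak{p}}(E/F)$ (so that $\overline{c}_{\mathfrak{p}}(E/F)^2$ is a square) gives the claimed relation $\deg(\widetilde{h}) \equiv \overline{c}_{\mathfrak{p}}(E/F)\,\overline{c}_{\mathfrak{q}}(E/F)\,\deg(h) \bmod (\mathbf{Q}^{\times})^2$. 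The main obstacle is the middle step: making the two character-group identifications Hecke-equivariant and with matching monodromy pairings over the totally real base $F$, where one must track carefully the archimedean place $\nu$ that is switched under both the Cherednik--Drinfeld and the Deligne--Rapoport degenerations, and verify that the two presentations of $\mathcal{X}_{\mathfrak{q}}(J)$ and $\mathcal{X}_{\mathfrak{p}}(J')$ really do realise the $E$-eigenline inside the same space of definite quaternionic forms with compatible pairings. This is precisely where the extension of [RT], [Tak] to the Shimura curves of [De] is genuinely needed.
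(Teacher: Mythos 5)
Your route is essentially the paper's route, expanded. The paper's proof of this proposition consists of exactly your first step --- the reduction, via the isogeny-invariance relation (4.1), to optimal parametrizations --- followed by a citation: Theorem 2 of [RT] for $F=\mathbf{Q}$ and Theorem 3.2.7 of [De] for general totally real $F$. What you then sketch is a reconstruction of the proof of those cited results, and the skeleton is correct: the self-intersection identity $\langle \xi_{\mathfrak{l}},\xi_{\mathfrak{l}}\rangle_{\mathfrak{l}} = \deg(h)\,\overline{c}_{\mathfrak{l}}(E/F)$ does follow from $h\circ h^{\vee}=[\deg(h)]$ together with the functoriality of Grothendieck's monodromy pairing (it is the mod-squares shadow of the paper's Proposition 4.3 and Corollary 4.4, with your $\xi_{\mathfrak{l}}$ a rational multiple of the eigen-generator $\xi_{\mathbf{f}}$, which is harmless modulo $(\mathbf{Q}^{\times})^{2}$), and your cross-matching --- Cherednik--Drinfeld at $\mathfrak{q}$ for $J(\mathfrak{n}_+,\mathfrak{n}_-)$ against the Deligne--Rapoport/Buzzard/Carayol degeneration at $\mathfrak{p}$ for $J(\mathfrak{n}_+\mathfrak{p}\mathfrak{q},\mathfrak{n}_-(\mathfrak{p}\mathfrak{q})^{-1})$ --- genuinely lands in the Brandt module of one and the same totally definite algebra $D$ (ramified at the primes dividing $\mathfrak{n}_-\mathfrak{q}^{-1}$ and at all archimedean places) of Eichler level $\mathfrak{n}_+\mathfrak{q}$, with the $E$-eigenline lying in the $\mathfrak{q}$-new kernel on the Cherednik--Drinfeld side, so multiplicity one applies and your final bookkeeping gives the stated congruence.

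One caution, which you partly anticipate: the assertion that the two monodromy pairings both correspond modulo squares to the Petersson pairing, with the Eichler weights ``absorbed into the normalisation and into squares,'' is not justified as written, and it is the crux. Both pairings are diagonal in the natural class basis, but with a priori different weights: $e_b = \#(R_{F,b}^{\times}/\mathcal{O}_F^{\times})$ on the Deligne--Rapoport side versus $e^{(\mathfrak{p})}_c = \#\bigl((R^{(\mathfrak{p})}_{F,c})_{+}^{\times}/(\mathcal{O}_F^{(\mathfrak{p})})^{\times}\bigr)$ on the Cherednik--Drinfeld side. These weights enter the self-pairing of the eigenvector \emph{linearly}, not quadratically, so a per-class discrepancy does not formally produce a global square on the eigenline; note that in its own Section 4 computation the paper equates $e_b$ with $e_b^{(\mathfrak{p})}$ (and collapses the extra $F_{\mathfrak{p}}^{\times}$-quotient in the double-coset description of the edges) only under the additional hypothesis that $\mathfrak{p}$ is inert over $\mathbf{Q}$, whereas the proposition concerns arbitrary $\mathfrak{p},\mathfrak{q}\mid\mathfrak{n}_-$ over an arbitrary totally real $F$. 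You correctly flag this Hecke- and pairing-compatibility step as the place where the extension of [RT] and [Tak] carried out in [De] is needed; so read as a proof modulo the same external input the paper invokes, your argument coincides with the paper's, but read as a standalone proof its one genuine gap sits exactly at that step.
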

\begin{proof}
Consider optimal parametrizations:
\[
h^{\prime}: J( \mathfrak{n}_+,\mathfrak{n}_-)/F \rightarrow E^{\prime}/F
\]
\[
\widetilde{h}^{\prime \prime} : J(\mathfrak{n}_+ \mathfrak{p} \mathfrak{q}, \mathfrak{n}_- (\mathfrak{p} \mathfrak{q})^{-1})/F \rightarrow E^{\prime \prime} /F
\]
with $E^{\prime}/F$ and $E^{\prime \prime}/F$ both being isogeneous over $F$ to $E/F$. Using (4.1), we see that it suffices to show that
\[
\deg(\widetilde{h}^{\prime \prime}) = \overline{c}_{\mathfrak{p}}(E^{\prime} /F) \cdot \overline{c}_{\mathfrak{q}}(E^{\prime \prime}/F) \cdot \deg(h^{\prime})   \mod{(\mathbf{Q}^{\times})^{2}}.
\]
In the case where $F=\mathbf{Q}$, this follows from the result proved by Ribet-Takahashi, see Theorem 2 of [RT]. For general totally real $F$, it is given in Theorem 3.2.7 of [De]. 
\end{proof}

\begin{corollary}
As before consider an admissible factorization $\mathfrak{n} =\mathfrak{n}_+ \mathfrak{n}_-$. In the case where $[F:\mathbf{Q}]$ is odd, consider two modular parametrizations of $E/F$:
\[
h: J(\mathfrak{n}_+,\mathfrak{n}_-)/F \rightarrow E/F
\]
\[
\widetilde{h} : J(\mathfrak{n},\mathcal{O}_F)/F \rightarrow E/F
\]
then we have
\[
\deg(\widetilde{h}) = \big( \prod_{\mathfrak{q} | \mathfrak{n}_-} \overline{c}_{\mathfrak{q}}(E/F) \big) \cdot  \deg(h)   \mod{(\mathbf{Q}^{\times})^{2}}.
\]
In the case where $[F:\mathbf{Q}]$ is even, let $\mathfrak{p}$ be a prime ideal dividing $\mathfrak{n}_-$. Consider two modular parametrizations of $E/F$:
\[
h: J(\mathfrak{n}_+,\mathfrak{n}_-)/F \rightarrow E/F
\]
\[
\widetilde{h} : J(\mathfrak{n} \mathfrak{p}^{-1},\mathfrak{p})/F \rightarrow E/F
\]
then we have
\[
\deg(\widetilde{h}) = \big( \prod_{\mathfrak{q} | \mathfrak{n}_- \mathfrak{p}^{-1} } \overline{c}_{\mathfrak{q}}(E/F) \big) \cdot  \deg(h)   \mod{(\mathbf{Q}^{\times})^{2}}.
\]
\end{corollary}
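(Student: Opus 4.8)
The plan is to derive both statements as a purely formal telescoping of Proposition 4.2, which transfers a \emph{pair} of primes from the ramification part $\mathfrak{n}_-$ into the Eichler level $\mathfrak{n}_+$ at the cost of multiplying the degree, modulo squares, by the product of the two corresponding factors $\overline{c}_{\mathfrak{q}}(E/F)$. First I would record that, for any admissible factorization $\mathfrak{n} = \mathfrak{a}\mathfrak{b}$, the degree $\deg(h)$ of a modular parametrization $h\colon J(\mathfrak{a},\mathfrak{b})/F \to E/F$ is, by the remark preceding Proposition 4.2 (and the fact that $E/\overline{F}$ has no complex multiplication), independent of the choice of $h$ modulo $(\mathbf{Q}^{\times})^{2}$. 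Write $d(\mathfrak{a},\mathfrak{b})$ for this well-defined class in $\mathbf{Q}^{\times}/(\mathbf{Q}^{\times})^{2}$. In this notation Proposition 4.2 becomes, for distinct primes $\mathfrak{p},\mathfrak{q}$ dividing $\mathfrak{b}$,
\[
d(\mathfrak{a}\mathfrak{p}\mathfrak{q},\, \mathfrak{b}(\mathfrak{p}\mathfrak{q})^{-1}) = \overline{c}_{\mathfrak{p}}(E/F)\cdot\overline{c}_{\mathfrak{q}}(E/F)\cdot d(\mathfrak{a},\mathfrak{b}) \mod{(\mathbf{Q}^{\times})^{2}},
\]
and since both assertions of the corollary are identities between such classes, it suffices to iterate this relation.

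For the case $[F:\mathbf{Q}]$ odd, admissibility forces $\mathfrak{n}_-$ to be a squarefree product of an \emph{even} number of prime ideals, say $\mathfrak{n}_- = \mathfrak{q}_1\cdots\mathfrak{q}_{2k}$. I would induct on $k$, at the $j$-th step applying the displayed relation to the pair $\mathfrak{q}_{2j-1},\mathfrak{q}_{2j}$, i.e. passing from the factorization with minus-part $\mathfrak{q}_{2j-1}\cdots\mathfrak{q}_{2k}$ to the one with minus-part $\mathfrak{q}_{2j+1}\cdots\mathfrak{q}_{2k}$ and accumulating the factor $\overline{c}_{\mathfrak{q}_{2j-1}}(E/F)\,\overline{c}_{\mathfrak{q}_{2j}}(E/F)$. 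Each intermediate factorization is again admissible because deleting two primes from $\mathfrak{n}_-$ preserves the parity of its number of prime divisors, so the minus-part retains an even number of primes throughout. After $k$ steps the minus-part is $\mathcal{O}_F$ and the accumulated factor is $\prod_{j=1}^{2k}\overline{c}_{\mathfrak{q}_j}(E/F) = \prod_{\mathfrak{q}\mid\mathfrak{n}_-}\overline{c}_{\mathfrak{q}}(E/F)$, yielding
\[
d(\mathfrak{n},\mathcal{O}_F) = \Big( \prod_{\mathfrak{q}\mid\mathfrak{n}_-}\overline{c}_{\mathfrak{q}}(E/F) \Big)\, d(\mathfrak{n}_+,\mathfrak{n}_-) \mod{(\mathbf{Q}^{\times})^{2}},
\]
which is exactly the claimed formula.

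The case $[F:\mathbf{Q}]$ even is handled identically, the only difference being that admissibility now forces $\mathfrak{n}_-$ to have an \emph{odd} number of prime divisors, so that $\mathfrak{n}_-\mathfrak{p}^{-1}$ has an even number and can be exhausted two at a time while the distinguished prime $\mathfrak{p}$ is never moved; running the same induction on $\mathfrak{n}_-\mathfrak{p}^{-1}$ (each intermediate minus-part retaining an odd number of primes, hence admissible) terminates at the factorization $(\mathfrak{n}\mathfrak{p}^{-1},\mathfrak{p})$ with accumulated factor $\prod_{\mathfrak{q}\mid\mathfrak{n}_-\mathfrak{p}^{-1}}\overline{c}_{\mathfrak{q}}(E/F)$. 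I do not expect any serious obstacle here: the entire argument is a formal iteration of Proposition 4.2, and the sole point requiring genuine attention is the parity bookkeeping that guarantees each intermediate factorization remains admissible (so that Proposition 4.2 is applicable at every step). Once that is verified, the two formulas follow immediately.
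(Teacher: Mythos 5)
Your proposal is correct and is essentially identical to the paper's own proof, which likewise establishes the corollary by successively applying the prime-pair transfer result (Proposition 4.1 in the paper's numbering, not 4.2), using exactly the parity observation that removing two primes from $\mathfrak{n}_-$ preserves admissibility at each step. Your added remark that $\deg(h)$ is well-defined modulo $(\mathbf{Q}^{\times})^{2}$ independently of the choice of parametrization is the content of the no-CM remark preceding that proposition, so nothing further is needed.
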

\begin{proof}
This follows from successively applying Proposition 4.1, on recalling that in the case where $[F:\mathbf{Q}]$ is odd, that $\mathfrak{n}_-$ is a square-free product of an even number of distinct prime ideals of $\mathcal{O}_F$, while in the case where $[F:\mathbf{Q}]$ is even, that $\mathfrak{n}_- \mathfrak{p}^{-1}$ is a square-free product of an even number of distinct prime ideals of $\mathcal{O}_F$.
\end{proof}

\bigskip

To continue we first define some more quantities. 

\bigskip
As before let $\mathfrak{n} = \mathfrak{n}_+ \mathfrak{n}_-$ be an admissible factorization, and let $\mathfrak{p}$ be a prime that exactly divides $\mathfrak{n}$. The Shimura curve $X(\mathfrak{n}_+,\mathfrak{n}_-)/F$ has semi-stable reduction at the prime $\mathfrak{p}$. In the case where $\mathfrak{p}$ divides $\mathfrak{n}_+$, and $F=\mathbf{Q}$, this is due to Deligne-Rapoport [DR] ($N_-=1$) and K. Buzzard [Bu] ($N_- \neq 1$), while for $F \neq \mathbf{Q}$ this is due to Carayol [Ca]. In the case where $\mathfrak{p}$ divides $\mathfrak{n}_-$, this is due to Cerednik [Ce] and Drinfeld [Dr] ({\it c.f.} also [Ku, BC, BZ]; for a summary of the results of Cerednik and Drinfeld, the reader may also refer to section 1.4 - 1.5 of [N]). These authors have constructed proper flat model $\mathcal{X}(\mathfrak{n}_+,\mathfrak{n}_-)/\mathcal{O}_{F_{\mathfrak{p}}}$ for $X(\mathfrak{n}_+,\mathfrak{n}_-)/F_{\mathfrak{p}}$, with semi-stable reduction (the model $\mathcal{X}(\mathfrak{n}_+,\mathfrak{n}_-)/\mathcal{O}_{F_{\mathfrak{p}}}$ is admissible in the sense of Jordan-Livn\'e [JL]). 

\bigskip
The Jacobian $J(\mathfrak{n}_+,\mathfrak{n}_-)/F$ thus also has semi-stable reduction at the prime $\mathfrak{p}$. More precisely, denote by $\mathcal{J}(\mathfrak{n}_+,\mathfrak{n}_-)/\mathcal{O}_{F_{\mathfrak{p}}}$ the Neron model over $\mathcal{O}_{F_{\mathfrak{p}}}$ of $J(\mathfrak{n}_+,\mathfrak{n}_-)/F_{\mathfrak{p}}$. Consider the special fibre $\mathcal{J}(\mathfrak{n}_+,\mathfrak{n}_-)_{k_{\mathfrak{p}}}$ over the reside field $k_{\mathfrak{p}}$ of $\mathcal{O}_{F_{\mathfrak{p}}}$. By work of Raynaud [Ra] (see for example section 2 of [Ri] for a summary), we have, as a consequence of the integral semi-stable models $\mathcal{X}(\mathfrak{n}_+,\mathfrak{n}_-)/\mathcal{O}_{F_{\mathfrak{p}}}$ for $X(\mathfrak{n}_+,\mathfrak{n}_-)/F_{\mathfrak{p}}$ as mentioned above, the following: the identity component $\mathcal{J}(\mathfrak{n}_+,\mathfrak{n}_-)^0_{k_{\mathfrak{p}}}$ is a semi-abelian variety over $k_{\mathfrak{p}}$, and we have a short exact sequence:
\[
1 \rightarrow  \mathcal{T} \rightarrow \mathcal{J}(\mathfrak{n}_+,\mathfrak{n}_-)^0_{k_{\mathfrak{p}}} \rightarrow   \mathcal{A}    \rightarrow  1
\]
where $\mathcal{T}$ is a torus over $k_{\mathfrak{p}}$ and $\mathcal{A}$ is an abelian variety over $k_{\mathfrak{p}}$ (in the case where $\mathfrak{p}$ divides $\mathfrak{n}_-$ then in fact $\mathcal{A}$ is trivial, as follows from the integral model $\mathcal{X}(\mathfrak{n}_+,\mathfrak{n}_-)/\mathcal{O}_{F_{\mathfrak{p}}}$ given by Cerednik and Drinfeld). Put
\[
\mathfrak{X}_{\mathfrak{p}} = \mathfrak{X}_{\mathfrak{p}}(\mathfrak{n}_+,\mathfrak{n}_-) := \Hom(\mathcal{T}_{/\overline{k}_{\mathfrak{p}}},\mathbf{G}_{m / \overline{k}_{\mathfrak{p}}})
\]
which is a finite free $\mathbf{Z}$-module equipped with action of $\Gal(\overline{k}_{\mathfrak{p}}/k_{\mathfrak{p}})$ and Hecke operators at primes not dividing $\mathfrak{n}$.

\bigskip

By Grothendieck [G, Theorem 10.4], there is a monodromy pairing $(  \cdot , \cdot   )_{\mathfrak{p}} : \mathfrak{X}_{\mathfrak{p}}(\mathfrak{n}_+,\mathfrak{n}_-)\times \mathfrak{X}_{\mathfrak{p}}(\mathfrak{n}_+,\mathfrak{n}_-) \rightarrow \mathbf{Z}$ (we are using again the auto-duality of Jacobian), which we refer to as the monodromy pairing with respect to $J(\mathfrak{n}_+,\mathfrak{n}_-)/F$ at the prime $\mathfrak{p}$. By Theorem 11.5 of [G], the monodromy pairing induces a short exact sequence:
\[
0 \rightarrow \mathfrak{X}_{\mathfrak{p}}(\mathfrak{n}_+,\mathfrak{n}_-) \rightarrow \Hom_{\mathbf{Z}}( \mathfrak{X}_{\mathfrak{p}}(\mathfrak{n}_+,\mathfrak{n}_-) ,\mathbf{Z}) \rightarrow  \Phi_{\mathfrak{p}}(\mathfrak{n}_+,\mathfrak{n}_-) \rightarrow 0
\]
where
\[
\Phi_{\mathfrak{p}}(\mathfrak{n}_+,\mathfrak{n}_-)  := \mathcal{J}(\mathfrak{n}_+,\mathfrak{n}_-)_{\overline{k}_{\mathfrak{p}}} / \mathcal{J}(\mathfrak{n}_+,\mathfrak{n}_-)^0_{\overline{k}_{\mathfrak{p}}}
\]
is the finite abelian group of the components of $\mathcal{J}(\mathfrak{n}_+,\mathfrak{n}_-)_{\overline{k}_{\mathfrak{p}}}$.

\bigskip

Now as before let $h: J(\mathfrak{n}_+,\mathfrak{n}_-)/F \rightarrow E/F$ be a modular parametrization. Then $E/F$ has multiplicative reduction at the prime $\mathfrak{p}$. Denote by $\mathcal{E}/\mathcal{O}_{F_{\mathfrak{p}}}$ the Neron model of $E/F_{\mathfrak{p}}$ over $\mathcal{O}_{F_{\mathfrak{p}}}$. Put:
\[
\Phi_{\mathfrak{p}}(E/F) : = \mathcal{E}_{\overline{k}_{\mathfrak{p}}} /\mathcal{E}^0_{\overline{k}_{\mathfrak{p}}}
\]
then $\# \Phi_{\mathfrak{p}}(E/F) = \overline{c}_{\mathfrak{p}}(E/F)$. Denote by:
\[
h_{*} : \Phi_{\mathfrak{p}}(\mathfrak{n}_+,\mathfrak{n}_-)  \rightarrow \Phi_{\mathfrak{p}}(E/F)
\]
the map on the component groups induced by $h$. 

\bigskip

The $\mathbf{Z}$-module $\mathfrak{X}_{\mathfrak{p}}(\mathbf{n}_+,\mathfrak{n}_-)$ satisfies a multiplicity one property with respect to the action of the Hecke operators at primes not dividing $\mathfrak{n}$, namely that the $\mathbf{f}$-eigen-submodule of $\mathfrak{X}_{\mathfrak{p}}(\mathfrak{n}_+,\mathfrak{n}_-)$ (with respect to the action of the Hecke operators at primes not dividing $\mathfrak{n}$) is of $\mathbf{Z}$-rank one; in the case $F=\mathbf{Q}$ {\it c.f.} for example the argument in the proof of Proposition 1 in [RT]; the general totally real case is similar. Denote by $\xi_{\mathbf{f}}$ a $\mathbf{Z}$-generator.

\begin{proposition}
Let $\mathfrak{n} = \mathfrak{n}_+ \mathfrak{n}_-$ be an admissible factorization. Suppose that
\[
h: J(\mathfrak{n}_+,\mathfrak{n}_-) /F \rightarrow E/F
\]
is an optimal parametrization, and that $\mathfrak{p}$ is a prime ideal that exactly divides $\mathfrak{n}$. Then with notations as above, we have:
\[
\deg(h) =  \frac{ \overline{c}_{\mathfrak{p}}(E/F) }{(\#  im (h_{*}))^2}  \cdot (\xi_{\mathbf{f}} , \xi_{\mathbf{f}} )_{\mathfrak{p}}    =  \frac{  \# coker(h_{*}) }{\#  im (h_{*})}  \cdot (\xi_{\mathbf{f}} , \xi_{\mathbf{f}} )_{\mathfrak{p}}.
\]
\end{proposition}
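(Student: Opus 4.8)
The plan is to reduce everything to the character group $\mathfrak X_{\mathfrak p} = \mathfrak X_{\mathfrak p}(\mathfrak n_+,\mathfrak n_-)$ and the component groups at $\mathfrak p$, and to run the monodromy-pairing argument of Ribet--Takahashi [RT] (cf. [Tak]) in the totally real generalization of [De]. On the side of $E$, since $E/F$ has multiplicative reduction at $\mathfrak p$, Tate's uniformization gives $\mathfrak X_{\mathfrak p}(E) \cong \mathbf Z\, e$ with monodromy pairing $(e,e)_E = \ord_{\mathfrak p} q(E/F_{\mathfrak p}) = \overline c_{\mathfrak p}(E/F) = \#\Phi_{\mathfrak p}(E/F)$; the monodromy exact sequence for $E$ then reads $0 \to \mathbf Z\, e \xrightarrow{\mu_E} \mathbf Z\, e^{*} \to \mathbf Z/\overline c_{\mathfrak p}(E/F) \to 0$ with $\mu_E(e) = \overline c_{\mathfrak p}(E/F)\, e^{*}$. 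On the side of $J = J(\mathfrak n_+,\mathfrak n_-)$ I use the monodromy sequence $0 \to \mathfrak X_{\mathfrak p} \xrightarrow{\mu} \Hom_{\mathbf Z}(\mathfrak X_{\mathfrak p},\mathbf Z) \to \Phi_{\mathfrak p}(\mathfrak n_+,\mathfrak n_-) \to 0$ recalled above, together with the fact that $\mathbf Z\,\xi_{\mathbf f}$ is a saturated rank-one submodule (being the intersection of $\mathfrak X_{\mathfrak p}$ with a $\mathbf Q$-eigenspace), so that $\xi_{\mathbf f}$ is primitive.

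First I would record the induced maps. The parametrization $h$ gives $\phi := h^{*} : \mathfrak X_{\mathfrak p}(E) \to \mathfrak X_{\mathfrak p}$, and its dual $h^{\vee}$ gives $\psi := (h^{\vee})^{*} : \mathfrak X_{\mathfrak p} \to \mathfrak X_{\mathfrak p}(E)$. Both are equivariant for the Hecke operators away from $\mathfrak n$, so $\phi$ takes values in $\mathbf Z\,\xi_{\mathbf f}$, and I write $\phi(e) = n\,\xi_{\mathbf f}$ with $n = [\,\mathbf Z\,\xi_{\mathbf f} : \phi(\mathfrak X_{\mathfrak p}(E))\,] \ge 1$. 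The defining relation $h \circ h^{\vee} = [\deg h]$ gives, by contravariance, $\psi \circ \phi = [\deg h]$ on $\mathfrak X_{\mathfrak p}(E)$. Then I would invoke Grothendieck's theory [G]: the monodromy pairing is compatible with the autoduality of $J$ and of $E$, so $\phi$ and $\psi$ are adjoint, $(\phi(y),x)_{\mathfrak p} = (y,\psi(x))_E$, and there is a commutative ladder between the two monodromy sequences whose right-hand vertical map is exactly $h_{*} : \Phi_{\mathfrak p}(\mathfrak n_+,\mathfrak n_-) \to \Phi_{\mathfrak p}(E/F)$, induced on cokernels by the cocharacter map $\phi^{\vee} : \Hom_{\mathbf Z}(\mathfrak X_{\mathfrak p},\mathbf Z) \to \mathbf Z\, e^{*}$.

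These inputs produce two numerical identities. Evaluating adjointness on $\phi(e)$ and using $\psi\circ\phi = [\deg h]$ gives $(\phi(e),\phi(e))_{\mathfrak p} = (e,\psi\phi(e))_E = \deg(h)\,\overline c_{\mathfrak p}(E/F)$, hence $n^{2}(\xi_{\mathbf f},\xi_{\mathbf f})_{\mathfrak p} = \deg(h)\,\overline c_{\mathfrak p}(E/F)$. On the other hand, primitivity of $\xi_{\mathbf f}$ gives $\phi^{\vee}(\Hom_{\mathbf Z}(\mathfrak X_{\mathfrak p},\mathbf Z)) = n\mathbf Z\, e^{*}$, and reducing modulo $\mu_E(\mathbf Z e) = \overline c_{\mathfrak p}(E/F)\mathbf Z e^{*}$ yields $\#\operatorname{im}(h_{*}) = \overline c_{\mathfrak p}(E/F)/\gcd(n,\overline c_{\mathfrak p}(E/F))$ and $\#\operatorname{coker}(h_{*}) = \gcd(n,\overline c_{\mathfrak p}(E/F))$. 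Granting the divisibility $n \mid \overline c_{\mathfrak p}(E/F)$, these give $\#\operatorname{coker}(h_{*}) = n$ and $\#\operatorname{im}(h_{*}) = \overline c_{\mathfrak p}(E/F)/n$, and substituting into the first identity yields $\deg(h) = n^{2}(\xi_{\mathbf f},\xi_{\mathbf f})_{\mathfrak p}/\overline c_{\mathfrak p}(E/F) = \big(\overline c_{\mathfrak p}(E/F)/(\#\operatorname{im}(h_{*}))^{2}\big)(\xi_{\mathbf f},\xi_{\mathbf f})_{\mathfrak p}$; the second expression in the statement then follows from $\#\operatorname{im}(h_{*})\cdot\#\operatorname{coker}(h_{*}) = \#\Phi_{\mathfrak p}(E/F) = \overline c_{\mathfrak p}(E/F)$.

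The main obstacle is precisely the divisibility $n \mid \overline c_{\mathfrak p}(E/F)$, equivalently the assertion $\#\operatorname{coker}(h_{*}) = n$, and this is where the optimality of $h$ must be used. I expect to deduce it from the fact that optimality makes $h^{\vee}: E \to J$ a closed immersion, so that after passing to the semistable reduction at $\mathfrak p$ (totally toric when $\mathfrak p \mid \mathfrak n_-$, by Cerednik--Drinfeld, and controlled by Carayol's models when $\mathfrak p \mid \mathfrak n_+$) the induced embedding on toric parts is saturated; translating this through Grothendieck's orthogonality relations links the index $n$ in the character group to the cokernel of $h_{*}$ on component groups, giving the required divisibility. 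The delicate points to verify in the totally real setting are that the monodromy ladder genuinely induces $h_{*}$ and that the optimal embedding reduces to a saturated subtorus; both rest on the admissible semistable integral models and Raynaud's description recalled above, and constitute the content of the extension of [RT] and [Tak] to the Shimura curves over $F$ treated in [De].
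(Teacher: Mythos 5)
Your proposal reconstructs, essentially correctly, the argument \emph{behind} the paper's citation: the paper's own proof of this proposition consists of nothing but references (Theorem 2.3 of [Tak] for $F=\mathbf{Q}$, Theorem 3.2.6 of [De] for general totally real $F$), and what you have written out is in structure the Ribet--Takahashi proof that those references contain. The formal steps all check: the $\mathbf{f}$-eigen-submodule is the kernel of the operators $T_{\mathfrak{q}}-a_{\mathfrak{q}}$ acting on a torsion-free $\mathbf{Z}$-module, hence saturated, so $\xi_{\mathbf{f}}$ is primitive and Hecke equivariance gives $\phi(e)=n\,\xi_{\mathbf{f}}$; adjointness of $\phi=h^{*}$ and $\psi=(h^{\vee})^{*}$ together with $\psi\circ\phi=[\deg h]$ gives $n^{2}(\xi_{\mathbf{f}},\xi_{\mathbf{f}})_{\mathfrak{p}}=\deg(h)\,\overline{c}_{\mathfrak{p}}(E/F)$; the commutative ladder between the two monodromy sequences does induce $h_{*}$ on component groups (Grothendieck's functoriality), and primitivity of $\xi_{\mathbf{f}}$ gives $\#\operatorname{im}(h_{*})=\overline{c}_{\mathfrak{p}}(E/F)/\gcd(n,\overline{c}_{\mathfrak{p}}(E/F))$. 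The equality of the two displayed expressions in the statement is, as you say, just $\#\operatorname{im}(h_{*})\cdot\#\operatorname{coker}(h_{*})=\#\Phi_{\mathfrak{p}}(E/F)=\overline{c}_{\mathfrak{p}}(E/F)$, where one should note that the paper's $\Phi_{\mathfrak{p}}$ is the \emph{geometric} component group, so its order equals $\ord_{\mathfrak{p}}\Delta^{min}(E/F)$ even at non-split multiplicative primes.

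The one genuine gap is the step you flag yourself: the divisibility $n\mid\overline{c}_{\mathfrak{p}}(E/F)$, which you only ``expect to deduce'' and for which your sketched route (analyzing the reduction of the embedded curve and ``orthogonality relations'') is both vaguer and heavier than necessary. It closes in three lines. Optimality gives an exact sequence $0\rightarrow\ker(h)\rightarrow J(\mathfrak{n}_{+},\mathfrak{n}_{-})\rightarrow E\rightarrow 0$ with $\ker(h)$ an abelian variety; dualizing and using the autodualities of $E$ and the Jacobian, $h^{\vee}:E\rightarrow J(\mathfrak{n}_{+},\mathfrak{n}_{-})$ is a closed immersion with abelian-variety quotient. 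All three terms have semistable reduction at $\mathfrak{p}$ (this is the only point where the integral models of Deligne--Rapoport, Buzzard, Carayol, Cerednik--Drinfeld enter), and Grothendieck's theory [G], in the form recalled in section 2 of [Ri], is exact on character groups of toric parts for such sequences; hence $\psi=(h^{\vee})^{*}:\mathfrak{X}_{\mathfrak{p}}\rightarrow\mathfrak{X}_{\mathfrak{p}}(E)\cong\mathbf{Z}e$ is \emph{surjective}. Choosing $x\in\mathfrak{X}_{\mathfrak{p}}$ with $\psi(x)=e$ and applying adjointness, $\overline{c}_{\mathfrak{p}}(E/F)=(e,\psi(x))_{E}=(\phi(e),x)_{\mathfrak{p}}=n\,(\xi_{\mathbf{f}},x)_{\mathfrak{p}}$, so $n\mid\overline{c}_{\mathfrak{p}}(E/F)$, your ``granting'' clause is discharged, $\#\operatorname{coker}(h_{*})=n$, and the stated formula follows. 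This surjectivity-of-$\psi$ argument is precisely how [RT] and [Tak] use optimality, and its verification over totally real $F$ is the content of [De, Theorem 3.2.6]; with that step made explicit, your proof is complete and agrees with the one the paper invokes.
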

\begin{proof}
In the case where $F=\mathbf{Q}$ this is due to Ribet-Takahashi, stated for example as Theorem 2.3 in [Tak]. The argument for a general totally real $F$ is essentially the same, and is given for instance in Theorem 3.2.6 in [De]. 
\end{proof}

\begin{corollary}
Let $\mathfrak{n} = \mathfrak{n}_+ \mathfrak{n}_-$ be an admissible factorization, and $\mathfrak{p}$ be a prime ideal that exactly divides $\mathfrak{n}$. For any modular parametrization:
\[
h: J(\mathfrak{n}_+,\mathfrak{n}_-) /F \rightarrow E/F
\]
we have:
\[
\deg(h) =  \overline{c}_{\mathfrak{p}}(E/F) \cdot  ( \xi_{\mathbf{f}} , \xi_{\mathbf{f}} )_{\mathfrak{p}}    \mod{(\mathbf{Q}^{\times})^{2}}.
\]
\end{corollary}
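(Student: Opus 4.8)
The plan is to bootstrap from the optimal case treated in Proposition 4.4, using the isogeny-invariance relation (4.1) together with the observation that the monodromy term $(\xi_{\mathbf{f}},\xi_{\mathbf{f}})_{\mathfrak{p}}$ is intrinsic to $\mathbf{f}$ and the Shimura curve, hence unchanged when $E/F$ is replaced by an $F$-isogenous curve.

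First I would invoke the existence (recalled above) of an elliptic curve $E^{\prime}/F$, isogenous over $F$ to $E/F$, admitting an optimal parametrization $h^{\prime}: J(\mathfrak{n}_+,\mathfrak{n}_-)/F \rightarrow E^{\prime}/F$. Applying Proposition 4.4 to $h^{\prime}$ gives
\[
\deg(h^{\prime}) = \frac{\overline{c}_{\mathfrak{p}}(E^{\prime}/F)}{(\# im(h^{\prime}_{*}))^2}\cdot (\xi_{\mathbf{f}},\xi_{\mathbf{f}})_{\mathfrak{p}},
\]
and since $(\# im(h^{\prime}_{*}))^2$ is a perfect square this reads
\[
\deg(h^{\prime}) = \overline{c}_{\mathfrak{p}}(E^{\prime}/F)\cdot (\xi_{\mathbf{f}},\xi_{\mathbf{f}})_{\mathfrak{p}} \mod{(\mathbf{Q}^{\times})^{2}}.
\]

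Next I would feed the $F$-isogeny between $E$ and $E^{\prime}$ into relation (4.1), which yields
\[
\deg(h)\cdot \overline{c}_{\mathfrak{p}}(E/F) = \deg(h^{\prime})\cdot \overline{c}_{\mathfrak{p}}(E^{\prime}/F) \mod{(\mathbf{Q}^{\times})^{2}}.
\]
Substituting the previous display and discarding the square factor $(\overline{c}_{\mathfrak{p}}(E^{\prime}/F))^2$ gives
\[
\deg(h)\cdot \overline{c}_{\mathfrak{p}}(E/F) = (\xi_{\mathbf{f}},\xi_{\mathbf{f}})_{\mathfrak{p}} \mod{(\mathbf{Q}^{\times})^{2}}.
\]
Multiplying both sides by $\overline{c}_{\mathfrak{p}}(E/F)$ and absorbing the square $(\overline{c}_{\mathfrak{p}}(E/F))^2$ on the left then produces the asserted identity $\deg(h) = \overline{c}_{\mathfrak{p}}(E/F)\cdot(\xi_{\mathbf{f}},\xi_{\mathbf{f}})_{\mathfrak{p}} \mod{(\mathbf{Q}^{\times})^{2}}$.

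There is no serious obstacle in this argument; it is entirely bookkeeping modulo squares. The single point requiring care is that the same generator $\xi_{\mathbf{f}}$, and hence the same pairing value $(\xi_{\mathbf{f}},\xi_{\mathbf{f}})_{\mathfrak{p}}$, may be used for both $h$ and $h^{\prime}$: this holds because $\xi_{\mathbf{f}}$ generates the (rank one) $\mathbf{f}$-eigen-submodule of the character group $\mathfrak{X}_{\mathfrak{p}}(\mathfrak{n}_+,\mathfrak{n}_-)$ of the Jacobian $J(\mathfrak{n}_+,\mathfrak{n}_-)$, a datum depending only on $\mathbf{f}$ and the Shimura curve and not on the individual member of the isogeny class. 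One should also note that $\mathfrak{p}$ remains a prime of multiplicative reduction for $E^{\prime}/F$, which is immediate since multiplicative reduction is preserved under isogeny, so that $\overline{c}_{\mathfrak{p}}(E^{\prime}/F)$ and the relation (4.1) are both available.
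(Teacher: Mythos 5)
Your proposal is correct and follows essentially the same route as the paper: reduce to an optimal parametrization $h^{\prime}: J(\mathfrak{n}_+,\mathfrak{n}_-)/F \rightarrow E^{\prime}/F$ of an $F$-isogenous curve, apply the optimal-case degree formula (Proposition 4.3 in the paper, which you cite as Proposition 4.4 --- the formula you quote is the right one), discard the square $(\# im(h^{\prime}_{*}))^2$, and transfer back to $E/F$ via the isogeny relation (4.1). Your added remarks --- that $(\xi_{\mathbf{f}},\xi_{\mathbf{f}})_{\mathfrak{p}}$ depends only on $\mathbf{f}$ and the Jacobian and not on the member of the isogeny class, and that multiplicative reduction at $\mathfrak{p}$ persists under isogeny so that (4.1) applies --- are exactly the points the paper's terser proof leaves implicit, and your bookkeeping modulo squares is accurate.
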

\begin{proof}
In the case where $h$ is an optimal parametrization, this follows from Proposition 4.3. In general, there is an $E^{\prime}/F$ that is isogeneous over $F$ to $E/F$, with $E^{\prime}/F$ admitting an optimal parametrization $h^{\prime} : J(\mathfrak{n}_+,\mathfrak{n}_-) \rightarrow E^{\prime}/F$. Then by applying Proposition 4.3 to $h^{\prime}$, and then using (4.1), we obtain Corollary 4.4 in general. 
\end{proof}

\begin{corollary}
Let $\mathfrak{n} =\mathfrak{n}_+ \mathfrak{n}_-$ be an admissible factorization, and $\mathfrak{p}$ be a prime ideal that exactly divides $\mathfrak{n}$. 

\bigskip
If $[F:\mathbf{Q}]$ is odd and $\mathfrak{p}$ divides $\mathfrak{n}_+$, then given a modular parametrization:
\[
h: J(\mathfrak{n},\mathcal{O}_F) /F \rightarrow E/F
\]
we have the following:
\[
\deg(h) =  \big( \prod_{\mathfrak{q} | \mathfrak{n}_- \mathfrak{p}} \overline{c}_{\mathfrak{q}}(E/F)  \big) \cdot  ( \xi_{\mathbf{f}} , \xi_{\mathbf{f}} )_{\mathfrak{p}}    \mod{(\mathbf{Q}^{\times})^{2}}
\]
where the monodromy pairing is with respect to $J(\mathfrak{n}_+,\mathfrak{n}_-)/F$ at the prime $\mathfrak{p}$.

\bigskip
If $[F:\mathbf{Q}]$ is even and $\mathfrak{p}$ divides $\mathfrak{n}_-$, then given a modular parametrization:
\[
h: J(\mathfrak{n}_+,\mathfrak{n}_-)/F \rightarrow E/F
\]
we have the following:
\[
\deg(h) =  \big( \prod_{\mathfrak{q} | \mathfrak{n}_-} \overline{c}_{\mathfrak{q}}(E/F)  \big) \cdot  ( \xi_{\mathbf{f}} , \xi_{\mathbf{f}} )_{\mathfrak{p}}    \mod{(\mathbf{Q}^{\times})^{2}}
\]
where the monodromy pairing is with respect to $J(\mathfrak{n} \mathfrak{p}^{-1}, \mathfrak{p})/F$ at the prime $\mathfrak{p}$.
\end{corollary}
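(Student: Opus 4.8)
The plan is to obtain both formulas formally from Corollary 4.2 and Corollary 4.4, working throughout modulo $(\mathbf{Q}^{\times})^{2}$, where a nonzero rational number equals its own inverse so that products may be freely rearranged. The only point needing care is that Corollary 4.4 produces the monodromy pairing with respect to the \emph{source} Jacobian of the parametrization to which it is applied; hence I will route the degree comparison through exactly the Jacobian named in each case of the statement, using Corollary 4.2 to change the admissible factorization, and then invoke Corollary 4.4 on that Jacobian at $\mathfrak{p}$. In every application $\mathfrak{p}$ exactly divides $\mathfrak{n}$, since $E/F$ has conductor $\mathfrak{n}$ and multiplicative reduction at $\mathfrak{p}$, so Corollary 4.4 is legitimately available.

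For the case $[F:\mathbf{Q}]$ odd with $\mathfrak{p}\mid\mathfrak{n}_+$, I first fix a modular parametrization $h':J(\mathfrak{n}_+,\mathfrak{n}_-)/F\to E/F$. Corollary 4.2 (odd case) applied to $h$ and $h'$ gives
\[
\deg(h)=\Big(\prod_{\mathfrak{q}\mid\mathfrak{n}_-}\overline{c}_{\mathfrak{q}}(E/F)\Big)\cdot\deg(h')\mod{(\mathbf{Q}^{\times})^{2}}.
\]
Since $\mathfrak{p}$ exactly divides $\mathfrak{n}$, Corollary 4.4 applied to $h'$ at $\mathfrak{p}$ yields $\deg(h')=\overline{c}_{\mathfrak{p}}(E/F)\cdot(\xi_{\mathbf{f}},\xi_{\mathbf{f}})_{\mathfrak{p}}\mod{(\mathbf{Q}^{\times})^{2}}$, where the pairing is taken with respect to $J(\mathfrak{n}_+,\mathfrak{n}_-)/F$ at $\mathfrak{p}$, precisely as demanded. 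Combining the two displays, and noting that $\mathfrak{p}\nmid\mathfrak{n}_-$ (as $\mathfrak{n}_+,\mathfrak{n}_-$ are coprime) so that $\overline{c}_{\mathfrak{p}}(E/F)\cdot\prod_{\mathfrak{q}\mid\mathfrak{n}_-}\overline{c}_{\mathfrak{q}}(E/F)=\prod_{\mathfrak{q}\mid\mathfrak{n}_-\mathfrak{p}}\overline{c}_{\mathfrak{q}}(E/F)$, delivers the asserted formula.

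For the case $[F:\mathbf{Q}]$ even with $\mathfrak{p}\mid\mathfrak{n}_-$, I note that $(\mathfrak{n}\mathfrak{p}^{-1},\mathfrak{p})$ is again admissible (its minus part $\mathfrak{p}$ is a square-free product of an odd number of primes), and fix a parametrization $\widetilde{h}:J(\mathfrak{n}\mathfrak{p}^{-1},\mathfrak{p})/F\to E/F$. Corollary 4.2 (even case) applied to $h$ and $\widetilde{h}$ gives
\[
\deg(\widetilde{h})=\Big(\prod_{\mathfrak{q}\mid\mathfrak{n}_-\mathfrak{p}^{-1}}\overline{c}_{\mathfrak{q}}(E/F)\Big)\cdot\deg(h)\mod{(\mathbf{Q}^{\times})^{2}}.
\]
In the factorization $\mathfrak{n}=(\mathfrak{n}\mathfrak{p}^{-1})\cdot\mathfrak{p}$ the prime $\mathfrak{p}$ divides the minus part and exactly divides $\mathfrak{n}$, so Corollary 4.4 applied to $\widetilde{h}$ at $\mathfrak{p}$ yields $\deg(\widetilde{h})=\overline{c}_{\mathfrak{p}}(E/F)\cdot(\xi_{\mathbf{f}},\xi_{\mathbf{f}})_{\mathfrak{p}}\mod{(\mathbf{Q}^{\times})^{2}}$, the pairing now being with respect to $J(\mathfrak{n}\mathfrak{p}^{-1},\mathfrak{p})/F$ at $\mathfrak{p}$, as required. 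Eliminating $\deg(\widetilde{h})$ between the last two displays, and using that modulo squares the factor $\prod_{\mathfrak{q}\mid\mathfrak{n}_-\mathfrak{p}^{-1}}\overline{c}_{\mathfrak{q}}(E/F)$ may be moved to the other side, I obtain
\[
\deg(h)=\overline{c}_{\mathfrak{p}}(E/F)\cdot\Big(\prod_{\mathfrak{q}\mid\mathfrak{n}_-\mathfrak{p}^{-1}}\overline{c}_{\mathfrak{q}}(E/F)\Big)\cdot(\xi_{\mathbf{f}},\xi_{\mathbf{f}})_{\mathfrak{p}}\mod{(\mathbf{Q}^{\times})^{2}}.
\]
As $\mathfrak{p}\mid\mathfrak{n}_-$ and $\mathfrak{n}_-$ is square-free, $\mathfrak{p}\nmid\mathfrak{n}_-\mathfrak{p}^{-1}$, so $\overline{c}_{\mathfrak{p}}(E/F)\cdot\prod_{\mathfrak{q}\mid\mathfrak{n}_-\mathfrak{p}^{-1}}\overline{c}_{\mathfrak{q}}(E/F)=\prod_{\mathfrak{q}\mid\mathfrak{n}_-}\overline{c}_{\mathfrak{q}}(E/F)$, which gives the asserted formula.

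Because the entire argument reduces to the two previous corollaries together with elementary mod-squares arithmetic, I do not expect a genuine obstacle here. The only thing that must be tracked scrupulously is the bookkeeping of \emph{which} Jacobian carries the monodromy pairing—this is dictated by the choice of parametrization ($h'$ in the odd case, $\widetilde{h}$ in the even case) fed into Corollary 4.4—together with the verification that $\mathfrak{p}$ exactly divides $\mathfrak{n}$ in each invocation, which is guaranteed by the multiplicative reduction hypothesis.
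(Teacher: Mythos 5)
Your proposal is correct and takes essentially the same approach as the paper: the paper's own proof of this corollary is the single line ``This follows from Corollary 4.2 and Corollary 4.4,'' and your argument is exactly that deduction with the details made explicit. Your careful bookkeeping --- routing each case through the parametrization of the Jacobian that carries the monodromy pairing ($h'$ from $J(\mathfrak{n}_+,\mathfrak{n}_-)$ in the odd case, $\widetilde{h}$ from $J(\mathfrak{n}\mathfrak{p}^{-1},\mathfrak{p})$ in the even case), checking that $(\mathfrak{n}\mathfrak{p}^{-1},\mathfrak{p})$ is admissible, that $\mathfrak{p}$ exactly divides $\mathfrak{n}$, and that inversion is harmless modulo $(\mathbf{Q}^{\times})^{2}$ --- is precisely what the paper leaves implicit.
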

\begin{proof}
This follows from Corollary 4.2 and Corollary 4.4.
\end{proof}

\bigskip
We can now begin the proof of Lemma 3.4. We first show (3.18). Thus for the moment $F=\mathbf{Q}$. As in the previous sections $E/\mathbf{Q}$ is an elliptic curve corresponding to weight two cuspidal eigenform $f=f_E$ of level $N$, with $N$ being the conductor of $E/\mathbf{Q}$, such that $N= M \mathcal{Q}$, where $M$ and $\mathcal{Q}$ are relatively prime, and $\mathcal{Q}$ is a square-free product of an odd number of distinct primes. Pick a prime $p$ that divides $\mathcal{Q}$. Then put $N_+ :=M p$ and $N_- :=\mathcal{Q}/p$. The factorization $N=N_+ N_-$ is then an admissible factorization.  

\bigskip

We now apply Corollary 4.5 (for the case $F=\mathbf{Q}$ so in particular $[F:\mathbf{Q}]=1$ is odd), with $N_+=M p$ and $N_-=\mathcal{Q}/p$ (note that $p$ exactly divides $N_+$). Then for a modular parametrization of $E/\mathbf{Q}$ by $X(N,1)/\mathbf{Q} =X_0(N)/\mathbf{Q}$, or equivalently by the Jacobian $J_0(N)/\mathbf{Q}$:
\[
h: J_0(N)/\mathbf{Q} \rightarrow E/\mathbf{Q}
\]
we have
\begin{eqnarray}
\deg(h) = \big( \prod_{q | \mathcal{Q}} \overline{c}_q(E/\mathbf{Q}) \big) \cdot (\xi_{f_E},\xi_{f_E})_p   \mod{(\mathbf{Q}^{\times})^{2}}.
\end{eqnarray}
where the monodromy pairing is with respect to $J(M p,\mathcal{Q}/p)/\mathbf{Q}$ at the prime $p$.

\bigskip

We now explicate the monodromy pairing. Firstly, as a consequence of the integral model of Deligne-Rapoport [DR] and K. Buzzard [Bu], the work of Raynaud [Ra] gives a canonical identification between $\mathfrak{X}_p(M p,\mathcal{Q}/p)$ and the $\mathbf{Z}$-module of degree zero divisors on the supersingular points on $\mathcal{X}(M p,\mathcal{Q}/p)_{\overline{\mathbf{F}}_p}$ (namely the non-smooth points on $\mathcal{X}(M p,\mathcal{Q}/p)_{\overline{\mathbf{F}}_p}$), and the set of supersingular points on $\mathcal{X}(M p,\mathcal{Q}/p)_{\overline{\mathbf{F}}_p}$ can be identified with the finite automorphic quotient:
\[
B^{\times} \backslash \widehat{B}^{\times} / \widehat{R}^{\times}
\]
where $B$ is the definite quaternion algebra over $\mathbf{Q}$ that ramifies exactly at primes dividing $\mathcal{Q}$ and the archimedean place, and $R$ is an Eichler order of $B$ of level $M$, and this identification respect the action of the Hecke operators at primes not dividing $N=M \mathcal{Q}$. See for example Proposition 3.1 and 3.3 of [Ri]; here in the case when $\mathcal{Q}/p \neq 1$, the arguments in [Ri] apply verbatim to $\mathcal{X}(M p ,\mathcal{Q}/p)_{\mathbf{F}_p}$, when the results of [Bu] is substituted in place of [DR] in the discussion in {\it loc. cit.}
\bigskip

Fix choices of representatives for elements $ b \in B^{\times} \backslash \widehat{B}^{\times} / \widehat{R}^{\times}$ in $\widehat{B}^{\times}$, and we still denote the representative as $b$. Put:
\[
R_b = B \cap b \widehat{R} b^{-1}
\]
then $R_b$ is again Eichler order of $B$ of level $M$. Define:
\[
e_b : = \# (R_b^{\times}/ \{\pm1 \}).
\]
(thus $e_b$ is independent of the choice of representative of $b  \in B^{\times} \backslash \widehat{B}^{\times} / \widehat{R}^{\times}$ in $\widehat{B}^{\times}$). Then under the previous identification of $\mathfrak{X}_p(Mp,\mathcal{Q}/p)$ with the $\mathbf{Z}$-module of degree zero divisors on the finite set $B^{\times} \backslash \widehat{B}^{\times} / \widehat{R}^{\times}$, the monodromy pairing $( \cdot , \cdot )_p$ on $\mathfrak{X}_p(Mp, \mathcal{Q}/p)$ is given by (the restriction of) the $\mathbf{Z}$-bilinear symmetric paring on the $\mathbf{Z}$-module of divisors on $B^{\times} \backslash \widehat{B}^{\times} / \widehat{R}^{\times}$, determined by the condition: $([b],[b^{\prime}]    )_p = e_b$ if $b=b^{\prime}$, and is equal to $0$ otherwise, {\it c.f.} again the discussions in section 2 and p. 448 of section 3 in [Ri].  

\bigskip

Now let $\phi_E$ be scalar-valued automorphic eigenform (with trivial central character) with respect to $B^{\times}$ that corresponds to $f_E$ under the Jacquet-Langlands correspondence. Thus $\phi_E$ is a function on $B^{\times} \backslash \widehat{B}^{\times} / \widehat{R}^{\times}$, and we chose $\phi_E$ so that it takes values in $\mathbf{Q}$. Recall that the Petersson inner product on $\mathbf{C}$-valued functions on $B^{\times} \backslash \widehat{B}^{\times} / \widehat{R}^{\times}$ (with trivial central character) is defined by the following: for $\phi_1,\phi_2$ two such functions,
\[
\langle \phi_1,\phi_2 \rangle : = \sum_{b \in B^{\times} \backslash \widehat{B}^{\times} / \widehat{R}^{\times}} \frac{1}{e_b} \phi_1(b) \cdot  \overline{\phi_2(b)}.
\]
The action of the Hecke operators at primes not dividing $N$ is self-adjoint with respect to the Petersson inner product.

\bigskip

Since $\phi_E$ corresponds to the cusp form $f_E$, we have that $\phi_E$ is orthogonal to the constant functions on $B^{\times} \backslash \widehat{B}^{\times} / \widehat{R}^{\times}$. In particular:
\[
0 = \langle \phi_E,1 \rangle = \sum_{b \in B^{\times} \backslash \widehat{B}^{\times} / \widehat{R}^{\times}} \frac{1}{e_b} \phi_E(b)
\]

Now let $t$ be a non-zero integer such that 
\[
t \cdot \frac{1}{e_b} \phi_E(b) \in \mathbf{Z}
\]
for all $b \in B^{\times} \backslash \widehat{B}^{\times} / \widehat{R}^{\times}$. Then 
\[
\zeta_{\phi_E} := \sum_{b \in B^{\times} \backslash \widehat{B}^{\times} / \widehat{R}^{\times}}  ( t \cdot \frac{1}{e_b} \phi_E(b)) \cdot [b]
\]
is a degree zero divisor on $B^{\times} \backslash \widehat{B}^{\times} / \widehat{R}^{\times}$, hence a non-zero element of $\mathfrak{X}_p(Mp, \mathcal{Q}/p)$. Note that $(\zeta_{\phi_E},\zeta_{\phi_E})_p = t^2 \langle \phi_E,\phi_E \rangle$. In addition since $\phi_E$ corresponds to $f_E$ under the Jacquet-Langlands correspondence, one has that $\zeta_{\phi_E}$ belongs to the $f_E$-eigen-submodule of $\mathfrak{X}_p(Mp, \mathcal{Q}/p)$, and hence 
\[
\zeta_{\phi_E} = s \cdot \xi_{f_E}
\] 
for some non-zero integer $s$. In particular $(\zeta_{\phi_E},\zeta_{\phi_E})_p=s^2 (\xi_{f_E},\xi_{f_E})_p$.

\bigskip

Summarizing, we have 
\[
(\xi_{f_E},\xi_{f_E})_p = \langle \phi_E , \phi_E \rangle  \mod{(\mathbf{Q}^{\times})^{2}}.
\]
and so (3.18) follows on combining this with (4.2).

\bigskip

To complete the proof of Lemma 3.4, it remains to establish (3.19). For the moment we again consider the more general case. Thus $F$ is a totally real field, but with $[F:\mathbf{Q}]$ even, and $\mathfrak{n}$ an ideal of $\mathcal{O}_F$ with admissible factorization $\mathfrak{n} = \mathfrak{n}_+ \mathfrak{n}_-$ (recall that in this case, $\mathfrak{n}_-$ is a square-free product of an odd number distinct prime ideals of $\mathcal{O}_F$). 

\bigskip

Let $E/F$ be an elliptic curve of conductor $\mathfrak{n}$ that is associated to parallel weight two cuspidal Hilbert eigenform $\mathbf{f}=\mathbf{f}_E$ over $F$ of level $\mathfrak{n}$, and 
\[
h: J(\mathfrak{n}_+,\mathfrak{n}_- ) \rightarrow E/F
\]
be a modular parametrization. Denote by $B_F$ the quaternion algebra over $F$ that ramifies exactly at the archimedean places of $F$ (in particular $B_F$ splits at all the finite places of $F$), and let $R_F$ be an Eichler order of $B_F$ of level $\mathfrak{n}$. Now let $\Phi_E$ be scalar-valued automorphic eigenform (with trivial central character) with respect to $B_F^{\times}$ that corresponds to $\mathbf{f}_E$ under the Jacquet-Langlands correspondence. Thus $\Phi_E$ is a function on the finite automorphic quotient $B_F^{\times} \backslash \widehat{B}_F^{\times} / \widehat{R}_F^{\times}$. We choose $\Phi_E$ so that it takes values in $\mathbf{Q}$.

\bigskip

Now the Petersson inner product on $\mathbf{C}$-valued functions on $B_F^{\times} \backslash \widehat{B}_F^{\times} / \widehat{R}_F^{\times}$ (with trivial central character) is defined by the following: for $\Phi_1,\Phi_2$ two such functions,
\[
\langle \Phi_1,\Phi_2 \rangle : = \sum_{b \in B_F^{\times} \backslash \widehat{B}_F^{\times} / \widehat{R}_F^{\times}} \frac{1}{e_b} \Phi_1(b) \cdot  \overline{\Phi_2(b)}.
\]
Here $e_b$ is defined as follows. Fix choices of representatives for elements $ b \in B^{\times}_F \backslash \widehat{B}^{\times}_F / \widehat{R}^{\times}_F$ in $\widehat{B}^{\times}_F$, and we still denote the representative as $b$. Put:
\[
R_{F,b} = B_F \cap b \widehat{R}_F b^{-1}
\]
then $R_{F,b}$ is again Eichler order of $B_F$ of level $\mathfrak{n}$. We have:
\[
e_b : = \# (  R_{F,b}^{\times} / \mathcal{O}_F^{\times} )
\]
($e_b$ is independent of the choice of representation of $b \in B^{\times}_F \backslash \widehat{B}^{\times}_F / \widehat{R}^{\times}_F$ in $\widehat{B}^{\times}_F$). The action of the Hecke operators at primes not dividing $\mathfrak{n}$ is self-adjoint with respect to the Petersson inner product.

\bigskip

Now let $\mathfrak{p}$ be a prime dividing $\mathfrak{n}_-$. Then applying Corollary 4.5 to $h: J(\mathfrak{n}_+,\mathfrak{n}_- ) \rightarrow E/F$, we have:
\[
\deg(h) =  \big( \prod_{\mathfrak{q} | \mathfrak{n}_- } \overline{c}_{\mathfrak{q}}(E/F)  \big) \cdot  ( \xi_{\mathbf{f}_E} , \xi_{\mathbf{f}_E} )_{\mathfrak{p}}    \mod{(\mathbf{Q}^{\times})^{2}}
\]
where the monodromy pairing is with respect to $J(\mathfrak{n} \mathfrak{p}^{-1}, \mathfrak{p})/F$ at the prime $\mathfrak{p}$. We now explicate the monodromy pairing and relate it to the Petersson inner product $\langle \Phi_E,\Phi_E \rangle$.

\bigskip

As before $R_F$ is Eichler order of $B_F$ with level $\mathfrak{n}$; in addition let $\underline{R}_F$ be an Eichler order of $B_F$ of level $\mathfrak{n} \mathfrak{p}^{-1}$, such that $R_F \subset \underline{R}_F$. Now as a consequence of the integral models of Cerednik [Ce] and Drinfeld [Dr], the work of Raynaud [Ra] gives an injection of $\mathfrak{X}_{\mathfrak{p}}(\mathfrak{n} \mathfrak{p}^{-1},\mathfrak{p})$ into the $\mathbf{Z}$-module of degree zero divisors on the non-smooth points of $\mathcal{X}(\mathfrak{n} \mathfrak{p}^{-1},\mathfrak{p})_{\overline{k}_{\mathfrak{p}}}$, {\it c.f} section 2 of [Ri]. In addition, one has a description of the special fibre $\mathcal{X}(\mathfrak{n} \mathfrak{p}^{-1},\mathfrak{p})_{\overline{k}_{\mathfrak{p}}}$; equivalently a description of the the dual graph of $\mathcal{X}(\mathfrak{n} \mathfrak{p}^{-1},\mathfrak{p})_{\overline{k}_{\mathfrak{p}}}$, where the set of non-smooth points of $\mathcal{X}(\mathfrak{n} \mathfrak{p}^{-1},\mathfrak{p})_{\overline{k}_{\mathfrak{p}}}$ corresponds to the set of un-oriented edges, and the set of irreducible components of $\mathcal{X}(\mathfrak{n} \mathfrak{p}^{-1},\mathfrak{p})_{\overline{k}_{\mathfrak{p}}}$ (which are isomorphic to $\mathbf{P}^1_{\overline{k}_{\mathfrak{p}}}$) corresponds to the set of vertices. It is a bipartite graph. One has that the set of un-oriented edges of the dual graph can be identified with:
\[
B_F^{\times} \backslash \widehat{B}_F^{\times} / \widehat{R}_F^{\times} F^{\times}_{\mathfrak{p}}
\]
and this identification also gives an orientation to the dual graph. While the set of vertices of the dual graph can be identified with:
\[
(B_F^{\times} \backslash \widehat{B}_F^{\times} / \underline{\widehat{R}}_F^{\times} F^{\times}_{\mathfrak{p}}) \times \mathbf{Z}/2 \mathbf{Z}
\]
with the two copies of $B_F^{\times} \backslash \widehat{B}_F^{\times} / \underline{\widehat{R}}_F^{\times} F^{\times}_{\mathfrak{p}}$ in $(B_F^{\times} \backslash \widehat{B}_F^{\times} / \underline{\widehat{R}}_F^{\times} F^{\times}_{\mathfrak{p}}) \times \mathbf{Z}/2 \mathbf{Z}$ forming the partition of the set of vertices of the dual graph into two disjoint subsets of vertices, and which gives the dual graph the structure of a bipartite graph that respects the orientation. These identifications respect the action of Hecke operators at primes not dividing $\mathfrak{n}$

\bigskip
Under these identifications, the image of $\mathfrak{X}_{\mathfrak{p}}(\mathfrak{n} \mathfrak{p}^{-1},\mathfrak{p})$ in the $\mathbf{Z}$-module of degree zero divisors on $B_F^{\times} \backslash \widehat{B}_F^{\times} / \widehat{R}_F^{\times} F^{\times}_{\mathfrak{p}}$ is equal to the kernel of the degeneracy maps from level $\mathfrak{n}$ to level $\mathfrak{n} \mathfrak{p}^{-1}$, namely equal to the kernel of the degeneracy maps from the $\mathbf{Z}$-module of degree zero divisors on $B_F^{\times} \backslash \widehat{B}_F^{\times} / \widehat{R}_F^{\times} F^{\times}_{\mathfrak{p}}$, to the $\mathbf{Z}$-module of (degree zero) divisors on $B_F^{\times} \backslash \widehat{B}_F^{\times} /\underline{ \widehat{R}}_F^{\times} F^{\times}_{\mathfrak{p}}$; in terms of the dual graph of $\mathcal{X}(\mathfrak{n} \mathfrak{p}^{-1},\mathfrak{p})_{\overline{k}_{\mathfrak{p}}}$ and the choice of orientation as above, the degeneracy maps correspond to the two maps: sending an oriented edge to its source, and sending an oriented edge to its target; {\it c.f.} section 2 of [Ri], section 1.4 - 1.5 of [N]. 

\bigskip

Fix choices of representatives for elements $ c \in B^{\times}_F \backslash \widehat{B}^{\times}_F / \widehat{R}^{\times}_F F^{\times}_{\mathfrak{p}}$ in $\widehat{B}^{\times}_F$, and we still denote the representative as $c$. Similar to before put:
\[
R_{F,c} = B_F \cap c \widehat{R}_F c^{-1}
\]
then $R_{F,c}$ is again Eichler order of $B_F$ of level $\mathfrak{n}$. Define:
\[
e^{(\mathfrak{p})}_c : = \# ((R_{F,c}^{(\mathfrak{p})})_+^{\times}/   (\mathcal{O}_F^{(\mathfrak{p})})^{\times} ).
\]
Here in the definition of $e_c^{(\mathfrak{p})}$, the terms on the right are defined as follows. As for $\mathcal{O}_F^{(\mathfrak{p})}$ it is the subring of $F$ consisting of elements that are integral at all finite places of $F$ other than $\mathfrak{p}$. Put $R_{F,c}^{(\mathfrak{p})} = R_{F,c} \otimes_{\mathcal{O}_F} \mathcal{O}_F^{(\mathfrak{p})}$, and $(R_{F,c}^{(\mathfrak{p})})_+^{\times}$ is the group of elements $\gamma \in (R_{F,c}^{(\mathfrak{p})})^{\times}$ such that the reduced norm $N_{B_F/F}(\gamma) \in ( \mathcal{O}_F^{(\mathfrak{p})})^{\times}$ satisfies $\ord_{\mathfrak{p}}N_{B_F/F}(\gamma) \equiv 0 \mod{2}$ ($e_c^{(\mathfrak{p})}$ is independent of the choice of representative of $c \in B_F^{\times} \backslash \widehat{B}_F^{\times} / \widehat{R}_F^{\times} F^{\times}_{\mathfrak{p}}$ in $\widehat{B}_F^{\times}$).

\bigskip

Then under the previous identification of $\mathfrak{X}_{\mathfrak{p}}(\mathfrak{n} \mathfrak{p}^{-1},\mathfrak{p})$ with a $\mathbf{Z}$-submodule of the $\mathbf{Z}$-module of degree zero divisors on the finite set $B_F^{\times} \backslash \widehat{B}_F^{\times} / \widehat{R}_F^{\times} F^{\times}_{\mathfrak{p}}$, the monodromy pairing $( \cdot , \cdot )_{\mathfrak{p}}$ on $\mathfrak{X}_{\mathfrak{p}}(\mathfrak{n} \mathfrak{p}^{-1},\mathfrak{p})$ is given by (the restriction of) the $\mathbf{Z}$-bilinear symmetric paring on the $\mathbf{Z}$-module of divisors on $B_F^{\times} \backslash \widehat{B}_F^{\times} / \widehat{R}_F^{\times} F^{\times}_{\mathfrak{p}}$, determined by the condition: $([c],[c^{\prime}]    )_{\mathfrak{p}} = e^{(\mathfrak{p})}_c$ if $c=c^{\prime}$, and is equal to $0$ otherwise, {\it c.f.} again section 2 of [Ri], and Proposition 3.2 of [Ku], section 1.5.3 of [N]. 

\bigskip

To compare the monodromy pairing with the Petersson inner product, we assume in addition that $\mathfrak{p}$ is inert over $\mathbf{Q}$; thus $\mathfrak{p} = p \mathcal{O}_F$ where $p$ is a prime number. Then with this $\mathfrak{p}$, we have 
\[
B_F^{\times} \backslash \widehat{B}_F^{\times} / \widehat{R}_F^{\times} = B_F^{\times} \backslash \widehat{B}_F^{\times} / \widehat{R}_F^{\times} F^{\times}_{\mathfrak{p}}
\]
(and so the set of representatives of $B_F^{\times} \backslash \widehat{B}_F^{\times} / \widehat{R}_F^{\times} $ and $B_F^{\times} \backslash \widehat{B}_F^{\times} / \widehat{R}_F^{\times} F^{\times}_{\mathfrak{p}}$ in $\widehat{B}_F^{\times}$ would be taken to be the same set). We also have an isomorphism induced by inclusion: 
\[
R_{F,b}^{\times} / \mathcal{O}_F^{\times} \cong (R_{F,b}^{(\mathfrak{p})})_+^{\times}/   (\mathcal{O}_F^{(\mathfrak{p})})^{\times} 
\]
and hence $e_b = e_b^{(\mathfrak{p})}$ for all $b \in B_F^{\times} \backslash \widehat{B}_F^{\times} / \widehat{R}_F^{\times}$.

\bigskip

Now as before $\Phi_E$ is scalar-valued automorphic eigenform (with trivial central character) with respect to $B_F^{\times}$ that corresponds to $\mathbf{f}_E$ under the Jacquet-Langlands correspondence, chosen so that it takes values in $\mathbf{Q}$. Let $t$ be a non-zero integer such that 
\[
t \cdot \frac{1}{e_b} \Phi_E(b) \in \mathbf{Z}
\]
for all $b \in B_F^{\times} \backslash \widehat{B}_F^{\times} / \widehat{R}_F^{\times}$. Then again:
\[
\zeta_{\Phi_E} := \sum_{b \in B_F^{\times} \backslash \widehat{B}_F^{\times} / \widehat{R}_F^{\times}}  ( t \cdot \frac{1}{e_b} \Phi_E(b)) \cdot [b]
\]
is a degree zero divisor on $B_F^{\times} \backslash \widehat{B}_F^{\times} / \widehat{R}_F^{\times}$. 

\bigskip

Now $E/F$ has conductor equal to $\mathfrak{n}$, and so $\mathbf{f}_E$ and hence $\Phi_E$ is new at $\mathfrak{p}$. Then $\Phi_E$ is in the kernel of the degeneracy maps from level $\mathfrak{n}$ to level $\mathfrak{n} \mathfrak{p}^{-1}$, {\it c.f.} for example section 1.5.7 of [N], and hence the same is true for $\zeta_{\Phi_E}$. Then $\zeta_{\Phi_E}$ is a non-zero element of $\mathfrak{X}_{\mathfrak{p}}(\mathfrak{n} \mathfrak{p}^{-1}, \mathfrak{p})$. One has $(\zeta_{\Phi_E},\zeta_{\Phi_E})_{\mathfrak{p}} = t^2 \langle \Phi_E,\Phi_E \rangle$. In addition since $\Phi_E$ corresponds to $\mathbf{f}_E$ under the Jacquet-Langlands correspondence, one has that $\zeta_{\Phi_E}$ belongs to the $\mathbf{f}_E$-eigen-submodule of $\mathfrak{X}_{\mathfrak{p}}(\mathfrak{n} \mathfrak{p}^{-1}, \mathfrak{p})$, and hence 
\[
\zeta_{\Phi_E} = s \cdot \xi_{\mathbf{f}_E}
\] 
for some non-zero integer $s$. In particular $(\zeta_{\Phi_E},\zeta_{\Phi_E})_{\mathfrak{p}}=s^2 (\xi_{\mathbf{f}_E},\xi_{\mathbf{f}_E})_{\mathfrak{p}}$. Consequently, we have
\[
(\xi_{\mathbf{f}_E},\xi_{\mathbf{f}_E})_{\mathfrak{p}} =  \langle \Phi_E,\Phi_E \rangle   \mod{(\mathbf{Q}^{\times})^{2}}.
\]

\bigskip

Thus to summarize, we have:
\begin{proposition}
In the case where $[F:\mathbf{Q}]$ is even, suppose that some prime ideal $\mathfrak{p}$ dividing $\mathfrak{n}_-$ is inert over $\mathbf{Q}$. Then we have, with notations as above:
\[
\deg(h) =  \big( \prod_{\mathfrak{q} | \mathfrak{n}_- } \overline{c}_{\mathfrak{q}}(E/F)  \big) \cdot \langle \Phi_E,\Phi_E \rangle   \mod{(\mathbf{Q}^{\times})^{2}}
\]
\end{proposition}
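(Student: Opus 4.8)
The plan is to reduce the stated identity to a comparison between the monodromy pairing on the character group and the Petersson inner product, and then to make that comparison explicit via the Cerednik--Drinfeld uniformization. Since $[F:\mathbf{Q}]$ is even and $\mathfrak{p}$ divides $\mathfrak{n}_-$, Corollary 4.5 applies and gives
\[
\deg(h) = \big( \prod_{\mathfrak{q} | \mathfrak{n}_-} \overline{c}_{\mathfrak{q}}(E/F) \big) \cdot (\xi_{\mathbf{f}_E},\xi_{\mathbf{f}_E})_{\mathfrak{p}} \mod{(\mathbf{Q}^{\times})^{2}},
\]
with the monodromy pairing taken with respect to $J(\mathfrak{n}\mathfrak{p}^{-1},\mathfrak{p})/F$ at $\mathfrak{p}$. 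Thus the whole proposition reduces to the single congruence $(\xi_{\mathbf{f}_E},\xi_{\mathbf{f}_E})_{\mathfrak{p}} \equiv \langle \Phi_E,\Phi_E \rangle \bmod (\mathbf{Q}^{\times})^2$.

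To establish this, I would appeal to the integral models of Cerednik and Drinfeld for $X(\mathfrak{n}\mathfrak{p}^{-1},\mathfrak{p})$ at the prime $\mathfrak{p}$, together with Raynaud's theory, which identifies $\mathfrak{X}_{\mathfrak{p}}(\mathfrak{n}\mathfrak{p}^{-1},\mathfrak{p})$ with the kernel of the two degeneracy maps inside the $\mathbf{Z}$-module of degree-zero divisors on the edge set $B_F^{\times} \backslash \widehat{B}_F^{\times} / \widehat{R}_F^{\times} F^{\times}_{\mathfrak{p}}$ of the (bipartite) dual graph of the special fibre, and which realises the monodromy pairing in the diagonal form $([c],[c'])_{\mathfrak{p}} = e_c^{(\mathfrak{p})}$ for $c=c'$ and $0$ otherwise, the weight $e_c^{(\mathfrak{p})}$ being the order of the norm-parity-normalized unit quotient $(R_{F,c}^{(\mathfrak{p})})_+^{\times}/(\mathcal{O}_F^{(\mathfrak{p})})^{\times}$.

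The hypothesis that $\mathfrak{p} = p\mathcal{O}_F$ is inert over $\mathbf{Q}$ is what makes the comparison work: it forces $F^{\times}_{\mathfrak{p}}$ to be already absorbed into $\widehat{R}_F^{\times}$ modulo the double cosets, so that the edge set collapses to the automorphic quotient $B_F^{\times} \backslash \widehat{B}_F^{\times} / \widehat{R}_F^{\times}$ on which $\Phi_E$ lives, and it identifies the weights $e_b = e_b^{(\mathfrak{p})}$. I would then clear denominators by choosing a nonzero integer $t$ with $t\, e_b^{-1}\Phi_E(b) \in \mathbf{Z}$ for all $b$ and forming $\zeta_{\Phi_E} = \sum_b (t\, e_b^{-1}\Phi_E(b))[b]$, which has degree zero since $\Phi_E$ is orthogonal to the constants. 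Because $E/F$ has conductor exactly $\mathfrak{n}$, the eigenform $\Phi_E$ is new at $\mathfrak{p}$ and hence lies in the kernel of the degeneracy maps, so $\zeta_{\Phi_E} \in \mathfrak{X}_{\mathfrak{p}}(\mathfrak{n}\mathfrak{p}^{-1},\mathfrak{p})$; being a nonzero vector in the rank-one $\mathbf{f}_E$-eigen-submodule it equals $s\,\xi_{\mathbf{f}_E}$ for some nonzero integer $s$. Evaluating the self-pairing in two ways, $(\zeta_{\Phi_E},\zeta_{\Phi_E})_{\mathfrak{p}} = t^2\langle\Phi_E,\Phi_E\rangle = s^2(\xi_{\mathbf{f}_E},\xi_{\mathbf{f}_E})_{\mathfrak{p}}$, gives the required congruence modulo squares, which combined with the displayed consequence of Corollary 4.5 proves the proposition.

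The main obstacle is the geometric input at a prime $\mathfrak{p}$ dividing $\mathfrak{n}_-$. In contrast to the proof of (3.18) --- where $F=\mathbf{Q}$, $\mathfrak{p}$ divides $\mathfrak{n}_+$, the Deligne--Rapoport/Buzzard models apply, and the supersingular points already fill out the full automorphic set so that the character group is the whole degree-zero divisor module --- here $X(\mathfrak{n}\mathfrak{p}^{-1},\mathfrak{p})$ is a genuine Shimura curve whose Cerednik--Drinfeld special fibre is a bipartite graph, and the character group appears only as the degeneracy kernel. Pinning down that the monodromy pairing is exactly diagonal with the sign-normalized weights $e_c^{(\mathfrak{p})}$, and that the inertness of $\mathfrak{p}$ collapses these back to the plain weights $e_b$ governing the Petersson product, is the delicate step; the remainder is eigenform bookkeeping.
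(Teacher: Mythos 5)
Your proposal is correct and follows essentially the same route as the paper's own proof: reduction via Corollary 4.5 to the congruence $(\xi_{\mathbf{f}_E},\xi_{\mathbf{f}_E})_{\mathfrak{p}} \equiv \langle \Phi_E,\Phi_E\rangle \bmod (\mathbf{Q}^{\times})^2$, then the Cerednik--Drinfeld/Raynaud identification of $\mathfrak{X}_{\mathfrak{p}}(\mathfrak{n}\mathfrak{p}^{-1},\mathfrak{p})$ with the degeneracy kernel carrying the diagonal monodromy pairing with weights $e_c^{(\mathfrak{p})}$, the use of inertness of $\mathfrak{p}$ to collapse $B_F^{\times}\backslash\widehat{B}_F^{\times}/\widehat{R}_F^{\times}F_{\mathfrak{p}}^{\times}$ to $B_F^{\times}\backslash\widehat{B}_F^{\times}/\widehat{R}_F^{\times}$ with $e_b = e_b^{(\mathfrak{p})}$, and finally the divisor $\zeta_{\Phi_E} = s\,\xi_{\mathbf{f}_E}$ (newness at $\mathfrak{p}$ placing it in the degeneracy kernel, multiplicity one pinning it down) whose self-pairing is computed in two ways. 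Your identification of the delicate step --- the explicit diagonal form of the monodromy pairing on the bipartite dual graph and its collapse under inertness --- matches exactly where the paper leans on [Ri], [Ku], and [N].
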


We can now complete the proof of Lemma 3.4. Namely that $E/\mathbf{Q}$ is an elliptic curve whose conductor is equal to $N = M \mathcal{Q}$, with $M,\mathcal{Q}$ relatively prime, and such that $\mathcal{Q}$ is a square-free product of an odd number of distinct prime numbers. We take $F/\mathbf{Q}$ to be real quadratic field as in the setting of section 3; thus all primes dividing $M$ splits in $F$, while all primes dividing $\mathcal{Q}$ are inert in $F$. Consider the base change $E/F$, which is an elliptic curve of conductor equal to $\mathfrak{n} = N \mathcal{O}_F$. Take $\mathfrak{n}_+ = M \mathcal{O}_F,\mathfrak{n}_- =\mathcal{Q} \mathcal{O}_F$. Then $\mathfrak{n} = \mathfrak{n}_+ \mathfrak{n}_-$ is an admissible factorization, and the prime ideals dividing $\mathfrak{n}_-$ lie over bijectively with the primes dividing $\mathcal{Q}$, and are inert over the corresponding prime numbers dividing $\mathcal{Q}$.

\bigskip
The elliptic curve $E/F$ is associated to the parallel weight two cuspidal Hilbert eigenform $\mathbf{f} =\mathbf{f}_E$ of level $\mathfrak{n}$, given by the base change of $f=f_E$ from $\GL_{2/\mathbf{Q}}$ to $\GL_{2/F}$. Proposition 4.6 is applicable in this setting, and we see that the proof of (3.19) is completed, simply on noting that, since $E/F$ is the base change of $E/\mathbf{Q}$, we have for each prime $q | \mathcal{Q}$:
\[
\overline{c}_q(E/\mathbf{Q}) = \overline{c}_{\mathfrak{q}} (E/F), \,\ \mbox{ where } \mathfrak{q} = q \mathcal{O}_F.
\]

\end{document}